\newtheorem{theorem}{Theorem}[section]
\newtheorem{definition}[theorem]{Definition}
\newtheorem{remark}[theorem]{Remark}
\newtheorem{proposition}[theorem]{Proposition}
\newtheorem{lemma}[theorem]{Lemma}
\newtheorem{example}[theorem]{Example}
\Crefname{theorem}{Theorem}{Theorems}
\Crefname{definition}{Definition}{Definitions}
\Crefname{remark}{Remark}{Remarks}
\Crefname{proposition}{Proposition}{Propositions}
\Crefname{lemma}{Lemma}{Lemmas}
\Crefname{corollary}{Corollary}{Corollaries}
\Crefname{example}{Example}{Examples}
\crefname{equation}{}{}
\crefname{figure}{Figure}{Figures}
\title{\LARGE{Formation Shape Control Based on Distance Measurements Using Lie Bracket Approximations}\footnote{This work was funded by the DAAD with funds of the German Federal  Ministry  of  Education  and  Research  (BMBF),  by  the DAAD-Go8  German-Australian  Collaboration  Project,  and  by  the Australian  Research  Council  under  grant  DP160104500.}}
\author{
  Raik Suttner\thanks{R. Suttner is with the Institute of Mathematics, University of Wuerzburg, Wuerzburg, Germany (\texttt{raik.suttner@mathematik.uni-wuerzburg.de}).}
  \and 
    Zhiyong Sun\thanks{Z. Sun was with Research School of Engineering, Australian National University, Canberra, Australia. He is now with the Department of Automatic Control, Lund University, Sweden. (\texttt{sun.zhiyong.} \texttt{cn@gmail.com}, \texttt{zhiyong.sun@control.lth.se}).}
}
\date{}
\begin{document}

\maketitle

\vspace{-1cm}

\begin{abstract}
We study the problem of distance-based formation control in autonomous multi-agent systems in which only distance measurements are available. This means that the target formations as well as the sensed variables are both determined by distances. We propose a fully distributed  distance-only control law, which requires neither a time synchronization of the agents nor storage of measured data. The approach is applicable to point agents in the Euclidean space of arbitrary dimension. Under the assumption of infinitesimal rigidity of the target formations, we show that the proposed control law induces local uniform asymptotic stability. Our approach involves sinusoidal perturbations in order to extract information about the negative gradient direction of each agent's local potential function. An averaging analysis reveals that the gradient information originates from an approximation of Lie brackets of certain vector fields. The method is based on a recently introduced approach to the problem of extremum seeking control. We discuss the relation in the paper.
\end{abstract}

\textbf{Key words.} distance-based formation control, distance-only measurements, averaging, Lie brackets, extremum seeking control

%--------------------------------------------------------------------------------------------------------------
%--------------------------------------------------------------------------------------------------------------
%--------------------------------------------------------------------------------------------------------------

\section{Introduction}
Distance-based formation control is an extensively studied subject in the field of autonomous multi-agent systems. The wish to achieve and maintain prescribed distances among autonomous agents in a distributed way arises in various applications such as leader-follower systems or in the context of formation shape control~\cite{Oh2015}. This task becomes especially difficult if the agents can measure only distances to other members of the team but not their relative positions.

In the present paper, we focus on the model of kinematic points in the Euclidean space of arbitrary dimension. The interaction topology is described by an undirected graph, where each node represents one of the agents. When we connect the current positions of the agents by line segments according to the edges of the graph, we obtain a graph in the Euclidean space, which is also referred to as a formation. We study the problem of distance-based formation control, i.e., the target formations are defined by distances. To be more precise, a target formation is reached if for each edge of the graph, the distance between the corresponding pair of agents is equal to a desired value. These distances are the actively controlled variables. The aim is to find a distributed  control law that steers the agents into one of the target formations. The agents have to accomplish this goal without any shared information like a global coordinate system or a common clock to synchronize their motion.

A well-established approach to solve the above problem  is a gradient descent control law~\cite{Krick2009,Dorfler2009,Oh2014,Mou2016,Sun2016}. For this purpose, every agent is assigned with a local potential function. These functions penalize deviations of the distances to the prescribed values. Each local potential function is defined in such a way that it attains its global minimum value if and only if the distances to the neighbors are equal to the desired values. Thus, a target formation is reached if all agents have minimized the values of their local potential functions. To reach the minimum, every agent follows the negative gradient direction of its local potential function. It is shown in~\cite{Krick2009,Dorfler2009,Oh2014} that this approach can lead to local uniform asymptotic stability with respect to the set of desired states. In fact, by imposing suitable rigidity assumptions on the target formations, one can prove local exponential stability; see, e.g.,~\cite{Mou2016,Sun2016}.

An implementation of the gradient descent control law requires that all agents should be able to measure the \textit{relative positions} to their neighbors in the underlying graph. It is clear that relative positions contain much more information than distances. In other words, the sensed variables are stronger than the controlled variables. It is therefore natural to ask whether distance-based formation control is still possible even if the sensed variables coincide with the controlled variables. This means that each agent can only use its own real-time distance measurements to steer itself into a target formation. We also remark that distance sensing and measurement  has emerged as a mature technique through the development of many low-cost, high precision sensors, such as ultrasonic sensors or laser scanners (see e.g., the survey in~\cite{DorfBook}). Therefore, it motivates us to explore feasible solutions to formation control with distance-only measurement, which also finds significant applications in relevant areas, e.g., multi-robotic coordination in practice.

To our best knowledge, there are just a few studies on formation control by distance-only measurements. The idea in~\cite{Anderson2011} is to compute relative positions directly from distance measurements. However, in order to do so, the agents need more information than just the distances to their neighbors in the underlying graph. It is shown in~\cite{Anderson2011} that if the graph is rigid, and if each agent also has access to the distances to its two-hop neighbors, then they can compute the relative positions by means of a Cholesky factorization of a suitable matrix, which is obtained from distance measurements. Since this factorization is only unique up to an orthogonal transformation, each agent also has to harmonize these relative positions with its individual coordinate system. This requires a certain ability to sense bearing. Thus, it is not sufficient to sense only the actively controlled distances.

Another approach is presented in~\cite{Cao20112}. In contrast to the above strategy, it suffices that each agent measures the distances to its neighbors in the underlying graph. The multi-agent system is divided into subgroups. Following a prescribed schedule, only one of these subgroups is active at a time while the other agents remain at their positions. This requires that the agents share a common clock. It is assumed that the agents of the currently active group have the ability to first localize the resting neighbors of the team by means of distance measurements, and then move into the best possible position. Note that the strategy requires that each agent can map and memorize its own motion within its own local coordinate system. For a minimally rigid graph in the plane, this algorithm leads locally to the desired convergence. However, a generalization to higher dimensions is limited, since the strategy requires a minimally rigid graph that can be constructed by means of a so-called Henneberg sequence~\cite{Anderson2008}, which is, in general, possible only in two dimensions.

A recent attempt to control formation shapes by distance-only measurements can be found in~\cite{Jiang2017}. In this case, the agents perform suitable circular motions with commensurate frequencies. Using collected data from distance measurements during a prescribed time interval, each agent can extract relative positions and relative velocities of its neighbors by means of Fourier analysis. As in~\cite{Cao20112}, the approach in~\cite{Jiang2017} relies on the assumption that the agents share a precise common clock to synchronize
their motions. The proposed strategy leads to convergence if certain control parameters are chosen sufficiently small. However, only existence of these parameters can be ensured but there is no explicit rule how to obtain them. Moreover, the control law only induces convergence to the set of desired formations but not convergence to a single static formation. In general, a common drift of the multi-agent system remains. An extension to higher dimensions is not obvious, since the extraction of relative positions and velocities relies on the geometry of the plane.

A common feature of all of the above strategies is that the agents should be able to compute or infer relative positions from distance measurements. In the present paper, we use a different approach. To explain the idea, we return to the gradient descent control law. In this case, each agent tries to  minimize its own local potential function by moving into the negative gradient direction. A computation of the gradient requires measurements of relative positions. However, the value of each local potential function can be computed from individual distance measurements, and is therefore accessible to every agent. This leads to the question of whether an agent can find the minimum of its local potential function when only the values of the function are available. To solve this problem, we use an approach that was recently introduced in the context of extremum seeking control, see, e.g.,~\cite{Duerr2013,Duerr2014,Duerr2015,Scheinker2013,Scheinker20132,Scheinker2014}. By feeding in suitable sinusoidal perturbation, we induce that the agents are driven, at least approximately, into descent directions of their local potential functions. On average, this leads to a decay of all local potential functions, and therefore convergence to a target formation. The proposed control law for each agent needs no other information than the current value of the local potential function. Under the assumption that the target formations are infinitesimally rigid (see \Cref{sec:basics} for the definition), we can ensure local uniform asymptotic stability. Our control strategy is fully distributed, and can be applied to point agents in any finite dimension.

An earlier attempt to apply Lie bracket approximations to the problem of formation shape control can be found in~\cite{Suttner2017,Suttner2018}. The control law therein requires a permanent all-to-all communication between the agents for an exchange of distance information. The control law in the present paper is based on individual distance measurements and works without any exchange of measured data. Moreover, the results in~\cite{Suttner2017,Suttner2018} contain an unknown frequency parameter for the sinusoidal perturbations. It is assumed that the frequency parameter is chosen sufficiently large; otherwise convergence to a desired formation cannot be guaranteed. The results in the above papers provide only the existence of a sufficiently large frequency parameter, but there is no explicit rule on how to find that value. The control law in the present paper can lead to local uniform asymptotic stability even if the frequency parameter is chosen arbitrarily small. We discuss the influence of the frequency parameter on the performance of our control law in the main part. 

The idea of using Lie bracket approximations to extract directional information from distance measurements can also be found in several other studies. The range of applications includes, among others, multi-agent source seeking~\cite{Duerr20112}, synchronization~\cite{Duerr20132}, and obstacle avoidance~\cite{Duerr20133}. As in the present paper, the desired states are characterized by minima of (artificial) potential functions. A purely distance-based control law is derived by using Lie bracket approximations for the direction of steepest decent. However, the above studies only guarantee practical asymptotic stability, and depend on the unknown frequency parameter that we mentioned in the previous paragraph. Our results for formation shape control are stronger because they ensure local asymptotic stability without the dependence on the frequency parameter. Thus, our findings might also be of interest to the above fields of applications.

The paper is organized as follows. In \Cref{sec:basics}, we introduce basic definitions and notations, which we use throughout the paper. As indicated above, our approach involves the notion of infinitesimal rigidity, which is recalled in \Cref{sec:infRigAndGradEst}. We also derive suitable estimates for the derivatives of the potential functions in this section. The distance-only control law and the main stability result are presented in \Cref{sec:formationControl}, which are supported by certain numerical simulations in the same section. A detailed analysis of the closed-loop system and the proof of the main theorem is carried out in \Cref{sec:proof}. In \Cref{sec:discussion}, we compare the proposed control strategy to the approach in the papers on extremum seeking control that we cited above. The paper ends with some concluding remarks in \Cref{sec:conclusions}.

\section{Basic definitions and notation}\label{sec:basics}
Recall that an \emph{affine Euclidean space} consists of a nonempty set~$P$, a vector space~$V$ with an inner product $\langle\cdot,\cdot\rangle\colon{V}\times{V}\to\mathbb{R}$, and a map $+\colon{P}\times{V}\to{P}$ such that the following conditions are satisfied: (i) $p+0=p$ for every $p\in{P}$, (ii) $(p+v)+w=p+(v+w)$ for all $p\in{P}$, $v,w\in{V}$, and (iii) for any two $p,q\in{P}$, there exists a unique $v\in{V}$, usually denoted by $v=q-p$, such that $p+v=q$. The elements of~$P$ are called \emph{points}, and the elements of~$V$ are called \emph{translations}. For instance $p,q\in{P}$ could be the positions of two agents, and $q-p\in{V}$ is the corresponding translation. In this paper, we consider the particular case $P=V=\mathbb{R}^n$, and $\langle{v},{w}\rangle$ is the standard \emph{Euclidean inner product} of $v,w\in\mathbb{R}^n$. To distinguish~$P$ and~$V$ in our notation, we use letters like $p,q,x$ for points, and letters like~$v,w$ for translations. Throughout the paper, we measure the length of a translation $v\in\mathbb{R}^n$ by the \emph{Euclidean norm} $\|v\|:=\sqrt{\langle{v,v}\rangle}$. Let $\alpha\colon\mathbb{R}^{n}\to\mathbb{R}^m$ be a linear map. Then we usually write~$\alpha{v}$ instead of~$\alpha(v)$ for $v\in{V}$. The \emph{adjoint} of~$\alpha$ is the unique linear map $\alpha^{\top}\colon\mathbb{R}^{m}\to\mathbb{R}^n$ that satisfies $\langle{v,\alpha^{\top}w}\rangle=\langle\alpha{v},w\rangle$ for every $v\in\mathbb{R}^{n}$ and every $w\in\mathbb{R}^{m}$. The \emph{rank} of~$\alpha$, i.e., the dimension of the image of $\alpha$, is denoted by~$\operatorname{rank}\alpha$.

Let $f\colon{U}\to\mathbb{R}^m$ be a map defined on a subset~$U$ of~$\mathbb{R}^n$. If $m=1$, then we call~$f$ a \emph{function}, and if $m=n$, then we call~$f$ a \emph{vector field}. For every given $y\in\mathbb{R}^m$, the \emph{fiber of $f$ over $y$}, denoted by $f^{-1}(y)$, is the (possibly empty) set of all $x\in{U}$ with $f(x)=y$. Suppose that~$U$ is open. If~$f$ is differentiable at some $p\in{U}$, then we let $\operatorname{D}\!f(p)\colon\mathbb{R}^{n}\to\mathbb{R}^m$ denote the \emph{derivative} of~$f$ at~$p$. As usual, for a nonnegative integer~$k$, the map~$f$ is said to be \emph{of class~$C^k$} if it is~$k$ times continuously differentiable. The word \emph{smooth} always means of class~$C^{\infty}$. In case of its existence, the $k$th derivative of~$f$ at $p\in{U}$, $k\geq2$, is denoted by $\operatorname{D}\!^kf(p)$, which is a $k$-linear map. If $n=1$, then we also use symbols like $\dot{f},\ddot{f},\ldots$, or $f',f'',\ldots$ for derivatives. Let $\psi\colon{U}\to\mathbb{R}$ be a differentiable function. For every $p\in{U}$, we let $\nabla\psi(p)\in\mathbb{R}^n$ denote the \emph{gradient} of~$\psi$ at~$p$, i.e., the unique vector that satisfies $\langle\nabla\psi(p),v\rangle=\operatorname{D}\!\psi(p)v$ for every $v\in\mathbb{R}^n$. The map $\nabla\psi\colon{U}\to\mathbb{R}^n$ is a vector field. Let $X\colon{U}\to\mathbb{R}^{n}$ be a vector field. For every $p\in{U}$, we define $(X\psi)(p):=\operatorname{D}\!\psi(p)X(p)$. The resulting function $X\psi\colon{U}\to\mathbb{R}$ is called the \emph{Lie derivative} of~$\psi$ along~$X$. If $X,Y\colon{U}\to\mathbb{R}^n$ are differentiable vector fields, then the vector field $[X,Y]\colon{U}\to\mathbb{R}^n$ defined by $[X,Y](p):=\operatorname{D}\!Y(p)X(p)-\operatorname{D}\!X(p)Y(p)$ is called the \emph{Lie bracket} of~$X,Y$.

%--------------------------------------------------------------------------------------------------------------
%--------------------------------------------------------------------------------------------------------------
%--------------------------------------------------------------------------------------------------------------

\section{Infinitesimal rigidity and gradient estimates}\label{sec:infRigAndGradEst}
The considerations in this section require elementary definitions from differential geometry. As in~\cite{MilnorBook}, we extend the notion of smoothness for maps on not necessarily open domains as follows. A map $f\colon{A}\to{B}$ between arbitrary sets $A\subseteq\mathbb{R}^n$ and $B\subseteq\mathbb{R}^m$ is called \emph{smooth} if for each $x\in{A}$, there exist an open neighborhood~$W$ of~$x$ in~$\mathbb{R}^n$ and a smooth map $F\colon{W}\to\mathbb{R}^m$ such that $f(\xi)=F(\xi)$ holds for every $\xi\in{A\cap{W}}$. A subset~$M$ of~$\mathbb{R}^{n}$ is called a \emph{smooth manifold} of dimension~$k$ if for each point $p\in{M}$ there exists a \emph{parametrization} of~$M$ at~$p$, i.e., a homeomorphism $\phi\colon{V}\to{U}$ from an open subset~$V$ of~$\mathbb{R}^k$ onto an open neighborhood~$U$ of~$p$ in~$M$ (where~$M$ is endowed with the subspace topology) such that both~$\phi$ and~$\phi^{-1}$ are smooth. Let $M\subseteq\mathbb{R}^n$ be a smooth manifold of dimension~$k$, and let $\phi\colon{V}\to{U}$ be a parametrization of~$M$ at $p\in{M}$. Let $\operatorname{D}\!\phi(\phi^{-1}(p))\colon\mathbb{R}^k\to\mathbb{R}^n$ denote the derivative of~$\phi$ at $\phi^{-1}(p)$, where~$\phi$ is considered as a map from~$V$ into~$\mathbb{R}^n$. The image of $\operatorname{D}\!\phi(\phi^{-1}(p))$ is a $k$-dimensional subspace of~$\mathbb{R}^n$, which is called the \emph{tangent space} to~$M$ at~$p$. This space does not depend on the particular choice of the parametrization of~$M$ at~$p$; see again~\cite{MilnorBook}.

\subsection{Infinitesimal rigidity}\label{sec:infRig}
In this subsection, we recall several definitions and statements from~\cite{Asimow1978,Asimow1979}.

An (\emph{undirected}) \emph{graph} $G=(V,E)$ is a set $V=\{1,\ldots,N\}$ together with a nonempty set~$E$ of two-element subsets of~$V$. Each element of~$V$ is referred to as a \emph{vertex} of~$G$ and each element of~$E$ is called an \emph{edge} of~$G$. As an abbreviation, we denote an edge $\{i,j\}\in{E}$ simply by~$ij$. A \emph{framework}~$G(p)$ in~$\mathbb{R}^n$ is a graph~$G$ with~$N$ vertices together with a point
\[
p \ = \ (p_1,\ldots,p_N) \, \in \, \mathbb{R}^n\times\cdots\times\mathbb{R}^n \ = \ \mathbb{R}^{nN}.
\]
Note that for a framework~$G(p)$ in~$\mathbb{R}^{n}$, we may have $p_i=p_j$ for $i\neq{j}$.

Consider a graph $G=(V,E)$ with~$N$ vertices and~$M$ edges, that is, $V=\{1,\ldots,N\}$, and~$E$ has~$M$ elements. Order the~$M$ edges of~$G$ in some way and define the \emph{edge map} $f_G\colon\mathbb{R}^{nN}\to\mathbb{R}^{M}$ of~$G$ by
\[
f_G(p) \ := \ (\ldots, \|p_j-p_i\|^2, \ldots)_{ij\in{E}}
\]
for every $p=(p_1,\ldots,p_N)\in\mathbb{R}^{nN}$. Thus, the value of~$f_G$ at any $(p_1,\ldots,p_N)\in\mathbb{R}^{nN}$ is a vector that collects the squared distances $\|p_j-p_i\|^2$ for all edges ${ij\in{E}}$. A point $p\in\mathbb{R}^{nN}$ is said to be a \emph{regular point} of~$f_G$ if the function $\operatorname{rank}\operatorname{D}\!f_G\colon\mathbb{R}^{nN}\to\mathbb{R}$ attains its global maximum value at~$p$. For later references, we state the following result from~\cite{Asimow1978}, which is an easy consequence of the Inverse Function Theorem.
\begin{proposition}\label{thm:IFT}
Let~$G$ be a graph with~$N$ vertices and~$M$ edges. If $p\in\mathbb{R}^{nN}$ is a regular point of~$f_G$, then there exists an open neighborhood~$U$ of~$p$ in~$\mathbb{R}^{nN}$ such that the subset~$f_G(U)$ of~$\mathbb{R}^M$ is a smooth manifold of dimension $\operatorname{rank}\operatorname{D}\!f_G(p)$.
\end{proposition}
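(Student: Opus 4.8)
The plan is to reduce the statement to the classical rank theorem, which in turn follows from the Inverse Function Theorem. Write $r := \operatorname{rank}\operatorname{D}\!f_G(p)$, and note first that $f_G$ is smooth, since each of its components $p\mapsto\|p_j-p_i\|^2$ is a polynomial. The strategy has three stages: show that $f_G$ has \emph{constant} rank~$r$ on a whole neighborhood of~$p$; use the Inverse Function Theorem to introduce local coordinates in which $f_G$ becomes a projection followed by a graph map; and finally read off that the local image is the graph of a smooth map, hence a smooth manifold of dimension~$r$.

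First I would establish constant rank near~$p$. The rank of $\operatorname{D}\!f_G$ is lower semicontinuous: if $\operatorname{D}\!f_G(p)$ has a nonvanishing $r\times r$ minor, then by continuity the same minor is nonzero on an open neighborhood of~$p$, so $\operatorname{rank}\operatorname{D}\!f_G\geq r$ there. Because~$p$ is a regular point, $r$ is the global maximum of $\operatorname{rank}\operatorname{D}\!f_G$, so the reverse inequality holds everywhere. Hence $\operatorname{rank}\operatorname{D}\!f_G\equiv r$ on a neighborhood of~$p$.

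Next comes the straightening step via the Inverse Function Theorem. After permuting the coordinates of~$\mathbb{R}^{nN}$ and of~$\mathbb{R}^M$, I may assume that the upper-left $r\times r$ block of $\operatorname{D}\!f_G(p)$ is nonsingular. Split the coordinates as $(x,y)\in\mathbb{R}^r\times\mathbb{R}^{nN-r}$ and $(u,w)\in\mathbb{R}^r\times\mathbb{R}^{M-r}$, and write $f_G=(a,b)$ accordingly, so that $\partial a/\partial x$ is invertible at~$p$. Define $\Phi(x,y):=(a(x,y),y)$; its derivative at~$p$ is block triangular with invertible diagonal blocks, hence invertible, so $\Phi$ is a diffeomorphism from a neighborhood of~$p$ onto a neighborhood of~$\Phi(p)$. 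In the new coordinates the map takes the form $(f_G\circ\Phi^{-1})(u,y)=(u,\tilde{b}(u,y))$ for a smooth map~$\tilde{b}$.

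The decisive step, and the one I expect to be the main obstacle, is showing that~$\tilde{b}$ does not depend on~$y$. Since $\Phi^{-1}$ is a diffeomorphism, $f_G\circ\Phi^{-1}$ has the same (constant) rank~$r$ as~$f_G$ near~$\Phi(p)$. Its derivative has the block form $\left(\begin{smallmatrix} I & 0 \\ \ast & \partial\tilde{b}/\partial y\end{smallmatrix}\right)$, whose rank is $r+\operatorname{rank}(\partial\tilde{b}/\partial y)$; constancy at~$r$ forces $\partial\tilde{b}/\partial y\equiv 0$ on a connected neighborhood, so $\tilde{b}(u,y)=\tilde{b}(u)$ there. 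Consequently the local image is $\{(u,\tilde{b}(u))\}$, the graph of a smooth map on an open subset of~$\mathbb{R}^r$. Choosing~$U$ so that $\Phi(U)$ is a product neighborhood, the set $f_G(U)=(f_G\circ\Phi^{-1})(\Phi(U))$ is exactly this graph, and $u\mapsto(u,\tilde{b}(u))$ is a parametrization exhibiting it as a smooth manifold of dimension~$r$. The care required is to shrink the neighborhoods consistently, so that the image is genuinely the full graph and so that the constant-rank conclusion applies throughout.
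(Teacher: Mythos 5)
Your proof is correct, and it is precisely the ``easy consequence of the Inverse Function Theorem'' that the paper alludes to when citing this result from Asimow--Roth: lower semicontinuity of rank plus regularity gives locally constant rank, and the straightening $\Phi(x,y)=(a(x,y),y)$ then exhibits the local image as the graph of a smooth map, i.e., the local constant rank theorem. The paper supplies no further details, so the only point worth flagging is one you already noted yourself --- you must take $\Phi(U)$ to be a product neighborhood (or one with connected $y$-slices), since $\partial\tilde{b}/\partial y\equiv 0$ on a merely connected open set does not by itself force $\tilde{b}$ to depend on $u$ alone.
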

The \emph{complete graph} with~$N$ vertices is the graph with~$N$ vertices that has each two-element subset of $\{1,\ldots,N\}$ as an edge.
\begin{definition}
Let~$G$ be a graph with~$N$ vertices, let~$C$ be the complete graph with~$N$ vertices, and let $p\in\mathbb{R}^{nN}$. The framework~$G(p)$ in~$\mathbb{R}^{n}$ is said to be \emph{rigid} if there exists a neighborhood~$U$ of~$p$ in~$\mathbb{R}^{nN}$ such that
\begin{equation}\label{eq:rigdity}
f_{G}^{-1}(f_G(p))\cap{U} \ = \ f_{C}^{-1}(f_C(p))\cap{U}.
\end{equation}
\end{definition}
Thus, a framework $G(p)$ is rigid if and only if for every $q$ sufficiently close to $p$ with ${\|q_j-q_i\|}={\|p_j-p_i\|}$ for every edge $ij$ of $G$, we have in fact ${\|q_j-q_i\|}={\|p_j-p_i\|}$ for all vertices $i,j$ of $G$. Another result from~\cite{Asimow1978} is the following.
\begin{proposition}\label{thm:EuclidenGroup}
Let~$C$ be the complete graph with~$N$ vertices. For every $p\in\mathbb{R}^{nN}$, the subset $f_{C}^{-1}(f_{C}(p))$ of~$\mathbb{R}^{nN}$ is a smooth manifold.
\end{proposition}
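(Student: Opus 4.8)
The plan is to identify the fiber $f_C^{-1}(f_C(p))$ with a single orbit of a Lie group action, and then invoke the structure theory of such orbits. Let $\mathcal{E}_n := \{\, x\mapsto Ax+b : A\in O(n),\ b\in\mathbb{R}^n \,\}$ denote the group of isometries of $\mathbb{R}^n$, acting diagonally on $\mathbb{R}^{nN}$ by $g\cdot(p_1,\ldots,p_N):=(gp_1,\ldots,gp_N)$. I would first establish the identity
\[
f_C^{-1}(f_C(p)) \;=\; \{\, g\cdot p : g\in\mathcal{E}_n \,\}.
\]
The inclusion ``$\supseteq$'' is immediate, since every isometry preserves the Euclidean distance and hence each squared distance $\|p_j-p_i\|^2$. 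For ``$\subseteq$'', suppose $q\in f_C^{-1}(f_C(p))$, i.e. $\|q_j-q_i\|=\|p_j-p_i\|$ for all pairs $i,j$. After translating so that $p_1=q_1=0$, the polarization identity $2\langle u,v\rangle = \|u\|^2+\|v\|^2-\|u-v\|^2$ shows that the Gram matrices $(\langle p_i,p_j\rangle)_{i,j}$ and $(\langle q_i,q_j\rangle)_{i,j}$ coincide. Hence the assignment $p_i\mapsto q_i$ extends to a linear isometry between the linear spans of the $p_i$ and of the $q_i$, which in turn extends to an orthogonal map $A\in O(n)$ of all of $\mathbb{R}^n$; reintroducing the translation produces a $g\in\mathcal{E}_n$ with $g\cdot p=q$.

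It then remains to argue that every orbit of this action is a smooth manifold in the sense of \Cref{sec:infRigAndGradEst}, i.e. an embedded submanifold of $\mathbb{R}^{nN}$. The stabilizer $S:=\{\,g\in\mathcal{E}_n : g\cdot p=p\,\}$ is a closed subgroup of the Lie group $\mathcal{E}_n$, hence itself a Lie subgroup, and the homogeneous space $\mathcal{E}_n/S$ carries a natural smooth structure for which the orbit map descends to an injective immersion onto $\{\,g\cdot p : g\in\mathcal{E}_n\,\}$. To upgrade this immersed image to an embedded submanifold, I would verify that the action of $\mathcal{E}_n$ on $\mathbb{R}^{nN}$ is proper and then appeal to the standard fact that orbits of a smooth proper action are embedded submanifolds diffeomorphic to $\mathcal{E}_n/S$. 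Properness is where the compactness of $O(n)$ enters: given sequences $g_k=(A_k,b_k)\in\mathcal{E}_n$ and $x_k\to x$ in $\mathbb{R}^{nN}$ with $g_k\cdot x_k\to y$, compactness of $O(n)$ lets one pass to a subsequence along which $A_k\to A\in O(n)$, and reading off the first coordinate, $A_k x_{k,1}+b_k\to y_1$ forces $b_k\to y_1-Ax_1$, so $(g_k)$ has a convergent subsequence.

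The main obstacle, and the reason a one-line application of a Regular Level Set Theorem does not suffice, is that $f_C(p)$ is in general not a regular value of $f_C$: the rank of $\operatorname{D}\!f_C$ drops on degenerate configurations (e.g. when the points become collinear) and is not constant on a full neighborhood of the fiber, so the Constant Rank Theorem cannot be applied directly to the level set. The orbit description circumvents this entirely, since the rank of $\operatorname{D}\!f_C$ is automatically constant along an orbit (because $f_C\circ g=f_C$ for every $g\in\mathcal{E}_n$), and the dimension of the resulting manifold equals $\dim\mathcal{E}_n-\dim S$, which varies with the affine dimension of the configuration $p$. The one genuinely delicate point to get right is precisely the passage from an immersed orbit to an embedded submanifold, i.e. the properness argument above, which is what guarantees that the subspace topology on the fiber agrees with its manifold topology.
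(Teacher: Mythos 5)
Your proof is correct and follows essentially the same route as the paper, which does not prove \Cref{thm:EuclidenGroup} itself but defers to~\cite{Asimow1978}: there, too, the fiber $f_{C}^{-1}(f_{C}(p))$ is identified with the orbit of~$p$ under the diagonal action of the Euclidean group $\operatorname{E}(n)$, and the manifold structure is obtained from the theory of orbits of (proper) Lie group actions. Your write-up supplies the two details left implicit in that reference---the Gram-matrix/polarization argument for the inclusion $f_{C}^{-1}(f_{C}(p))\subseteq\operatorname{E}(n)\cdot p$, and the properness of the action needed to conclude that the orbit is embedded rather than merely immersed---and both are handled correctly.
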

The manifold $f_{C}^{-1}(f_{C}(p))$ is actually analytic and one can derive an explicit formula for its dimension; see again~\cite{Asimow1978}. As in~\cite{Asimow1979}, we use the manifold structure of $f_{C}^{-1}(f_{C}(p))$ to define infinitesimal rigidity.
\begin{definition}
A framework~$G(p)$ in~$\mathbb{R}^n$ is \emph{infinitesimally rigid} if the tangent space to $f_{C}^{-1}(f_{C}(p))$ at~$p$ coincides with the kernel of~$\operatorname{D}\!f_G(p)$.
\end{definition}
To make the notion of infinitesimal rigidity more intuitive, we recall a geometric interpretation from~\cite{Gluck1975}. For this purpose, we consider \emph{smooth isometric deformations} of a given framework~$G(p)$, i.e., smooth curves from an open time interval around~$0$ into the set $f_{G}^{-1}(f_G(p))$ passing through~$p$ at time~$0$. By definition, each such curve $\gamma=(\gamma_1,\ldots,\gamma_N)$ preserves the squared distances $\|\gamma_j(t)-\gamma_i(t)\|^2$ for all edges $ij$ of $G$, and we have $f_G(\gamma(t))=f_G(p)$ for every~$t$ in the domain of~$\gamma$. By the chain rule, this implies that the velocity vector $\dot{\gamma}(0)$ of~$\gamma$ at time~$0$ is an element of the kernel of~$\operatorname{D}\!f_G(p)$ (which is termed \textit{rigidity matrix} in the literature of graph rigidity; see e.g.,~\cite{Asimow1979}). This explains why vectors in the kernel of~$\operatorname{D}\!f_G(p)$ are referred to as \emph{infinitesimal isometric perturbations} of~$G(p)$. On the other hand, the tangent space to the smooth manifold $f_{C}^{-1}(f_{C}(p))$ at~$p$ consists of the velocities of all smooth curves in $f_{C}^{-1}(f_{C}(p))$ passing through~$p$. By definition, the curves in $f_{C}^{-1}(f_{C}(p))$ preserve the squared distances for all vertices of~$G$. Thus, infinitesimal rigidity of~$G(p)$ means that, for every smooth curve~$\gamma$ of the form $\gamma(t)=p+tv$ with~$v$ being an infinitesimal isometric perturbations of~$G(p)$, changes of the squared distances $\|\gamma_j(t)-\gamma_i(t)\|^2$ are not detectable around~$t=0$ in \textit{first-order} terms for all vertices~$i,j$ of~$G$.

For our purposes, it is more convenient to characterize the notion of infinitesimal rigidity by the following result from~\cite{Asimow1979}.
\begin{theorem}\label{thm:27112017}
A framework~$G(p)$ in~$\mathbb{R}^n$ is \emph{infinitesimally rigid} if and only if~$p$ is a regular point of~$f_G$ and if~$G(p)$ is rigid.
\end{theorem}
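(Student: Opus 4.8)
The plan is to exploit the single structural fact that the edge set of $G$ is contained in that of the complete graph $C$, so $f_G$ is just a coordinate restriction of $f_C$. Writing $\mathcal{M}_G:=f_G^{-1}(f_G(p))$ and $\mathcal{M}_C:=f_C^{-1}(f_C(p))$, this immediately gives $\mathcal{M}_C\subseteq\mathcal{M}_G$ and $\ker\operatorname{D}\!f_C(p)\subseteq\ker\operatorname{D}\!f_G(p)$. Since every smooth curve in $\mathcal{M}_C$ is also a curve in $\mathcal{M}_G$, the same chain-rule argument used above for isometric deformations yields the inclusions $T_p\mathcal{M}_C\subseteq\ker\operatorname{D}\!f_C(p)\subseteq\ker\operatorname{D}\!f_G(p)$, where $T_p\mathcal{M}_C$ is the tangent space at $p$ to the manifold from \Cref{thm:EuclidenGroup}. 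The one analytic tool I would rely on is the Constant Rank Theorem: at a point of locally constant rank the fibre $\mathcal{M}_G$ is locally a smooth manifold of dimension $nN-\operatorname{rank}\operatorname{D}\!f_G(p)$ whose tangent space at $p$ equals $\ker\operatorname{D}\!f_G(p)$. I would first note that a regular point is exactly a point of locally constant rank, because $\operatorname{rank}\operatorname{D}\!f_G$ is lower semicontinuous and the globally maximal value cannot increase further.

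For the implication ``$p$ regular and $G(p)$ rigid $\Rightarrow$ infinitesimally rigid'', regularity makes $\operatorname{rank}\operatorname{D}\!f_G$ locally constant, so the Constant Rank Theorem gives that $\mathcal{M}_G$ is a smooth manifold near $p$ with $T_p\mathcal{M}_G=\ker\operatorname{D}\!f_G(p)$. Rigidity supplies a neighbourhood $U$ with $\mathcal{M}_G\cap U=\mathcal{M}_C\cap U$, so near $p$ the two fibres are the same set; since the tangent space at $p$ depends only on the germ of the set at $p$, I conclude $\ker\operatorname{D}\!f_G(p)=T_p\mathcal{M}_G=T_p\mathcal{M}_C$, which is infinitesimal rigidity.

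For the converse, infinitesimal rigidity forces equality throughout the chain above, hence $T_p\mathcal{M}_C=\ker\operatorname{D}\!f_C(p)=\ker\operatorname{D}\!f_G(p)$ and $\operatorname{rank}\operatorname{D}\!f_G(p)=nN-\dim\mathcal{M}_C$. To obtain regularity I would bound the maximal rank from above by producing a configuration with large kernel: the trivial infinitesimal isometries $u_i=v+Aq_i$ (with $v\in\mathbb{R}^n$ and $A\in\mathfrak{so}(n)$) always lie in $\ker\operatorname{D}\!f_G(q)$ and span a subspace of dimension $\dim\mathcal{M}_C(q)$. Choosing $q$ simultaneously in the dense open locus of maximal rank and in the dense open set of configurations affinely spanning $\mathbb{R}^n$, this dimension is the maximal orbit dimension $n(n+1)/2\ge\dim\mathcal{M}_C$, so the maximal rank is at most $nN-\dim\mathcal{M}_C=\operatorname{rank}\operatorname{D}\!f_G(p)$, forcing $p$ to be regular. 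Once $p$ is regular, the Constant Rank Theorem again makes $\mathcal{M}_G$ a manifold near $p$ of dimension $\dim\mathcal{M}_C$ with $T_p\mathcal{M}_G=\ker\operatorname{D}\!f_G(p)=T_p\mathcal{M}_C$; thus $\mathcal{M}_C$ is a submanifold of $\mathcal{M}_G$ of equal dimension through $p$, hence open in $\mathcal{M}_G$, while being closed as a fibre, so on a connected neighbourhood the two coincide and $G(p)$ is rigid.

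The routine parts are the two invocations of the Constant Rank Theorem and the germ-invariance of tangent spaces. The step I expect to be the crux is establishing \emph{global} maximality of the rank at $p$ in the converse direction: local constancy of the rank is essentially immediate, but upgrading it to the global maximum demanded by the definition of a regular point is where one must bring in the orbit structure of $\mathcal{M}_C$ under the Euclidean isometry group -- specifically that $\dim\mathcal{M}_C(q)\le n(n+1)/2$ with equality on the dense set of affinely spanning configurations, together with the density of the maximal-rank locus as the complement of an algebraic set.
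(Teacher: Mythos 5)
A preliminary remark on the comparison itself: the paper does not prove \Cref{thm:27112017} at all — it quotes the statement from the rigidity literature (\cite{Asimow1979}), so there is no internal proof to measure your argument against. What you have written is essentially a reconstruction of the classical Asimow--Roth argument, and most of it is sound: the reduction of regularity to local constancy of rank via lower semicontinuity, the two invocations of the Constant Rank Theorem to identify the tangent space of $\mathcal{M}_G:=f_G^{-1}(f_G(p))$ at~$p$ with $\ker\operatorname{D}\!f_G(p)$, the germ-invariance of tangent spaces for the direction ``regular and rigid $\Rightarrow$ infinitesimally rigid'', and the open-plus-closed connectedness argument (via invariance of domain) recovering rigidity in the converse are all correct as written.

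There is, however, one step that genuinely fails as stated, and it sits exactly at what you identified as the crux. To prove regularity you pick $q$ ``in the dense open set of configurations affinely spanning $\mathbb{R}^n$'' and conclude that the trivial motions at such~$q$ have dimension $n(n+1)/2$, hence $\max\operatorname{rank}\operatorname{D}\!f_G\le nN-n(n+1)/2$. When $N\le n$ that set is empty ($N$ points span an affine subspace of dimension at most $N-1$), and the claimed rank bound is then actually false, not merely unproved: for $N=2$, $n=3$ one has $nN-n(n+1)/2=0$ while the maximal rank of $\operatorname{D}\!f_G$ is $1$. The theorem carries no restriction relating $N$ and $n$, and such degenerate frameworks can be infinitesimally rigid — two distinct points joined by an edge in $\mathbb{R}^3$ satisfy the definition — so the case cannot be discarded. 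The repair is to work with the maximal possible affine-span dimension $k^{\ast}=\min(N-1,n)$ (an open dense condition) instead of full affine span, and to use the orbit-dimension formula: a configuration whose affine span has dimension~$k$ has $\operatorname{E}(n)$-orbit of dimension $\delta(k)=n(n+1)/2-\tbinom{n-k}{2}$, which is increasing in~$k$. Hence at a generic $q$ the trivial motions span a space of dimension $\delta(k^{\ast})\ge\delta(k_p)=\dim f_C^{-1}(f_C(p))$, which is all your inequality chain needs to force $\operatorname{rank}\operatorname{D}\!f_G(p)$ to be maximal. Note that this repair also makes explicit a fact you use tacitly throughout, namely that $f_C^{-1}(f_C(q))$ is precisely the $\operatorname{E}(n)$-orbit of~$q$ (congruent finite point sets are related by an ambient isometry); the paper's \Cref{thm:EuclidenGroup} only asserts that this fiber is a smooth manifold.
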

It follows that the notions of rigidity and infinitesimal rigidity coincide at regular points of the edge map. Finally, we note that it is also possible to characterize infinitesimal rigidity of~$G(p)$ in~$\mathbb{R}^n$ by means of an explicit formula for~$\operatorname{rank}\operatorname{D}\!f_G(p)$; see again~\cite{Asimow1979}.

%--------------------------------------------------------------------------------------------------------------
%--------------------------------------------------------------------------------------------------------------

\subsection{Gradient estimates}\label{sec:GradEst}
In this subsection,~$G=(V,E)$ is a graph with~$N$ vertices and~$M$ edges. Let $f_G\colon\mathbb{R}^{nN}\to\mathbb{R}^{M}$ be the edge map of~$G$. For each edge $ij\in{E}$, let~$d_{ij}$ be a nonnegative real number. Define $d:=(d_{ij}^2)_{ij\in{E}}\in\mathbb{R}^{M}$, where the components of~$d$ are ordered in the same way as the components of~$f_G$. Define a nonnegative smooth function $\psi_{G,d}\colon\mathbb{R}^{nN}\to\mathbb{R}$ by
\begin{equation}\label{eq:potentialFunction}
\psi_{G,d}(p) \ := \ \frac{1}{4}\,\|f_{G}(p)-d\|^2 \ = \ \frac{1}{4}\sum_{ij\in{E}}\big(\|p_j-p_i\|^2-d_{ij}^2\big)^2
\end{equation}
for every $p\in\mathbb{R}^{nN}$. This type of function will appear again in the subsequent sections as local and global potential function of a system of~$N$ agents in~$\mathbb{R}^n$. Our aim is to derive boundedness properties for the gradient of~$\psi_{G,d}$. For this purpose, we need the following auxiliary statements.
\begin{lemma}\label{thm:10112017}
Let $g\colon{U}\to\mathbb{R}$ be a nonnegative~$C^2$ function on an open subset~$U$ of~$\mathbb{R}^k$.
\begin{enumerate}[label=(\alph*)]
	\item\label{thm:10112017:A} For every compact subset~$K$ of~$U$, there exists $c_1>0$ such that $\|\nabla{g(x)}\|^2\leq{c_1}\,g(x)$ for every $x\in{K}$.
	\item\label{thm:10112017:B} Suppose that there exists $z\in{U}$ such that $g(z)=0$ and such that the second derivative of~$g$ at~$z$ is positive definite. Then, there exist $c_3>0$ and a neighborhood~$W$ of~$z$ in~$U$ such that $\|\nabla{g(x)}\|^2\geq{c_3}\,g(x)$ for every $x\in{W}$.
\end{enumerate}
\end{lemma}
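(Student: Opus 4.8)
For part~\ref{thm:10112017:A}, the plan is to exploit the fact that a nonnegative function cannot decrease below zero along any ray, which forces the gradient to be controlled by the function value whenever the second derivative is bounded. Since $K$ is compact and $U$ is open, I would first choose $\delta>0$ so that the closed $\delta$-neighborhood $K_\delta$ of $K$ is a compact subset of $U$; on $K_\delta$ both $\|\nabla{g}\|$ and the bilinear map $\operatorname{D}\!^2g$ are bounded, say by $B$ and $M$ respectively. Fix $x\in{K}$ with $\nabla{g(x)}\neq0$ (the case $\nabla{g(x)}=0$ being trivial), set $v:=-\nabla{g(x)}/\|\nabla{g(x)}\|$, and apply Taylor's theorem to $t\mapsto{g(x+tv)}$ with second-order Lagrange remainder, using that the segment stays in $K_\delta$ for $t\in[0,\delta]$. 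This yields
\[
0 \ \leq \ g(x+tv) \ \leq \ g(x)-t\,\|\nabla{g(x)}\|+\tfrac{1}{2}M t^2 \qquad (t\in[0,\delta]),
\]
where the left inequality is just nonnegativity of~$g$.

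The main technical point is the choice of~$t$, and here the bounded domain forces a case distinction. If $\|\nabla{g(x)}\|\leq{M\delta}$, the unconstrained minimizer $t=\|\nabla{g(x)}\|/M$ of the right-hand side lies in $[0,\delta]$, and substituting it gives $\|\nabla{g(x)}\|^2\leq2M\,g(x)$. If instead $\|\nabla{g(x)}\|>M\delta$, I would take the boundary value $t=\delta$; the inequality then rearranges to $\|\nabla{g(x)}\|\leq\tfrac{2}{\delta}\,g(x)$, and multiplying by the uniform bound $\|\nabla{g(x)}\|\leq{B}$ yields $\|\nabla{g(x)}\|^2\leq\tfrac{2B}{\delta}\,g(x)$. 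Setting $c_1:=\max\{2M,\,2B/\delta\}$ then covers both cases uniformly over $x\in{K}$.

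For part~\ref{thm:10112017:B}, I would localize around~$z$ and use that $g(z)=0$ together with $g\geq0$ makes~$z$ a local minimum, so $\nabla{g(z)}=0$. Identify $\operatorname{D}\!^2g(z)$ with the symmetric Hessian matrix~$H$; by hypothesis~$H$ is positive definite, so its eigenvalues lie in an interval $[\lambda,\Lambda]$ with $\lambda>0$. Since $\nabla{g}$ is differentiable at~$z$ with derivative~$H$, and~$g$ admits a second-order Taylor expansion at~$z$, I have, writing $u:=x-z$,
\[
\nabla{g(x)}=Hu+o(\|u\|), \qquad g(x)=\tfrac{1}{2}\langle{Hu,u}\rangle+o(\|u\|^2)
\]
as $x\to{z}$. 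From $\|Hu\|\geq\lambda\|u\|$ and $\langle{Hu,u}\rangle\leq\Lambda\|u\|^2$, the first expansion gives $\|\nabla{g(x)}\|\geq\tfrac{\lambda}{2}\|u\|$ and the second gives $g(x)\leq\Lambda\|u\|^2$, both valid on a sufficiently small ball~$W$ around~$z$ where the remainder terms are dominated. Combining these estimates yields $\|\nabla{g(x)}\|^2\geq\tfrac{\lambda^2}{4}\|u\|^2\geq\tfrac{\lambda^2}{4\Lambda}\,g(x)$, so $c_3:=\lambda^2/(4\Lambda)$ works, with the case $x=z$ being trivial.

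The routine calculations I am skipping are the explicit Taylor remainder estimates and the precise radius of~$W$. The only genuine obstacle is in part~\ref{thm:10112017:A}: the natural descent-direction argument requires the test segment to remain inside~$U$, which is exactly what forces the enlargement of~$K$ to~$K_\delta$ and the second, large-gradient case; part~\ref{thm:10112017:B} is then a direct eigenvalue-and-Taylor comparison.
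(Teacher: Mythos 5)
Your proof is correct and follows exactly the route the paper indicates: the paper omits the proof of this lemma, remarking only that both estimates ``can be easily deduced from Taylor's formula,'' which is precisely what you carry out. Your descent-direction argument with the case split for part~(a) (handling the constraint that the test segment must stay in~$U$) and the eigenvalue--Taylor comparison for part~(b) are both sound, so nothing further is needed.
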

The above estimates for the gradient can be easily deduced from Taylor's formula. We omit the proof here. For every $r>0$, define the sublevel set
\[
\psi_{G,d}^{-1}(\leq{r}) \ := \ \{p\in\mathbb{R}^{nN} \ | \ \psi_{G,d}(p)\leq{r}\}.
\]
\begin{proposition}\label{thm:11112017}
\begin{enumerate}[label=(\alph*)]
	\item\label{thm:11112017:A} For every $r>0$, there exists $c_1>0$ such that
	\begin{equation}\label{eq:11112017:A}
	\|\nabla\psi_{G,d}(p)\|^2 \ \leq \ c_1\,\psi_{G,d}(p)
	\end{equation}
	for every $p\in\psi_{G,d}^{-1}(\leq{r})$.
	\item\label{thm:11112017:B} For every $r>0$ and every integer $l\geq2$, there exists $c_2>0$ such that
	\begin{equation}\label{eq:11112017:B}
	|\operatorname{D}\!^l\psi_{G,d}(p)(v_1,\ldots,v_l)| \ \leq \ c_2\,\|v_1\|\cdots\|v_l\|
	\end{equation}
	for every $p\in\psi^{-1}(\leq{r})$ and all $v_1,\ldots,v_l\in\mathbb{R}^{nN}$.
	\item\label{thm:11112017:C} Suppose that for each $p\in{f_{G}^{-1}(d)}$, the framework~$G(p)$ is infinitesimally rigid. Then, there exist $r,c_3>0$ such that
	\begin{equation}\label{eq:11112017:C}
	\|\nabla\psi_{G,d}(p)\|^2 \ \geq \ c_3\,\psi_{G,d}(p)
	\end{equation}
	for every $p\in\psi_{G,d}^{-1}(\leq{r})$.
\end{enumerate}
\end{proposition}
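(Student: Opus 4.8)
The three estimates rest on one structural observation that I would record at the outset: $\psi_{G,d}$ depends on $p$ only through the edge differences $p_j-p_i$, and on any sublevel set these differences are bounded. Indeed, for $p\in\psi_{G,d}^{-1}(\leq r)$ the $ij$-term of \eqref{eq:potentialFunction} satisfies $(\|p_j-p_i\|^2-d_{ij}^2)^2\leq4\psi_{G,d}(p)\leq4r$, hence $\|p_j-p_i\|^2\leq d_{ij}^2+2\sqrt{r}$. For part \ref{thm:11112017:A} I would then differentiate \eqref{eq:potentialFunction} to obtain $\nabla\psi_{G,d}(p)=\tfrac12\operatorname{D}\!f_G(p)^{\top}(f_G(p)-d)$, so that
\[
\|\nabla\psi_{G,d}(p)\|^2 \ \leq \ \tfrac14\,\|\operatorname{D}\!f_G(p)\|^2\,\|f_G(p)-d\|^2 \ = \ \|\operatorname{D}\!f_G(p)\|^2\,\psi_{G,d}(p),
\]
where $\|\operatorname{D}\!f_G(p)\|$ is the operator norm. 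Since the rows of $\operatorname{D}\!f_G(p)$ are the vectors $\pm2(p_j-p_i)$, this norm is controlled by $\max_{ij\in E}\|p_j-p_i\|$, which is bounded on $\psi_{G,d}^{-1}(\leq r)$ by the observation above; the resulting bound on $\|\operatorname{D}\!f_G(p)\|^2$ serves as $c_1$. No rigidity is needed here.

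For part \ref{thm:11112017:B} I would use that $\psi_{G,d}$ is a polynomial of degree four, so $\operatorname{D}\!^l\psi_{G,d}\equiv0$ for $l\geq5$ and the claim is trivial in that range. For $2\leq l\leq4$ the coefficients of the $l$-linear form $\operatorname{D}\!^l\psi_{G,d}(p)$ are polynomials in the edge differences $p_j-p_i$, because $\psi_{G,d}$, and therefore each of its derivatives, is invariant under the diagonal translation $p\mapsto p+(t,\dots,t)$. Boundedness of the edge differences on $\psi_{G,d}^{-1}(\leq r)$ bounds these coefficients, which yields a uniform bound $c_2$ on the norm of the multilinear form and hence \eqref{eq:11112017:B}.

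Part \ref{thm:11112017:C} is where the real content lies. Write $Z:=\psi_{G,d}^{-1}(0)=f_G^{-1}(d)$ for the zero set, assumed nonempty (otherwise a small sublevel set is empty and the claim holds vacuously). For $z\in Z$ one has $f_G(z)-d=0$, and differentiating twice gives
\[
\operatorname{D}\!^2\psi_{G,d}(z)(v,v) \ = \ \tfrac12\,\|\operatorname{D}\!f_G(z)\,v\|^2,
\]
a positive semidefinite form whose kernel is exactly $\ker\operatorname{D}\!f_G(z)$. The decisive input is infinitesimal rigidity: by \Cref{thm:27112017} each $z\in Z$ is a regular point of $f_G$ and $G(z)$ is rigid, so near $z$ the set $Z$ coincides with the smooth manifold $f_C^{-1}(f_C(z))$ of \Cref{thm:EuclidenGroup}, whose tangent space at $z$ is $\ker\operatorname{D}\!f_G(z)$. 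Thus $\psi_{G,d}$ is of Morse--Bott type along $Z$, nondegenerate in the normal directions. On the affine transversal $T_z:=z+(\ker\operatorname{D}\!f_G(z))^{\perp}$ the restriction $\psi_{G,d}|_{T_z}$ vanishes at $z$ with positive definite second derivative, so \Cref{thm:10112017}\ref{thm:10112017:B} supplies $c_3(z)>0$ and a neighborhood of $z$ in $T_z$ on which $\|\nabla(\psi_{G,d}|_{T_z})\|^2\geq c_3(z)\,\psi_{G,d}$; since the ambient gradient dominates its orthogonal projection onto $T_z$, the same bound holds for $\|\nabla\psi_{G,d}\|^2$ there.

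The main obstacle is upgrading these pointwise, $z$-dependent estimates to a single pair $r,c_3$, against two sources of non-compactness. First, both $\psi_{G,d}$ and $\|\nabla\psi_{G,d}\|$ are invariant under the diagonal action of the isometry group $\mathrm{O}(n)\ltimes\mathbb{R}^n$, so $Z$ is itself invariant and non-compact. I would exploit this invariance: infinitesimal rigidity forces $G$ to be connected and the realizations of $d$ to be isolated modulo congruence, so $Z$ modulo the isometry group is compact (indeed finite), and a compactness argument along a slice transversal to the translations yields $z$-independent constants $c_3>0$ and a uniform invariant neighborhood $\mathcal{N}$ of $Z$ carrying the lower bound. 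Second, I must confine the sublevel set to $\mathcal{N}$: since $\psi_{G,d}$ is bounded away from $0$ outside any invariant neighborhood of $Z$ (again by compactness modulo the group), there is $r>0$ with $\psi_{G,d}^{-1}(\leq r)\subseteq\mathcal{N}$, completing the proof. Verifying the uniformity across the transversals $T_z$ — equivalently, the continuity and strict positivity of the smallest nonzero singular value of $\operatorname{D}\!f_G$ along $Z$ — is the step I expect to demand the most care.
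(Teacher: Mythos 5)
Your parts~\ref{thm:11112017:A} and~\ref{thm:11112017:B} are correct and more elementary than the paper's treatment: the paper writes each sublevel set as $K^{\operatorname{E}(n)}$ for a compact set~$K$, applies \Cref{thm:10112017}~\ref{thm:10112017:A} (resp.\ smoothness) on~$K$, and then extends the bounds by invariance of $\psi_{G,d}$ and $\|\nabla\psi_{G,d}\|$ under the Euclidean group, whereas your explicit formula $\nabla\psi_{G,d}(p)=\tfrac12\operatorname{D}\!f_G(p)^{\top}(f_G(p)-d)$ and the degree-four polynomial structure give the estimates directly, with explicit constants and no group action. (One small repair in~\ref{thm:11112017:B}: invariance under diagonal translations by itself only yields dependence on pairwise differences; that the derivatives are polynomials in the \emph{edge} differences follows instead from the factorization of~\cref{eq:potentialFunction} through the linear map $p\mapsto(p_j-p_i)_{ij\in E}$, which is visible from the formula.) For part~\ref{thm:11112017:C}, your proof shares the paper's skeleton --- a local gradient estimate near each point of $f_G^{-1}(d)$, finiteness of $f_G^{-1}(d)$ modulo $\operatorname{E}(n)$ (your connectivity and isolation claims are correct and are exactly what the paper establishes in the orbit space), and confinement of a small sublevel set inside the resulting invariant neighborhood --- but the local mechanism differs. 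The paper uses regularity of each $q\in f_G^{-1}(d)$ (\Cref{thm:IFT,thm:27112017}) to factor $\psi_{G,d}=g_d\circ\bar f_G$ through a chart of the manifold $f_G(U)$, applies \Cref{thm:10112017}~\ref{thm:10112017:B} \emph{once} to $g_d$ at the single point $\bar f_G(q)$, and pulls the estimate back via $\|\operatorname{D}\!\bar f_G(p)^{\top}v\|\geq c_3'\|v\|$; this delivers~\cref{eq:11112017:C} on a full ambient neighborhood of~$q$ in one step. Your Morse--Bott slicing applies \Cref{thm:10112017}~\ref{thm:10112017:B} separately on every normal slice~$T_z$, so it produces only slice-wise estimates with $z$-dependent constants and radii, and you must then prove uniformity over~$z$ --- precisely the step you flag as hardest, and precisely the step the paper's factorization sidesteps.

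That uniformity step is the one genuine gap in your write-up as it stands, but the reduction you propose is the right one and does close. By \Cref{thm:27112017}, every $z\in Z$ is a regular point of~$f_G$, so $\operatorname{rank}\operatorname{D}\!f_G$ is constant (equal to its global maximum) along~$Z$; on a set of matrices of fixed rank the smallest nonzero singular value is a continuous, strictly positive function, hence it is bounded below on a compact piece of~$Z$ that meets every orbit. Feeding this lower bound, together with the uniform derivative bounds from part~\ref{thm:11112017:B}, into a quantitative Taylor expansion around~$z$ yields slice estimates with a uniform constant and a uniform radius, and the nearest-point projection onto~$Z$ places every point of a uniform tube on some slice. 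With these details written out, your argument is a complete and genuinely different proof of part~\ref{thm:11112017:C}.
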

\begin{proof}
For the proof, we need some additional facts from differential geometry, which can be found in~\cite{LeeBook}. An isometry of~$\mathbb{R}^{n}$ is a map $T\colon\mathbb{R}^{n}\to\mathbb{R}^{n}$ such that $\|Ty-Tx\|=\|y-x\|$ for all $x,y\in\mathbb{R}^n$. It is known that the set~$\operatorname{E}(n)$ of all isometries of~$\mathbb{R}^{n}$ forms a Lie group, called the \emph{Euclidean group}. For each $T\in\operatorname{E}(n)$, we define $T^N\colon\mathbb{R}^{nN}\to\mathbb{R}^{nN}$ by $T^Np:=(Tp_1,\ldots,Tp_N)$ for every $p=(p_1,\ldots,p_N)\in\mathbb{R}^{nN}$. It is known that the map $\operatorname{E}(n)\times\mathbb{R}^{nN}\to\mathbb{R}^{nN}$, $(T,p)\mapsto{T^Np}$ is a smooth group action of~$\operatorname{E}(n)$ on~$\mathbb{R}^{nN}$.
For every subset~$S$ of~$\mathbb{R}^{nN}$, we let~$S^{\operatorname{E}(n)}$ denote the set of all~$T^Np$ with $p\in{S}$ and $T\in\operatorname{E}(n)$. In particular, for a single point $p\in\mathbb{R}^{nN}$, the set $p^{\operatorname{E}(n)}:=\{p\}^{\operatorname{E}(n)}$ is called the \emph{orbit} of~$p$. The set $\mathbb{R}^{nN}/\operatorname{E}(n)$ of all orbits endowed with the quotient topology is called the \emph{orbit space}. Note that~$\psi_{G,d}$ is \emph{invariant} under the action of~$\operatorname{E}(n)$, i.e., we have $\psi_{G,d}\circ{T^{N}}=\psi_{G,d}$ for every $T\in\operatorname{E}(n)$. It is easy to check that every sublevel set of~$\psi_{G,d}$ can be reduced to a compact set by isometries, i.e., for every $r>0$, there exists a compact subset~$K$ of~$\mathbb{R}^{nN}$ such that $\psi_{G,d}^{-1}(\leq{r})=K^{\operatorname{E}(n)}$.

To prove parts~\ref{thm:11112017:A} and~\ref{thm:11112017:B}, fix an arbitrary $r>0$. Then, there exists a compact subset~$K$ of~$\mathbb{R}^{nN}$ such that $\psi_{G,d}^{-1}(\leq{r})=K^{\operatorname{E}(n)}$. By \Cref{thm:10112017}~\ref{thm:10112017:A}, there exists $c_1>0$ such that~\cref{eq:11112017:A} holds for every $p\in{K}$. Note that the derivative of any $T\in\operatorname{E}(n)$ is an orthogonal transformation and therefore leaves the Euclidean norm invariant. By the chain rule, we obtain $\|(\nabla\psi_{G,d})\circ{T^N}\|=\|\nabla\psi_{G,d}\|$ for every $T\in\operatorname{E}(n)$, which implies that~\cref{eq:11112017:A} holds in fact for every $p\in{K^{\operatorname{E}(n)}}$. Let $l\geq{2}$ be an integer. Since~$\psi_{G,d}$ is smooth, there exists $c_2>0$ such that~\cref{eq:11112017:B} holds for every $p\in{K}$ and all $v_1,\ldots,v_l\in\mathbb{R}^n$. As for the gradient, it follows from the invariance of~$\psi_{G,d}$ under the action of~$\operatorname{E}(n)$, the chain rule, and the invariance of the Euclidean norm under orthogonal transformations that~\cref{eq:11112017:B} holds for every $p\in{K^{\operatorname{E}(n)}}$ and all $v_1,\ldots,v_l\in\mathbb{R}^n$.

For the rest of the proof, we suppose that~$G(q)$ is infinitesimally rigid for every $q\in{f_{G}^{-1}(d)}$. In the first step, we show that for every $q\in{f^{-1}(d)}$, there exist a neighborhood~$W$ of~$q$ in~$\mathbb{R}^{nN}$ and some constant $c_3>0$ such that \cref{eq:11112017:C} holds for every $p\in{W}$. Suppose that $q\in{f_{G}^{-1}(d)}$. By \Cref{thm:IFT,thm:27112017}, there exists an open neighborhood~$U$ of~$q$ in~$\mathbb{R}^{nN}$ such that the subset~$f_G(U)$ of~$\mathbb{R}^M$ is a smooth manifold of dimension $k:=\operatorname{rank}\operatorname{D}\!f_G(q)$. After possibly shrinking~$U$ around~$q$, we can find a parametrization $\phi\colon{V}\to{f_G(U)}$ for the entire manifold~$f_G(U)$. Then, $\bar{f}_G:=(\phi^{-1}\circ{f_G})|_{U}\colon{U}\to{V}$ is a smooth map with $\operatorname{rank}\operatorname{D}\!\bar{f}_G(q)=k$. Define a smooth function $g_d\colon{V}\to\mathbb{R}$ by $g_d(x):=\|\phi(x)-d\|^2/4$ for every $x\in{V}$. Then, the restriction of~$\psi_{G,d}$ to~$U$ equals $g_d\circ\bar{f}_G$, and by the chain rule, we obtain
\[
\nabla\psi_{G,d}(p) \ = \ \operatorname{D}\!\bar{f}_G(p)^{\top}\nabla{g_d(\bar{f}_G(p))}
\]
for every $p\in{U}$, where $\operatorname{D}\!\bar{f}_G(p)^{\top}\colon\mathbb{R}^{k}\to\mathbb{R}^{nN}$ denotes the adjoint of $\operatorname{D}\!\bar{f}_G(p)\colon\mathbb{R}^{nN}\to\mathbb{R}^{k}$ with respect to the Euclidean inner product. Since $p\mapsto{\operatorname{D}\!\bar{f}_G(p)^{\top}}$ is continuous and has full rank~$k$ at~$q$, there exist a neighborhood~$W$ of~$q$ in~$U$ and a constant $c_3'>0$ such that $\|\operatorname{D}\!\bar{f}_G(p)^{\top}v\|\geq{c_3'}\|v\|$ for every $p\in{W}$ and every $v\in\mathbb{R}^{k}$. In particular, this implies
\[
\|\nabla\psi_{G,d}(p)\| \ \geq \ c_3'\,\|\nabla{g_d(\bar{f}_G(p))}\|
\]
for every $p\in{W}$. Using $\phi(z)=d$ at $z:=\bar{f}_G(q)\in{V}$, a direct computation shows that $\operatorname{D}\!^2{g_d(z)}(v,v)=\|\operatorname{D}\!\phi(z)v\|^2/2$ for every $v\in\mathbb{R}^k$. Since $\operatorname{rank}\operatorname{D}\!\phi(z)=k$, it follows that the second derivative of~$g_d$ at~$z$ is positive definite. Because of \Cref{thm:10112017}~\ref{thm:10112017:B}, we can shrink~$W$ sufficiently around~$q$ and find some $c_3''>0$ such that
\[
\|\nabla{g_d(\bar{f}_G(p))}\|^2 \ \geq \ c_3''\,g_d(\bar{f}_G(p)) \ = \ c_{3}''\,\psi_{G,d}(p)
\]
for every $p\in{W}$. Thus, \cref{eq:11112017:C} holds for every $p\in{W}$ with $c_3:=(c_3')^2\,c_3''$.

Let $\pi\colon\mathbb{R}^{nN}\to\mathbb{R}^{nN}/\operatorname{E}(n)$ be the projection onto the orbit space. Let~$C$ be the complete graph with~$N$ vertices. Note that the edge maps~$f_C$ and~$f_G$ are continuous, and also invariant under the action of~$\operatorname{E}(n)$, i.e., we have $f_C\circ{T^{N}}=f_C$ and $f_G\circ{T^{N}}=f_G$ for every $T\in\operatorname{E}(n)$. Thus, there exist unique continuous maps $\tilde{f}_C,\tilde{f}_G\colon\mathbb{R}^{nN}/\operatorname{E}(n)\to\mathbb{R}^M$ such that $f_{C}=\tilde{f}_{C}\circ\pi$ and $f_{G}=\tilde{f}_{G}\circ\pi$ (see~\cite{LeeBook}). The assumption of rigidity means in the orbit space that for every orbit $\tilde{p}\in\tilde{f}_{G}^{-1}(d)$, there exists a neighborhood~$\tilde{U}$ of~$\tilde{p}$ in $\mathbb{R}^{nN}/\operatorname{E}(n)$ such that $\tilde{f}_G^{-1}(d)\cap\tilde{U}=\tilde{f}_C^{-1}(\tilde{f}_C(\tilde{p}))\cap\tilde{U}$. Since $\tilde{f}_G^{-1}(d)$ is compact, and since $\tilde{f}_C^{-1}(\tilde{f}_C(\tilde{p}))=\{\tilde{p}\}$, it follows that $\tilde{f}_G^{-1}(d)$ only consists of finitely many orbits. Thus, there exists a finite set $P\subseteq{f_{G}^{-1}(d)}$ such that $f_{G}^{-1}(d)=P^{\operatorname{E}(n)}$. Since~$P$ is finite, we obtain from the previous paragraph that there exist a neighborhood~$W$ of~$P$ in~$\mathbb{R}^{nN}$ and some constant $c_3>0$ such that \cref{eq:11112017:C} holds for every $p\in{W}$. Since both~$\psi_{G,d}$ and~$\|\nabla\psi_{G,d}\|$ are invariant under the action of~$\operatorname{E}(n)$, we conclude that \cref{eq:11112017:C} holds for every $p\in{W^{\operatorname{E}(n)}}$. The proof is complete, if we can show that there exists $r>0$ such that $\psi_{G,d}^{-1}(\leq{r})\subseteq{W^{\operatorname{E}(n)}}$. Since $\psi_{G,d}\colon\mathbb{R}^{nN}\to\mathbb{R}$ is continuous and invariant under the action of~$\operatorname{E}(n)$, there exists a unique continuous function $\tilde{\psi}_{G,d}\colon\mathbb{R}^{nN}/\operatorname{E}(n)\to\mathbb{R}$ such that $\psi_{G,d}=\tilde{\psi}_{G,d}\circ\pi$. Since the projection map~$\pi$ is open (see~\cite{LeeBook}), the set $\tilde{W}:=\pi(W)$ is a neighborhood of $\tilde{P}:=\pi(P)=\tilde{\psi}_{G,d}^{-1}(0)$ in $\mathbb{R}^{nN}/\operatorname{E}(n)$. Since~$\tilde{\psi}_{G,d}$ is continuous and has compact sublevel sets, there exists a sufficiently small $r>0$ such that $\tilde{\psi}_{G,d}^{-1}(\leq{r})\subseteq\tilde{W}$. Thus, $\psi_{G,d}^{-1}(\leq{r})\subseteq{W^{\operatorname{E}(n)}}$, which completes the proof.
\end{proof}
\begin{remark}\label{thm:nonCompactness}
In general, the noncompact set $\psi_{G,d}^{-1}(0)$ of global minima of~$\psi_{G,d}$ might have a complicated structure. However, the proof of \Cref{thm:11112017} reveals that under the assumption of infinitesimal rigidity, the set $\psi_{G,d}^{-1}(0)$ is simply the union of orbits of finitely many points in~$\mathbb{R}^{nN}$ under action of the Euclidean group. It therefore suffices to consider~$\psi_{G,d}$ in a small neighborhood of a single point of each orbit. A similar strategy is also applied in several other studies on formation shape control (see, e.g.,~\cite{Helmke2014,Mou2016}). The assumption of infinitesimal rigidity allows us to derive the lower bound~\cref{eq:11112017:C} for the gradient of~$\psi_{G,d}$ on a noncompact sublevel set. This estimate will play an important role in the proof of our main result.
\end{remark}

%--------------------------------------------------------------------------------------------------------------
%--------------------------------------------------------------------------------------------------------------
%--------------------------------------------------------------------------------------------------------------

\section{Formation control}\label{sec:formationControl}

\subsection{Problem description}
We consider a system of~$N$ point agents in~$\mathbb{R}^n$. For each $i=1,\ldots,N$, let $b_{i,1},\ldots,b_{i,n}\in\mathbb{R}^{n}$ be an orthonormal basis of~$\mathbb{R}^n$. We assume that the motion of agent $i\in\{1,\ldots,N\}$ is determined by the kinematic equations
\begin{equation}\label{eq:kinematicPoints}
\dot{p}_i \ = \ \sum_{k=1}^{n}u_{i,k}\,b_{i,k},
\end{equation}
where each~$u_{i,k}$ is a real-valued input channel to control the velocity into direction~$b_{i,k}$. The situation is depicted in \Cref{fig:agentSketch}. It is worth to mention that the directions~$b_{i,k}$ do not need to be known for an implementation of the control law that is presented in the next subsection. 
\begin{figure}
\centering\includegraphics{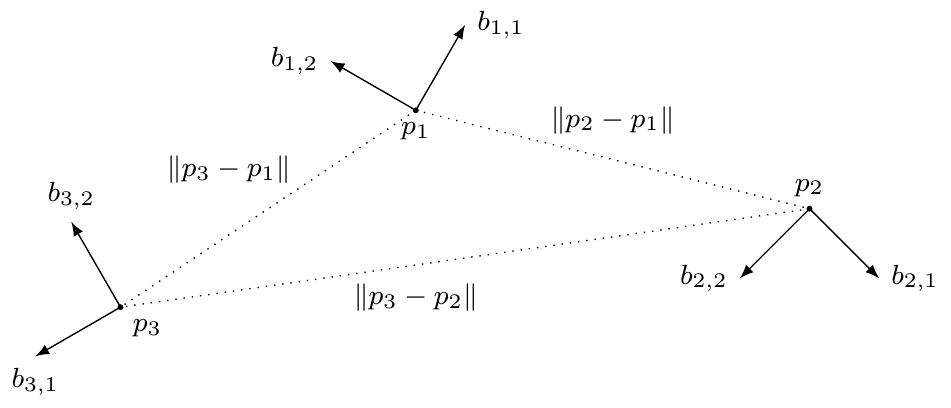}
\caption{A system of $N=3$ point agents in $\mathbb{R}^{n=2}$. Their current distances $\|p_j-p_i\|$ are indicated by dotted lines. The agents do not share information about a global coordinate system. Instead, each agent navigates with respect to its individual body frame, which is defined by the orthonormal velocity directions~$b_{i,k}$.}
\label{fig:agentSketch}
\end{figure}

Suppose that the agents are equipped with very primitive sensors so that they can only measure distances to certain other members of the team. These measurements are described by an (undirected) graph $G=(V,E)$; see \Cref{sec:infRig} for the definition. If there is an edge $ij\in{E}$ between agents $i,j\in{V}$, then it means that agent~$i$ can measure the Euclidean distance $\|p_j-p_i\|$ to agent~$j$ and vice versa. Note that the agents cannot measure relative positions $p_j-p_i$ but only distances. For each edge $ij\in{E}$, let $d_{ij}\geq{0}$ be a nonnegative real number, which is the \emph{desired distance} between agents~$i$ and~$j$. We assume that these distances are \emph{realizable} in~$\mathbb{R}^n$, i.e., there exists $p=(p_1,\ldots,p_N)\in\mathbb{R}^{nN}$ such that $\|p_j-p_i\|=d_{ij}$ for every $ij\in{E}$. We are interested in a distributed and distance-only control law that steers the multi-agent system into such a target formation. The control law that we propose in \Cref{sec:controlLaw} requires only distance measurements and can be implemented directly in each agent's local coordinate frame, which is independent of any global coordinate frame.

We remark that, in the present paper, we assume an undirected graph for modeling a multi-agent formation system, as is often commonly assumed in the literature on multi-agent coordination control (see the surveys~\cite{Oh2015, Cao2013}). This assumption is motivated by various application scenarios. For instance, in practice  agents are often equipped with homogeneous sensors that have the same sensing ability, e.g., same sensing ranges for range sensors. Therefore, it is justifiable to assume bidirectional sensing (described by an undirected graph) in modeling a multi-agent system. Undirected graph also enables a gradient-based control law for stabilizing formation shapes, which may not be possible for general directed graphs. Extensions of the current results to directed graphs will be a topic for future research.

%--------------------------------------------------------------------------------------------------------------
%--------------------------------------------------------------------------------------------------------------

\subsection{Control law and main statement}\label{sec:controlLaw}
For each $i=1,\ldots,N$, define a local potential function $\psi_i\colon\mathbb{R}^{nN}\to\mathbb{R}$ by
\begin{equation}\label{eq:localPotential}
\psi_i(p) \ := \ \frac{1}{4}\sum_{j\in{V}\!\colon\!{ij}\in{E}}\big(\|p_j-p_i\|^2-d_{ij}^2\big)^2
\end{equation}
for every $p=(p_1,\ldots,p_N)\in\mathbb{R}^{nN}$. Note that for the computation of the value of $\psi_i$, agent~$i$ only needs to measure the distances $\|p_j-p_i\|$ to its neighbors $j\in{V}$ with $ij\in{E}$. Choose functions $h_{1},h_{2}\colon\mathbb{R}\to\mathbb{R}$ with the following properties for $\nu=1,2$:
\begin{enumerate}[label=(P\roman*)]
	\item\label{def:hNu:i} $h_{\nu}(y)=0$ for every $y\leq{0}$,
	\item\label{def:hNu:ii} $h_{\nu}$ is bounded and of class~$C^2$ on $(0,\infty)$,
	\item\label{def:hNu:iii} $h_{\nu}(y)/y$ remains bounded as $y\downarrow{0}$,
	\item\label{def:hNu:iv} $h_{\nu}'(y)$ remains bounded as $y\downarrow{0}$,
	\item\label{def:hNu:v} $h_{\nu}''(y)y$ remains bounded as $y\downarrow{0}$,
	\item\label{def:hNu:vi} there exist $r,c>0$ such that
	\begin{equation}\label{eq:hLie}
	[h_1,h_2](y) \ := \ h_2'(y)h_1(y)-h_1'(y)h_2(y) \ \leq \ -c\,{y}
	\end{equation}
	holds for every $y\in(0,r]$,
\end{enumerate}
where $h_{\nu}'$ and $h_{\nu}''$ denote the first and second derivative of~$h_{\nu}$ on $(0,\infty)$, respectively.
\begin{example}\label{exm:hNu}
Let $A\colon[0,\infty)\to\mathbb{R}$ be a bounded function of class~$C^2$ such that $A(0)=0$, and $A'(y)>0$ for every $y\geq{0}$. For instance, $A(y)=\tanh{y}$ or also $A(y)=y/(1+y)$ are two admissible choices. If we define $h_{1}(y):=h_{2}(y):=0$ for $y\leq{0}$ and
\begin{subequations}\label{eq:hNu}
\begin{align}
h_{1}(y) & \ := \ A(y)\,\sin(\log{y}), \\
h_{2}(y) & \ := \ A(y)\,\cos(\log{y})
\end{align}
\end{subequations}
for $y>0$, then a direct computation shows that the functions~$h_1,h_2$ satisfy conditions~\ref{def:hNu:i}-\ref{def:hNu:vi} with $[h_1,h_2](y)=-A(y)^2/y$ for every~$y>0$.
\end{example}
\begin{remark}
The assumptions~\ref{def:hNu:i}-\ref{def:hNu:vi} on~$h_1,h_2$ are imposed to ensure the existence and boundedness of certain Lie derivatives and Lie brackets, which appear later in the analysis of the closed-loop system. These boundedness properties are derived in \Cref{sec:boundednessLemmas}.
\end{remark}

For $i=1,\ldots,N$, and $k=1,\ldots,n$, let~$\omega_{i,k}$ be~$nN$ pairwise distinct positive real numbers, and define $u_{(i,k,1)},u_{(i,k,2)}\colon\mathbb{R}\to\mathbb{R}$ by
\begin{subequations}\label{eq:sinusoids}
\begin{align}
u_{(i,k,1)}(t) & \ := \ \sqrt{\omega_{i,k}}\,\cos(\omega_{i,k}t + \varphi_{i,k}), \\
u_{(i,k,2)}(t) & \ := \ \sqrt{\omega_{i,k}}\,\sin(\omega_{i,k}t + \varphi_{i,k}).
\end{align}
\end{subequations}
with possible phase shifts $\varphi_{i,k}\in\mathbb{R}$.
\begin{example}\label{exm:omegaIK}
Let~$\omega$ be a positive real number, and let
\begin{equation}\label{eq:omegaIK}
\omega_{i,k} \ := \omega\,((i-1)n + k)
\end{equation}
for $i=1,\ldots,N$, and $k=1,\ldots,n$. This defines~$nN$ pairwise distinct positive real numbers~$\omega_{i,k}$.
\end{example}
\begin{remark}
The choice of pairwise distinct frequency coefficients~$\omega_{i,k}$ for the sinusoids~$u_{(i,k,\nu)}$ has the purpose to excite certain Lie brackets of vector fields, which are directly linked to the bracket in~\cref{eq:hLie} of~$h_1,h_2$. This effect is revealed by a suitable averaging analysis in \Cref{sec:averagingLemmas}.
\end{remark}

We propose the control law
\begin{equation}\label{eq:controlLaw}
u_{i,k} \ = \ u_{(i,k,1)}(t)\,h_{1}\big(\psi_i(p)\big) + u_{(i,k,2)}(t)\,h_{2}\big(\psi_i(p)\big)
\end{equation}
for $i=1,\ldots,N$, and $k=1,\ldots,n$.
\begin{remark}
An implementation of the control law~\cref{eq:controlLaw} requires that each agent knows the desired inter-agent distances to its neighbors, and its own pairwise distinct frequencies (and possible phase shifts). Such information can be embedded into the memory of each agent prior to an implementation of the control law. Also, each agent needs to measure the current inter-agent distances (in contrast to relative positions, as assumed in most papers on formation shape control) relative to its neighbors in order to compute the value of its local potential~\cref{eq:localPotential}. The setting of such a control scenario is  common in most distributed control laws, which is acknowledged by the term `centralized design, distributed implementation', which does not contradict with the principle of distributed control (see e.g., the surveys~\cite{Cao2013,Oh2015}). Therefore, the proposed control law is fully distributed.

It is also important to note that we allow arbitrary phase shifts~$\varphi_{i,k}$ in the sinusoids~\cref{eq:sinusoids}. The phase shifts for one agent are not assumed to be known to the other members of the team. In particular, this means that the control law~\cref{eq:controlLaw} requires no time synchronization among the agents. Moreover, since we merely assume that the frequency coefficients~$\omega_{i,k}$ are pairwise distinct, it is not necessary that the sinusoids have a common period.
\end{remark}

It is shown later in \Cref{thm:12112017}~\ref{thm:12112017:A} that for every $i\in\{1,\ldots,N\}$ and every $\nu\in\{1,2\}$, the function $h_{\nu}\circ\psi_i$ is of class~$C^1$. It therefore follows from standard theorems for ordinary differential equations that system~\cref{eq:kinematicPoints} under the control law~\cref{eq:controlLaw} has a unique maximal solution for any initial condition. These solutions do not have a finite escape time because property~\ref{def:hNu:ii} ensures that~\cref{eq:controlLaw} is bounded. In summary, we have the following result.
\begin{proposition}\label{thm:ExistenceUniqueness}
For any initial condition, system~\cref{eq:kinematicPoints} under control law~\cref{eq:controlLaw} has a unique global solution, which we call a \emph{trajectory} of~\cref{eq:kinematicPoints} under~\cref{eq:controlLaw}.
\end{proposition}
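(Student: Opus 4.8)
The plan is to read the closed loop obtained by substituting~\cref{eq:controlLaw} into~\cref{eq:kinematicPoints} as a single nonautonomous ordinary differential equation $\dot p = F(t,p)$ on the state space $\mathbb{R}^{nN}$, where the $i$-th position block of $F(t,p)$ is $\sum_{k=1}^n\big(u_{(i,k,1)}(t)\,h_1(\psi_i(p)) + u_{(i,k,2)}(t)\,h_2(\psi_i(p))\big)b_{i,k}$. Two properties of $F$ then yield the claim through standard results: local Lipschitz regularity in the state variable gives a unique maximal solution, and a uniform bound on $F$ rules out finite escape time and thereby upgrades the maximal solution to a global one. I would invoke the Picard--Lindel\"of theorem for the former and the standard maximal-interval extension theorem for the latter.

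For local existence and uniqueness, note that the time dependence of $F$ enters only through the sinusoids $u_{(i,k,\nu)}$ from~\cref{eq:sinusoids}, which are smooth and, on any compact time interval, bounded; hence $F$ is continuous in $(t,p)$. The state dependence enters only through the compositions $h_\nu\circ\psi_i$, and by \Cref{thm:12112017}~\ref{thm:12112017:A} each such composition is of class $C^1$ on all of $\mathbb{R}^{nN}$, hence locally Lipschitz in $p$. Since finite sums and products of functions that are continuous in $t$ and locally Lipschitz in $p$ (uniformly in $t$ over compact time intervals) inherit the same regularity, $F$ satisfies the hypotheses of the Picard--Lindel\"of theorem, which provides, for every initial condition, a unique solution on a maximal open interval.

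To pass from a maximal to a global solution, I would establish a uniform bound on $F$. Property~\ref{def:hNu:ii} makes each $h_\nu$ bounded on $(0,\infty)$, and since $\psi_i\geq 0$ while $h_\nu\equiv 0$ on $(-\infty,0]$ by~\ref{def:hNu:i}, the quantities $h_\nu(\psi_i(p))$ are bounded uniformly in $p\in\mathbb{R}^{nN}$; the sinusoids are bounded by $\sqrt{\omega_{i,k}}$. Because $b_{i,1},\ldots,b_{i,n}$ form an orthonormal basis, $\|\dot p_i\|^2 = \sum_{k=1}^n u_{i,k}^2$ is then bounded by a constant independent of $t$ and $p$, so $F$ is globally bounded. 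Consequently, along any maximal solution the state satisfies $\|p(t)\|\leq\|p(t_0)\|+C\,|t-t_0|$ and thus remains in a compact set over every bounded time interval. The extension theorem then forces the maximal interval to be all of $\mathbb{R}$, giving the asserted unique global solution.

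The only point that is not entirely routine is the $C^1$ regularity of $h_\nu\circ\psi_i$ across the set $\{\psi_i = 0\}$, where $\psi_i$ vanishes together with its gradient and $h_\nu$ is defined only piecewise; this is precisely the reason for imposing conditions~\ref{def:hNu:i}--\ref{def:hNu:v} on $h_1,h_2$, and it is treated separately in \Cref{thm:12112017}~\ref{thm:12112017:A}. Granting that statement, as the discussion preceding the proposition already does, the remainder of the argument is textbook ODE theory.
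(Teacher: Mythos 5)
Your proposal is correct and follows essentially the same route as the paper: the paper also deduces the unique maximal solution from the $C^1$ regularity of $h_{\nu}\circ\psi_i$ established in \Cref{thm:12112017}~\ref{thm:12112017:A}, and rules out finite escape time via the boundedness of the control law guaranteed by property~\ref{def:hNu:ii}. Your write-up merely spells out the standard ODE steps (Picard--Lindel\"of and the maximal-interval extension argument) that the paper leaves implicit.
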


To state our main result, we introduce the global potential function $\psi\colon\mathbb{R}^{nN}\to\mathbb{R}$ given by
\begin{equation}\label{eq:gloabalPotential}
\psi(p) \ := \ \frac{1}{4}\,\sum_{ij\in{E}}\big(\|p_j-p_i\|^2-d_{ij}^2\big)^2.
\end{equation}
For every $r>0$, we define the sublevel set
\[
\psi^{-1}(\leq{r}) \ := \ \{ p\in\mathbb{R}^{nN} \ | \ \psi(p)\leq{r} \}.
\]
Note that the zero set of~$\psi$,
\begin{equation}\label{eq:setOfDesiredStates}
\psi^{-1}(0) \ = \ \{ (p_1,\ldots,p_N)\in\mathbb{R}^{nN} \ | \ \forall{ij\in{E}}\colon\|p_j-p_i\|=d_{ij} \},
\end{equation}
is the \emph{set of desired formations}. Since we assume that the distances~$d_{ij}$ are realizable in~$\mathbb{R}^n$, the set \cref{eq:setOfDesiredStates} is not empty.
\begin{theorem}\label{thm:mainResult}
Suppose that for every point~$p$ of~\cref{eq:setOfDesiredStates}, the framework~$G(p)$ is infinitesimally rigid. Then, there exist constants $c,r>0$ such that for every $t_0\in\mathbb{R}$, and every $p_0\in\psi^{-1}(\leq{r})$, the trajectory~$\gamma$ of system~\cref{eq:kinematicPoints} under control law~\cref{eq:controlLaw} with initial condition $\gamma(t_0)=p_0$ converges to some point of~\cref{eq:setOfDesiredStates}, and the estimate
\begin{equation}\label{eq:speedOfConvergence}
\psi(\gamma(t)) \ \leq \ \frac{2\,\psi(p_0)}{1+c\,\psi(p_0)\,(t-t_0)}
\end{equation}
holds for every $t\geq{t_0}$.
\end{theorem}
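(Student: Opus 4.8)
The plan is to analyze the closed loop by passing to its Lie-bracket average and then to use the potential $\psi$ as a Lyapunov function for the averaged system, exploiting the gradient estimates of \Cref{thm:11112017}. First I would rewrite \cref{eq:kinematicPoints}--\cref{eq:controlLaw} as the input-affine system $\dot p=\sum_{i,k}\big(u_{(i,k,1)}(t)\,X_{i,k}(p)+u_{(i,k,2)}(t)\,Y_{i,k}(p)\big)$, where $X_{i,k}:=(h_1\circ\psi_i)\,b_{i,k}$ and $Y_{i,k}:=(h_2\circ\psi_i)\,b_{i,k}$ are the vector fields obtained by placing $b_{i,k}$ in the $i$-th block; that these are of class~$C^1$ follows from properties~\ref{def:hNu:i}--\ref{def:hNu:v}. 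Because the amplitudes are $\sqrt{\omega_{i,k}}$ and the excitation frequencies $\omega_{i,k}$ are pairwise distinct, an averaging/Lie-bracket result of the type to be developed later in the paper identifies the associated average system. Since $b_{i,k}$ is constant, a direct computation gives the Lie bracket $[X_{i,k},Y_{i,k}]=\big([h_1,h_2]\circ\psi_i\big)\,\langle\nabla\psi_i,b_{i,k}\rangle\,b_{i,k}$, with $[h_1,h_2]$ as in~\cref{eq:hLie}. For a cosine/sine pair of common frequency the relevant averaged product $\overline{u_{(i,k,2)}\int u_{(i,k,1)}}$ equals $1/2$ independently of the phase $\varphi_{i,k}$ (so that no time synchronization is needed), while all cross terms between different indices vanish precisely because the frequencies are pairwise distinct. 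Summing $\sum_{k}\langle\nabla\psi_i,b_{i,k}\rangle b_{i,k}=\nabla_{p_i}\psi_i$ over the orthonormal frame then yields the average system $\dot{\bar p}=\tfrac12\sum_{i=1}^{N}\big([h_1,h_2]\circ\psi_i(\bar p)\big)\,\nabla_{p_i}\psi_i(\bar p)$, where $\nabla_{p_i}\psi_i$ denotes the gradient of $\psi_i$ in the $i$-th block, embedded into $\mathbb{R}^{nN}$.

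Next I would show that $\psi$ decays quadratically along this average system. The crucial algebraic observations are that $\nabla_{p_i}\psi=\nabla_{p_i}\psi_i$ (the edges of $G$ incident to $i$ are exactly those appearing in $\psi_i$), that $\sum_i\psi_i=2\psi$, and that $\|\nabla\psi\|^2=\sum_i\|\nabla_{p_i}\psi_i\|^2$. Hence $\tfrac{d}{dt}\psi(\bar p)=\langle\nabla\psi,\dot{\bar p}\rangle=\tfrac12\sum_i\big([h_1,h_2]\circ\psi_i\big)\,\|\nabla_{p_i}\psi_i\|^2$, which is nonpositive on the relevant sublevel set by~\ref{def:hNu:vi}. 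To obtain a rate, I would use $[h_1,h_2]\circ\psi_i\le-c\,\psi_i$ from~\cref{eq:hLie}, then combine the upper bound $\psi_i\ge c_1^{-1}\|\nabla_{p_i}\psi_i\|^2$ (from \Cref{thm:11112017}~\ref{thm:11112017:A} applied to each local potential $\psi_i$, valid since $\psi_i\le 2\psi$) with the power-mean inequality $\sum_i\|\nabla_{p_i}\psi_i\|^4\ge N^{-1}\|\nabla\psi\|^4$ and finally the rigidity bound $\|\nabla\psi\|^2\ge c_3\psi$ of \Cref{thm:11112017}~\ref{thm:11112017:C}. This chain gives $\tfrac{d}{dt}\psi(\bar p)\le-\tilde c\,\psi(\bar p)^2$ for some $\tilde c>0$ on a sublevel set $\psi^{-1}(\le r)$, and integrating this scalar comparison inequality produces exactly the decay profile in~\cref{eq:speedOfConvergence}, with the factor $2$ in the numerator absorbing the averaging error relating the true trajectory $\gamma$ to $\bar p$.

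To upgrade convergence from the set~\cref{eq:setOfDesiredStates} to a single point, I would note that the averaged drift is integrable in time. Using~\ref{def:hNu:iii}--\ref{def:hNu:iv} one has $|[h_1,h_2]\circ\psi_i|\le C\psi_i$, and $\|\nabla_{p_i}\psi_i\|\le C\sqrt{\psi_i}$ again by \Cref{thm:11112017}~\ref{thm:11112017:A}, so that $\|\dot{\bar p}\|\lesssim\psi^{3/2}$. Along trajectories $\psi(\bar p(t))\lesssim 1/t$ by the previous step, whence $\|\dot{\bar p}(t)\|\lesssim t^{-3/2}$, which is integrable on $[t_0,\infty)$. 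Therefore $\bar p(t)$ has finite length and converges to a limit, which must lie in $\psi^{-1}(0)$ because $\psi(\bar p(t))\to0$; the same limit is inherited by $\gamma$ through the averaging estimate.

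The step I expect to be the main obstacle is the averaging itself. Unlike the usual high-amplitude, high-frequency arguments, the statement asserts convergence for arbitrary---in particular arbitrarily small---frequency coefficients $\omega_{i,k}$, so one cannot simply appeal to an $\omega\to\infty$ limit. The work lies in constructing the time-periodic coordinate change (a variation-of-constants transformation close to the identity) that converts the oscillatory system into the above average system plus a remainder, and in proving---uniformly in the initial time $t_0$ and uniformly on $\psi^{-1}(\le r)$---that this remainder is itself dominated by $\psi$ and hence destroys neither the quadratic decay nor the $L^1$-drift estimate. This uniform control is exactly what the boundedness properties~\ref{def:hNu:i}--\ref{def:hNu:v} together with the forthcoming averaging lemmas are meant to provide.
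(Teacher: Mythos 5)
Your overall architecture coincides with the paper's: the control-affine rewriting \cref{eq:controlAffineForm}, the identification of the Lie bracket system \cref{eq:limitSystem}, the decay rate $\dot\psi\le-\tilde c\,\psi^2$ obtained by combining \cref{eq:hLie} with \Cref{thm:11112017}~\ref{thm:11112017:A} and the rigidity bound \Cref{thm:11112017}~\ref{thm:11112017:C} (this is exactly \Cref{thm:14112017} plus \cref{eq:proof:01}), and the $\psi^{3/2}$ bound on the drift to upgrade set convergence to point convergence. However, the step you yourself flag as the main obstacle is the actual content of the theorem, and the way you propose to handle it would fail. You plan to compare the true trajectory $\gamma$ with the trajectory $\bar p$ of the averaged system over $[t_0,\infty)$, absorbing ``the averaging error'' into the factor $2$ and letting $\gamma$ ``inherit'' the limit of $\bar p$. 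No averaging theorem provides closeness of $\gamma$ to $\bar p$ uniformly on an infinite horizon for fixed, possibly small, frequencies: such estimates hold on compact intervals (or follow a posteriori from stability of the perturbed system, which is what is to be proved). Moreover, since the $\omega_{i,k}$ are only assumed pairwise distinct, the combined excitation need not be periodic, so the ``time-periodic coordinate change'' you invoke need not exist.

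The paper's proof never compares trajectories and never introduces $\bar p$. Instead, repeated integration by parts (the relations of \Cref{thm:15112017}) is applied directly to the scalar evolution $\tfrac{d}{dt}\psi(\gamma(t))$, yielding the \emph{exact} identity of \Cref{thm:16112017}: $\psi(\gamma(t))$ equals $\psi(\gamma(t_0))$ plus the integral of $Y\psi$ along $\gamma$ itself, plus boundary terms $D_1\psi$ and an integrated remainder $D_2\psi$ --- valid for every frequency choice and all times. The theorem then follows because these remainders vanish to \emph{higher order} in $\psi$ than the terms they compete against: \Cref{thm:12112017,thm:13112017} (which is where properties \ref{def:hNu:i}--\ref{def:hNu:v} enter) give $|D_1\psi|\le c_1\psi^{3/2}$ against the boundary term $\psi$, and $|D_2\psi|\le c_2\psi^{5/2}$ against the averaged integrand $-c_Y\psi^2$, so on a small sublevel set the integral inequality \cref{eq:proof:03} closes and yields \cref{eq:speedOfConvergence}. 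Your phrase ``the remainder is dominated by $\psi$'' is not sharp enough to run this argument: a remainder integrand of order $\psi$ or even $\psi^2$ (with unknown constant) would destroy the conclusion; the exponents $3/2$ and $5/2$ are what make local asymptotic (rather than practical) stability possible at arbitrary frequencies. Finally, convergence to a single point is also proved in the paper for $\gamma$ directly, by the same integration-by-parts device applied to the state equation together with the bounds of \Cref{thm:13112017}~\ref{thm:13112017:A}, rather than for $\bar p$ with the limit transferred by an unproven averaging estimate.
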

A detailed proof of \Cref{thm:mainResult} is presented in \Cref{sec:proof}. At this point, we only indicate the reason why the set~\cref{eq:setOfDesiredStates} becomes locally uniformly asymptotically stable for system~\cref{eq:kinematicPoints} under control law~\cref{eq:controlLaw}. Note that the closed-loop system is an ordinary differential equation in the product space~$\mathbb{R}^{nN}$, which consists of the coupled differential equations
\begin{equation}\label{eq:Duerr1}
\dot{p}_i \ = \ \sum_{k=1}^{n}\sum_{\nu=1}^{2}u_{(i,k,\nu)}(t)\,h_{\nu}(\psi_i(p))\,b_{i,k}
\end{equation}
in~$\mathbb{R}^n$ for $i=1,\ldots,N$. One can interpret the right-hand side of~\cref{eq:Duerr1} as a linear combination of the state dependent maps $p\mapsto{h_{\nu}(\psi_i(p))}\,b_{i,k}$ with time-varying coefficient functions~$u_{(i,k,\nu)}$. When we consider the closed-loop system in the product space, each of the maps $p\mapsto{h_{\nu}(\psi_i(p))}\,b_{i,k}$ defines a vector field~$X_{(i,k,\nu)}$ on~$\mathbb{R}^{nN}$. The analysis in \Cref{sec:proof} will show that the trajectories of \cref{eq:Duerr1} are driven into directions of certain Lie brackets of the vector fields~$X_{(i,k,\nu)}$ as long as the system state is sufficiently close to the set~\cref{eq:setOfDesiredStates}. To be more precise, the particular choice of the sinusoids~$u_{(i,k,\nu)}$ with pairwise distinct frequencies~$\omega_{i,k}$ causes the trajectories of~\cref{eq:Duerr1} to follow Lie brackets of the form $[X_{(i,k,1)},X_{(i,k,2)}]$. The ordinary differential equation in~$\mathbb{R}^{nN}$ with the sum of all Lie brackets $\frac{1}{2}[X_{(i,k,1)},X_{(i,k,2)}]$ on the right-hand side is referred to as the corresponding \emph{Lie bracket system}~\cite{Duerr2013}. A direct computation shows that the Lie bracket system is given by the coupled differential equations
\begin{equation}\label{eq:limitSystem}
\dot{p}_i \ = \ \frac{1}{2}\,[h_1,h_2](\psi_i(p))\,\nabla_{p_i}\psi(p)
\end{equation}
in~$\mathbb{R}^n$ for $i=1,\ldots,N$, where $\nabla_{p_i}\psi\colon\mathbb{R}^{nN}\to\mathbb{R}^n$ is the gradient of the global potential function~$\psi$ with respect to the $i$th position vector. Because of property~\ref{def:hNu:vi}, we have $[h_1,h_2](y)<0$ for $y>0$ close to~$0$. Thus, in a neighborhood of~\cref{eq:setOfDesiredStates}, the system state of~\cref{eq:limitSystem} is constantly driven into a descent direction of~$\psi$. The assumption of infinitesimal rigidity ensures that the decay of~$\psi$ along trajectories of~\cref{eq:limitSystem} is sufficiently fast. Since the trajectories of~\cref{eq:Duerr1} approximate the behavior of~\cref{eq:limitSystem} in a neighborhood of~\cref{eq:setOfDesiredStates}, this in turn implies that also the value of~$\psi$ along trajectories of~\cref{eq:kinematicPoints} under~\cref{eq:controlLaw} decays on average. The above strategy is closely related to  several other studies on Lie bracket approximations. We will discuss this relation in \Cref{sec:discussion}.

\begin{remark}\label{rmk:domainOfAttraction}
We emphasize that \Cref{thm:mainResult} guarantees uniform asymptotic stability only in a certain neighborhood of the set~\cref{eq:setOfDesiredStates} of desired formations. The size of the domain of attraction $\psi^{-1}(\leq{r})$ is characterized by the real number ${r>0}$. The value of~$r$ depends on the choice of the functions~$h_{\nu}$ and on the frequency coefficients~$\omega_{i,k}$. As a general rule one can say that the domain of attraction increases if the~$\omega_{i,k}$ are large and also their distances $|\omega_{i,k}-\omega_{i',k'}|$ are large. This property can be ensured by choosing the~$\omega_{i,k}$ as in \Cref{exm:omegaIK} with a large number $\omega>0$. The reader is referred to \Cref{thm:remainderContributions} and to the discussion in \Cref{sec:discussion} for more details. It is an open question whether the domain of attraction of \cref{eq:kinematicPoints} under~\cref{eq:controlLaw} can exceed the domain of attraction of the corresponding Lie bracket system \cref{eq:limitSystem} for a suitable choice of the~$h_{\nu}$ and the~$\omega_{i,k}$. Note that a gradient-based control law can lead to undesired equilibria at stationary points of the potential function.
\end{remark}
\begin{figure}
\centering\includegraphics{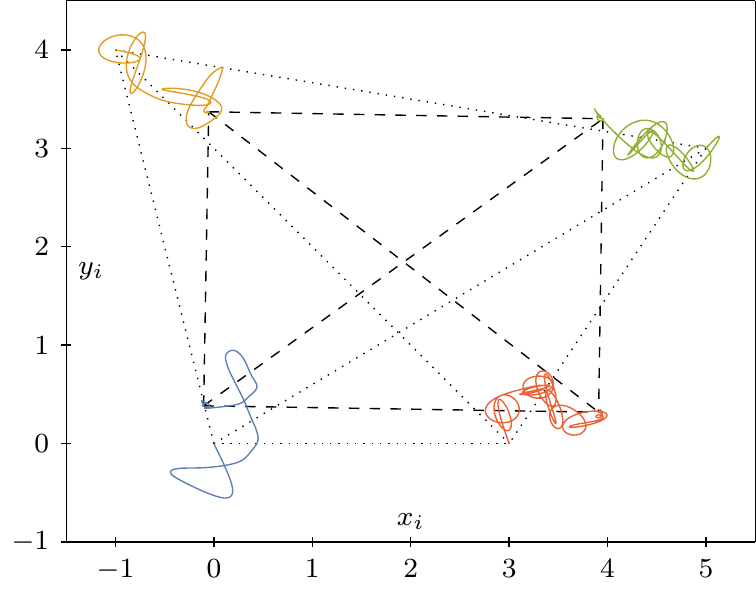}
\caption{Simulation on stabilization control of a four-agent rectangular formation shape. We denote the positions by $p_i=(x_i,y_i)\in\mathbb{R}^2$ for $i=1,\ldots,4$. The initial formation is indicated by dotted lines, and the finial formation is indicated by dashed lines.}
\label{fig:simulation2D}
\end{figure}

\subsection{Simulation examples}\label{sec:simulations}
In this subsection, we provide two simulations to demonstrate the behavior of~\cref{eq:kinematicPoints} under~\cref{eq:controlLaw}. We consider a rectangular formation shape in two dimensions and a double tetrahedron formation shape in three dimensions. One can check that the corresponding frameworks are infinitesimally rigid by means of the rank condition for the derivative of the edge map in~\cite{Asimow1979}. The same formations are also considered in~\cite{Sun2016} for system~\cref{eq:kinematicPoints} under the well-established negative gradient control law. Note that in contrast to the present paper, \textit{relative position measurements} are required in~\cite{Sun2016} to stabilize the desired formation shapes.

Our first example is a system of $N=4$ point agents in the Euclidean space of dimension $n=2$. For $i=1,\ldots,N$, the orthonormal velocity vectors of agent~$i$ in~\cref{eq:kinematicPoints} are given by $b_{i,1}=(\cos\phi_i,\sin\phi_i)$ and $b_{i,2}=(-\sin\phi_i,\cos\phi_i)$, where $\phi_i=i\pi/3$. We let~$G$ be the complete graph of~$N$ nodes. This means that each agent can measure the distances to all other members of the team. The common goal of the agents is to reach a rectangular formation with desired distances $d_{12}=d_{34}=3$, $d_{23}=d_{14}=4$, and $d_{13}=d_{24}=5$. The initial conditions are given by $p_1(0)=(0,0)$, $p_2(0)=(-1,4)$, $p_3(0)=(5,3)$, and $p_4(0)=(3,0)$. As in \Cref{exm:hNu}, we define the functions~$h_1,h_2$ by~\cref{eq:hNu}, where $A:=\tanh$. The frequency coefficients~$\omega_{i,k}$ are chosen as in \Cref{exm:omegaIK} with a positive real number~$\omega$. For the sake of simplicity, the phase shifts~$\varphi_{i,k}$ of the sinusoids are all set equal to zero. It turns out that the initial positions are not in the domain of attraction if we choose $\omega=1$. As indicated in \Cref{rmk:domainOfAttraction}, the domain of attraction becomes larger when we increase~$\omega$. The trajectories for $\omega=7$ are shown in \Cref{fig:simulation2D}.

In the second example, we consider a system of $N=5$ point agents in the Euclidean space of dimension $n=3$. For $i=1,\ldots,N$, the orthonormal velocity vectors of agent~$i$ in~\cref{eq:kinematicPoints} are given by $b_{i,1}=(\sin\theta_i\cos\phi_i,\sin\theta_i\sin\phi_i,\cos\theta_i)$, $b_{i,2}=(-\sin\phi_i,\cos\phi_i,0)$, and $b_{i,3}=(-\cos\theta_i\cos\phi_i,-\cos\theta_i,\sin\theta_i)$, where $\phi_i=i\pi/3$ and $\theta_i=i\pi/6$. We let~$G$ be the graph that originates from the complete graph of~$N$ nodes by removing the edge between the nodes~$4$ and~$5$. The common goal of the agents is to reach a formation shape of a double tetrahedron with desired distances $d_{ij}=2$ for every edge~$ij$ of~$G$. The initial conditions are given by $p_1(0)=(0,-1.0,0.5)$, $p_2(0)=(1.8,1.6,-0.1)$, $p_3(0)=(-0.2,1.8,0.05)$, $p_4(0)=(1.2,1.9,1.7)$ and $p_5(0)=(-1.0,-1.5,-1.2)$. The functions~$h_{\nu}$, the frequency coefficients~$\omega_{i,k}$, and the phase shifts~$\varphi_{i,k}$ are chosen as in the first example. Again, the initial positions are not within the domain of attraction of~\cref{eq:kinematicPoints} under~\cref{eq:controlLaw} for $\omega=1$. However, for $\omega=7$, one can see in \Cref{fig:simulation3D} that the trajectories converge to the desired formation shape.
\begin{figure}
\centering\includegraphics{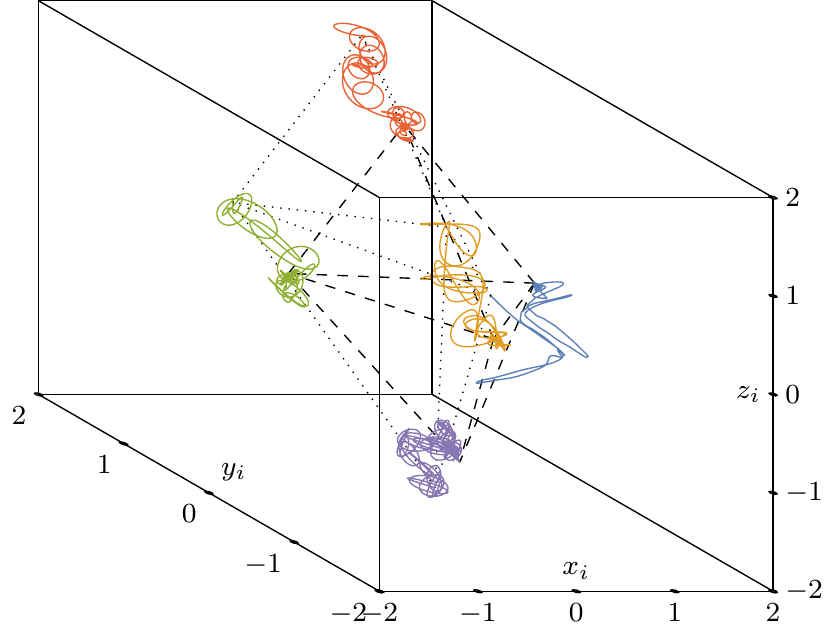}
\caption{Simulation on stabilization control of a double tetrahedron formation. We denote the positions by $p_i=(x_i,y_i,z_i)\in\mathbb{R}^3$ for $i=1,\ldots,5$. The initial formation is indicated by dotted lines, and the finial formation is indicated by dashed lines.}
\label{fig:simulation3D}
\end{figure}

One may interpret the oscillatory trajectories in the simulations as follows. Each agent constantly explores how small changes of its current position influences the value of its local potential function~$\psi_i$. This way an agent obtains gradient information. On average it leads to a decay of all local potential functions. Sufficiently high oscillations are necessary in our approach to ensure that every agent can explore its neighborhood properly. If the value of~$\psi_i$ is small, then the terms $\sin(\log\psi_i)$ and $\cos(\log\psi_i)$ in~\cref{eq:hNu} induce sufficiently high oscillations. When~$\psi_i$ is not small, then an increase of the global frequency parameter~$\omega$ can compensate the lack of oscillations. It is clear that the energy effort to implement~\cref{eq:controlLaw} is much larger than for a gradient-based control law. This is in some sense the price that we have to pay when we reduce the amount of utilized information from the gradient of~$\psi_i$ to the values of~$\psi_i$.

%--------------------------------------------------------------------------------------------------------------
%--------------------------------------------------------------------------------------------------------------
%--------------------------------------------------------------------------------------------------------------

\section{Local asymptotic stability analysis of the closed-loop system}\label{sec:proof}
The aim of this section is to prove \Cref{thm:mainResult}. In the first step, we rewrite system~\cref{eq:kinematicPoints} under control law~\cref{eq:controlLaw} as a control-affine system under open-loop controls. For this purpose, we have to introduce a suitable notation. Recall that, for every $i\in\{1,\ldots,n\}$, the velocity directions $b_{i,1},\ldots,b_{i,n}\in\mathbb{R}^n$ in~\cref{eq:kinematicPoints} are assumed to be an orthonormal basis of~$\mathbb{R}^n$. For each $i\in\{1,\ldots,N\}$ and each $k\in\{1,\ldots,n\}$, define a constant vector field $B_{i,k}\colon\mathbb{R}^{nN}\to\mathbb{R}^{nN}$ by $B_{i,k}(p):=(0,\ldots,0,b_{i,k},0,\ldots,0)$, where $b_{i,k}\in\mathbb{R}^n$ is at the $k$th position. It is clear that the vectors $B_{i,k}(p)$ form an orthonormal basis of~$\mathbb{R}^{nN}$ at any $p\in\mathbb{R}^{nN}$. As an abbreviation, we define an indexing set~$\Lambda$ to be the set of all triples $(i,k,\nu)$ with $i\in\{1,\ldots,N\}$, $k\in\{1,\ldots,n\}$, and $\nu\in\{1,2\}$. For each $m=(i,k,\nu)\in{M}$, define a vector field $X_m\colon\mathbb{R}^{nN}\to\mathbb{R}^{nN}$ by
\begin{equation}\label{eq:newVectorFields}
X_{m}(p) \ := \ h_{\nu}(\psi_i(p))\,B_{i,k}(p).
\end{equation}
When we insert~\cref{eq:controlLaw} into~\cref{eq:kinematicPoints}, the closed-loop system can be written as the control-affine system 
\begin{equation}\label{eq:controlAffineForm}
\dot{p} \ = \ \sum_{m\in\Lambda}u_m(t)\,X_m(p)
\end{equation}
with control vector fields~$X_m$ and open-loop controls~$u_m$.

\subsection{Boundedness properties}\label{sec:boundednessLemmas}
In this subsection, we derive suitable boundedness properties of (iterated) Lie derivatives of the global potential function~$\psi$ along the control vector fields $X_m$ in~\cref{eq:controlAffineForm}. These boundedness properties will ensure in the proof of \Cref{thm:mainResult} in \Cref{sec:proofOfTheorem} that certain remainder terms become small when the agents are close to the set~\cref{eq:setOfDesiredStates} of target formations.

Let~$W_1,W_2$ be subsets of~$\mathbb{R}^k$, and let $W$ be a subset of the (possibly empty) intersection of~$W_1,W_2$. Let $b\colon{W_1}\to\mathbb{R}$ be a nonnegative function. For the sake of convenience, we introduce the following terminology. We say that a function $f\colon{W_2}\to\mathbb{R}$ is \emph{bounded by a multiple of~$b$ on~$W$} if there exists $c>0$ such that $|f(x)|\leq{c\,b(x)}$ for every $x\in{W}$. We say that a vector field $X\colon{W_2}\to\mathbb{R}^k$ is \emph{bounded by a multiple of~$b$ on~$W$} if there exists $c>0$ such that $\|X(x)\|\leq{c\,b(x)}$ for every $x\in{W}$. For a map~$A$ on~$W_2$, which assigns every point of~$W_2$ to a bilinear form $\mathbb{R}^k\times\mathbb{R}^k\to\mathbb{R}$, we say that~$A$ is \emph{bounded by a multiple of~$b$ on~$W$} if there exists $c>0$ such that $|A(x)(v,w)|\leq{c\,b(x)\|v\|\|w\|}$ for every $x\in{W}$ and all $v,w\in\mathbb{R}^k$.

For every $i\in\{1,\ldots,N\}$, and every $r>0$, we define the sublevel set
\[
\psi_i^{-1}(\leq{r}) \ := \ \{ p\in\mathbb{R}^{nN} \ | \ \psi_i(p)\leq{r} \}
\]
where~$\psi_i$ is the local potential function~\cref{eq:localPotential} of agent~$i$. On the other hand, we have defined the global potential function~$\psi$ in~\cref{eq:gloabalPotential} for the entire multi-agent system. It follows directly from the definitions that, for every $i\in\{1,\ldots,N\}$ and every $k\in\{1,\ldots,n\}$, the Lie derivatives of~$\psi_i$ and~$\psi$ along the vector field~$B_{i,k}$ in~\cref{eq:newVectorFields} coincide, i.e., $B_{i,k}\psi=B_{i,k}\psi_i$.
\begin{lemma}\label{thm:12112017}
Let $m=(i,k,\nu)\in\Lambda$ and let $r>0$.
\begin{enumerate}[label=(\alph*)]
	\item\label{thm:12112017:A} The function $h_{\nu}\circ\psi_i$ is of class~$C^1$ and the following boundedness properties hold:
	\begin{enumerate}[label=(\roman*)]
		\item\label{thm:12112017:A:I} $h_{\nu}\circ\psi_i$ is bounded by a multiple of~$\psi_i$ on $\psi_i^{-1}(\leq{r})$;
		\item\label{thm:12112017:A:II} $\nabla(h_{\nu}\circ\psi_i)$ is bounded by a multiple of~$\psi_i^{1/2}$ on $\psi_i^{-1}(\leq{r})$.
	\end{enumerate}
	\item\label{thm:12112017:B} The Lie derivative~$X_m\psi$ of~$\psi$ along~$X_m$ is of class~$C^2$ and the following boundedness properties hold:
	\begin{enumerate}[label=(\roman*)]
		\item\label{thm:12112017:B:I}~$X_m\psi$ is bounded by a multiple of~$\psi_i^{3/2}$ on $\psi_i^{-1}(\leq{r})$;
		\item\label{thm:12112017:B:II} $\nabla(X_m\psi)$ is bounded by a multiple of~$\psi_i$ on $\psi_i^{-1}(\leq{r})$;
		\item\label{thm:12112017:B:III} $\operatorname{D}\!^2(X_m\psi)$ is bounded by a multiple of~$\psi_i^{1/2}$ on $\psi_i^{-1}(\leq{r})$.
	\end{enumerate}
\end{enumerate}
\end{lemma}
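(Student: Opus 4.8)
The plan is to reduce everything to three inputs: the derivative bounds for $\psi_i$ from \Cref{thm:11112017}, the near-zero behaviour of $h_\nu$ encoded in properties~\ref{def:hNu:ii}--\ref{def:hNu:v}, and the factorization $X_m\psi=(h_\nu\circ\psi_i)\,\beta$ with $\beta:=B_{i,k}\psi=B_{i,k}\psi_i$. First I would note that $\psi_i$ is itself of the form $\psi_{G,d}$, for the subgraph of edges incident to~$i$, so \Cref{thm:11112017}~\ref{thm:11112017:A} gives $\|\nabla\psi_i(p)\|\le\sqrt{c_1}\,\psi_i(p)^{1/2}$ and \Cref{thm:11112017}~\ref{thm:11112017:B} gives uniform bounds $\|\operatorname{D}\!^2\psi_i\|\le c_2$, $\|\operatorname{D}\!^3\psi_i\|\le c_2$ on $\psi_i^{-1}(\le r)$. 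Since $\psi_i\ge0$ vanishes at its zeros together with its gradient (the latter by \ref{thm:11112017:A}), Taylor's formula with the bound on $\operatorname{D}\!^2\psi_i$ yields the \emph{quadratic vanishing} $\psi_i(z+v)\le\tfrac12 c_2\|v\|^2$ at every zero~$z$ of~$\psi_i$; this is what drives the regularity claims. I would also record that \ref{def:hNu:iii}--\ref{def:hNu:v}, together with continuity on $(0,\infty)$ from \ref{def:hNu:ii}, furnish constants with $|h_\nu(y)|\le C_0 y$, $|h_\nu'(y)|\le C_1$ and $|h_\nu''(y)|\le C_2/y$ for all $y\in(0,r]$.

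For part~\ref{thm:12112017:A}, on the open set $\{\psi_i>0\}$ the map $h_\nu\circ\psi_i$ is $C^2$ by the chain rule, with $\nabla(h_\nu\circ\psi_i)=h_\nu'(\psi_i)\nabla\psi_i$; the bounds~\ref{thm:12112017:A:I} and~\ref{thm:12112017:A:II} are then immediate from $|h_\nu(y)|\le C_0 y$ and from $|h_\nu'(\psi_i)|\,\|\nabla\psi_i\|\le C_1\sqrt{c_1}\,\psi_i^{1/2}$. To get $C^1$ regularity across the zero set $Z:=\psi_i^{-1}(0)$ I would argue directly at a zero~$z$: the estimate $|h_\nu(\psi_i(z+v))|\le C_0\psi_i(z+v)\le\tfrac12 C_0 c_2\|v\|^2=o(\|v\|)$ shows differentiability at~$z$ with vanishing gradient, while~\ref{thm:12112017:A:II} shows $\nabla(h_\nu\circ\psi_i)\to0$ as $p\to z$; hence the gradient is continuous on all of~$\mathbb{R}^{nN}$.

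For part~\ref{thm:12112017:B} I would use $X_m\psi=(h_\nu\circ\psi_i)\,\beta$, which is $C^2$ on $\{\psi_i>0\}$ as a product of $C^2$ and smooth maps, together with $|\beta|\le\|\nabla\psi_i\|\le\sqrt{c_1}\,\psi_i^{1/2}$ and $\|\nabla\beta\|\le c_2$, $\|\operatorname{D}\!^2\beta\|\le c_2$ (the latter from the bounds on $\operatorname{D}\!^2\psi_i,\operatorname{D}\!^3\psi_i$, as $\beta(p)=\operatorname{D}\!\psi_i(p)B_{i,k}$ with $B_{i,k}$ constant of unit norm). Bound~\ref{thm:12112017:B:I} is then $|h_\nu(\psi_i)|\,|\beta|\le C\psi_i^{3/2}$. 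Expanding $\nabla(X_m\psi)$ and $\operatorname{D}\!^2(X_m\psi)$ by the product rule and estimating each summand gives~\ref{thm:12112017:B:II} and~\ref{thm:12112017:B:III}; the one delicate summand is $\operatorname{D}\!^2(h_\nu\circ\psi_i)\,\beta$, where $\operatorname{D}\!^2(h_\nu\circ\psi_i)=h_\nu''(\psi_i)\,\operatorname{D}\!\psi_i\otimes\operatorname{D}\!\psi_i+h_\nu'(\psi_i)\,\operatorname{D}\!^2\psi_i$ is merely \emph{bounded} (not vanishing), since $|h_\nu''(\psi_i)|\,\|\nabla\psi_i\|^2\le(C_2/\psi_i)(c_1\psi_i)=C_2c_1$, so multiplying by $|\beta|\le\sqrt{c_1}\,\psi_i^{1/2}$ restores a factor $\psi_i^{1/2}$. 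After using $\psi_i\le r^{1/2}\psi_i^{1/2}$ on the sublevel set, every summand of $\operatorname{D}\!^2(X_m\psi)$ is bounded by a multiple of $\psi_i^{1/2}$. The $C^2$ statement across~$Z$ then follows exactly as in part~\ref{thm:12112017:A}: the bounds $|X_m\psi|\le C\psi_i^{3/2}$ and $\|\nabla(X_m\psi)\|\le C\psi_i$ vanish quadratically in $\|v\|$ at each zero, giving differentiability of $X_m\psi$ and of $\nabla(X_m\psi)$ there with vanishing derivatives, while $\|\operatorname{D}\!^2(X_m\psi)\|\le C\psi_i^{1/2}\to0$ gives continuity of the second derivative.

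The main obstacle is precisely this $C^2$ claim across~$Z$. The composition $h_\nu\circ\psi_i$ is genuinely \emph{not} $C^2$: because $h_\nu$ oscillates like $\sin(\log y)$, the term $h_\nu''(\psi_i)\,\operatorname{D}\!\psi_i\otimes\operatorname{D}\!\psi_i$ stays bounded but fails to converge as $\psi_i\downarrow0$. What rescues the Lie derivative is the extra factor $\beta=B_{i,k}\psi$, which itself vanishes on~$Z$ and supplies an additional $\psi_i^{1/2}$; this upgrades every estimate by half a power of~$\psi_i$, converting the merely-bounded second derivative into one that vanishes at~$Z$. Tracking which summand needs this extra factor, and checking that the exponents $3/2,1,1/2$ in~\ref{thm:12112017:B:I}--\ref{thm:12112017:B:III} are exactly those produced by combining \Cref{thm:11112017} with \ref{def:hNu:iii}--\ref{def:hNu:v}, is the crux of the argument.
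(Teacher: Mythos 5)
Your proposal is correct and follows essentially the same route as the paper's proof: the factorization $X_m\psi=(h_\nu\circ\psi_i)\,(B_{i,k}\psi_i)$, the bounds from \Cref{thm:11112017} combined with properties~\ref{def:hNu:iii}--\ref{def:hNu:v}, smoothness on $\{\psi_i>0\}$, and the vanishing of the estimates at $Z_i$ to obtain existence and continuity of the derivatives there. The only cosmetic difference is that you make the quadratic vanishing $\psi_i(z+v)=O(\|v\|^2)$ explicit via Taylor's formula, whereas the paper packages the same fact into a Lipschitz-type estimate $|f(q)-f(p)|\leq c\,|\psi_i(q)-\psi_i(p)|$ at points $p\in Z_i$; your identification of the term $h_\nu''(\psi_i)\,\operatorname{D}\!\psi_i\otimes\operatorname{D}\!\psi_i$ as the delicate one, rescued by the extra factor $B_{i,k}\psi_i$, is exactly the paper's mechanism.
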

\begin{proof}
Let~$Z_i$ be  the zero set $\psi_i^{-1}(0)$ of~$\psi_i$, and let $U_i:=\mathbb{R}^{nN}\setminus{Z_i}$ be the set of points at which~$\psi_i$ is strictly positive. Note that~$\psi_i$ is of the form~\cref{eq:potentialFunction} with respect to the subgraph of~$G$ that originates by restricting~$G$ to the vertex~$i$ and its neighbors in~$G$. Therefore, \Cref{thm:11112017} can be applied to~$\psi_i$. Recall that~$h_{\nu}$ is assumed to satisfy the properties~\ref{def:hNu:i}-\ref{def:hNu:vi}, which are listed in \Cref{sec:controlLaw}.

Because of property~\ref{def:hNu:iii}, the function $h_{\nu}\circ\psi_i$ is bounded by a multiple of~$\psi_i$ on $\psi_i^{-1}(\leq{r})$. It follows that there exists $c>0$ such that
\[
|(h_{\nu}\circ\psi_i)(q)-(h_{\nu}\circ\psi_i)(p)| \ \leq \ c\,|\psi_i(q)-\psi_i(p)|
\]
for every $p\in{Z_i}$, and every $q\in\psi_i^{-1}(\leq{r})$. This implies that the derivative of $h_{\nu}\circ\psi_i$ exists and vanishes at every $p\in{Z_i}$ with vanishing derivative. Since property~\ref{def:hNu:ii} ensures that $h_{\nu}\circ\psi_i$ is of class~$C^2$ on~$U_i$, we can compute
\[
\nabla(h_{\nu}\circ\psi_i)(p)  \ = \ h_{\nu}'(\psi_i(p))\,\nabla\psi_i(p)
\]
for every $p\in{U_i}$. Because of property~\ref{def:hNu:iv}, the function $h_{\nu}'\circ\psi_i\colon{U_i}\to\mathbb{R}$ is bounded by a constant on $U_i\cap\psi_i^{-1}(\leq{r})$. By \Cref{thm:11112017}~\ref{thm:11112017:A}, the vector field~$\nabla\psi_i$ is bounded by a multiple of~$\psi_i^{1/2}$ on $\psi_i^{-1}(\leq{r})$. It follows that $\nabla(h_{\nu}\circ\psi_i)$ is also bounded by a multiple of~$\psi_i^{1/2}$ on $\psi_i^{-1}(\leq{r})$, and that $\nabla(h_{\nu}\circ\psi_i)$ is continuous on~$\mathbb{R}^{nN}$. This proves part~\ref{thm:12112017:A}.

Since $B_{i,k}\psi=B_{i,k}\psi_i$, we have
\[
(X_m\psi)(p) \ = \ (h_{\nu}\circ\psi_i)(p)\,(B_{i,k}\psi_i)(p)
\]
for every $p\in\mathbb{R}^{nN}$. By \Cref{thm:11112017}~\ref{thm:11112017:A}, the function $B_{i,k}\psi_i=\langle\nabla\psi_i,B_{i,k}\rangle$ is bounded by a multiple of~$\psi_i^{1/2}$ on $\psi_i^{-1}(\leq{r})$. Because of part~\ref{thm:12112017:A}, we conclude that~$X_m\psi$ is bounded by a multiple of~$\psi_i^{3/2}$ on $\psi_i^{-1}(\leq{r})$. Moreover, part~\ref{thm:12112017:A} ensures that~$X_m\psi$ is at least of class~$C^1$, and therefore we can compute
\[
\nabla(X_m\psi)(p) \ = \ (B_{i,k}\psi_i)(p)\,\nabla(h_{\nu}\circ\psi_i)(p) + (h_{\nu}\circ\psi_i)(p)\,\nabla(B_{i,k}\psi_i)(p)
\]
for every $p\in\mathbb{R}^{nN}$. We obtain from \Cref{thm:11112017}~\ref{thm:11112017:B} that the vector field $\nabla(B_{i,k}\psi_i)$ is bounded by a constant on $\psi_i^{-1}(\leq{r})$. Using again \Cref{thm:11112017}~\ref{thm:11112017:A} and part~\ref{thm:12112017:A} for the other constituents of~$\nabla(X_m\psi)$, we derive that~$\nabla(X_m\psi)$ is bounded by a multiple of~$\psi_i$ on $\psi_i^{-1}(\leq{r})$. It follows that there exists $c>0$ such that
\[
\|\nabla(X_m\psi)(q)-\nabla(X_m\psi)(p)\| \ \leq \ c\,|\psi_i(q)-\psi_i(p)|
\]
for every $p\in{Z_i}$, and every $q\in\psi_i^{-1}(\leq{r})$. This implies that the derivative of~$\nabla(X_m\psi)$ exists and vanishes at every $p\in{Z_i}$. Since $h_{\nu}\circ\psi_i$ is of class~$C^2$ on~$U_i$, we can compute
\[
\operatorname{D}\!^2(h_{\nu}\circ\psi_i)(p)(v,w)  = (h_{\nu}''\circ\psi_i)(p)\,\langle\nabla\psi_i(p),v\rangle\,\langle\nabla\psi_i(p),w\rangle + (h_{\nu}'\circ\psi_i)(p)\,\operatorname{D}\!^2\psi_i(p)(v,w)
\]
for every $p\in{U_i}$ and all $v,w\in\mathbb{R}^{nN}$. Because of~\ref{def:hNu:iv}, the function $h_{\nu}'\circ\psi_i$ is bounded by a constant on $U_i\cap\psi_i^{-1}(\leq{r})$, and  because of~\ref{def:hNu:v}, the function $(h_{\nu}''\circ\psi_i)\,\psi_i$ is bounded by a constant on $U_i\cap\psi_i^{-1}(\leq{r})$. By \Cref{thm:11112017}~\ref{thm:11112017:A}, the gradient $\nabla\psi_i$ is bounded by a multiple of $\psi_i^{1/2}$ on $\psi_i^{-1}(\leq{r})$. By \Cref{thm:11112017}~\ref{thm:11112017:B}, $\operatorname{D}\!^2\psi_i$ is bounded by a constant on $\psi_i^{-1}(\leq{r})$. It follows that $\operatorname{D}\!^2(h_{\nu}\circ\psi_i)$ is bounded by a constant on $U_i\cap\psi_i^{-1}(\leq{r})$. We compute
\begin{align*}
\operatorname{D}\!^2(X_m\psi)(p)(v,w) & \ = \ \operatorname{D}\!^2(h_{\nu}\circ\psi_i)(p)(v,w)\,(B_{i,k}\psi_i)(p) \\
& \qquad + \langle\nabla(h_{\nu}\circ\psi_i)(p),v\rangle\,\langle\nabla(B_{i,k}\psi_i)(p),w\rangle \\
& \qquad + \langle\nabla(h_{\nu}\circ\psi_i)(p),w\rangle\,\langle\nabla(B_{i,k}\psi_i)(p),v\rangle \\
& \qquad + (h_{\nu}\circ\psi_i)(p)\,\operatorname{D}\!^2(B_{i,k}\psi_i)(p)(v,w)
\end{align*}
for every $p\in{U_i}$ and all $v,w\in\mathbb{R}^{nN}$. We obtain from \Cref{thm:11112017}~\ref{thm:11112017:B} that the map $\operatorname{D}\!^2(B_{i,k}\psi_i)$ is bounded by a constant on $\psi_i^{-1}(\leq{r})$. For the other constituents of $\operatorname{D}\!^2(X_m\psi)$, we already know boundedness properties on $U_i\cap\psi_i^{-1}(\leq{r})$. This way, we conclude that $\operatorname{D}\!^2(X_m\psi)$ is bounded by multiple of~$\psi_i^{1/2}$ on $U_i\cap\psi_i^{-1}(\leq{r})$. Since we already know that the second derivative of $(X_m\psi)$ exists and vanishes on~$Z_i$, it follows that $\operatorname{D}\!^2(X_m\psi)$ exists as a continuous map on~$\mathbb{R}^{nN}$, and that it is bounded by a multiple of~$\psi_i^{1/2}$ on $\psi_i^{-1}(\leq{r})$.
\end{proof}
Note that, for every $i\in\{1,\ldots,N\}$, we have $\psi_i\leq\psi$ on~$\mathbb{R}^{nN}$. This implies that $\psi^{-1}(\leq{r})$ is a subset of $\psi_i^{-1}(\leq{r})$ for every $r>0$ and every $i\in\{1,\ldots,N\}$. In the next step, we use \Cref{thm:12112017} to derive the following result.
\begin{lemma}\label{thm:13112017}
Let $m_{\ell}=(i_{\ell},k_{\ell},\nu_{\ell})\in\Lambda$ for $\ell=1,2,3$ and let $r>0$.
\begin{enumerate}[label=(\alph*)]
	\item\label{thm:13112017:A}
	\begin{enumerate}[label=(\roman*)] 
		\item\label{thm:13112017:A:I}~$X_{m_1}$ is of class~$C^1$ on~$\mathbb{R}^{nN}$, and bounded by a multiple of~$\psi$ on $\psi^{-1}(\leq{r})$.
		\item\label{thm:13112017:A:II} $(\operatorname{D}\!X_{m_1})X_{m_2}$ is of class~$C^0$ on~$\mathbb{R}^{nN}$, and bounded by a multiple of~$\psi^{3/2}$ on $\psi^{-1}(\leq{r})$.
	\end{enumerate}
	\item\label{thm:13112017:B}
	\begin{enumerate}[label=(\roman*)]
		\item\label{thm:13112017:B:I} $X_{m_1}\psi$ is of class~$C^2$ on~$\mathbb{R}^{nN}$, and bounded by a multiple of~$\psi^{3/2}$ on $\psi^{-1}(\leq{r})$.
		\item\label{thm:13112017:B:II} $X_{m_2}(X_{m_1}\psi)$ is of class~$C^1$ on~$\mathbb{R}^{nN}$, and bounded by a multiple of~$\psi^{2}$ on $\psi^{-1}(\leq{r})$.
		\item\label{thm:13112017:B:III} $X_{m_3}(X_{m_2}(X_{m_1}\psi))$ is of class~$C^0$ on~$\mathbb{R}^{nN}$, and bounded by a multiple of~$\psi^{5/2}$ on $\psi^{-1}(\leq{r})$.
	\end{enumerate}
\end{enumerate}
\end{lemma}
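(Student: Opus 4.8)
The plan is to exploit the product structure of the control vector fields. Because $B_{i,k}$ is a \emph{constant} unit vector field, each $X_m=(h_{\nu}\circ\psi_i)\,B_{i,k}$ is a scalar function times a fixed vector, so every derivative of $X_m$ is the corresponding derivative of the scalar $h_{\nu}\circ\psi_i$ tensored with $B_{i,k}$; no derivatives of $B_{i,k}$ ever occur. This reduces the whole lemma to the single-vertex estimates of \Cref{thm:12112017}, together with the elementary fact that $\psi_i\leq\psi$ on~$\mathbb{R}^{nN}$. The latter yields $\psi^{-1}(\leq r)\subseteq\psi_i^{-1}(\leq r)$ and $\psi_i^{\alpha}\leq\psi^{\alpha}$ for $\alpha>0$, so any bound by a multiple of~$\psi_i^{\alpha}$ on $\psi_i^{-1}(\leq r)$ upgrades at once to a bound by a multiple of~$\psi^{\alpha}$ on $\psi^{-1}(\leq r)$. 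I would track in parallel the exponent of~$\psi$ (call it the \emph{order}) and the differentiability class, observing that each Lie derivative along an~$X_m$ raises the order by~$1/2$ and lowers the class by one.

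Parts~\ref{thm:13112017:A} and the first two items of part~\ref{thm:13112017:B} are then routine. For~\ref{thm:13112017:A:I}, $\|X_{m_1}\|=|h_{\nu_1}\circ\psi_{i_1}|$ is bounded by a multiple of $\psi_{i_1}\leq\psi$ by \Cref{thm:12112017}~\ref{thm:12112017:A:I}, and is $C^1$ since $h_{\nu_1}\circ\psi_{i_1}$ is. For~\ref{thm:13112017:A:II}, writing $(\operatorname{D}\!X_{m_1})X_{m_2}=\langle\nabla(h_{\nu_1}\circ\psi_{i_1}),X_{m_2}\rangle\,B_{i_1,k_1}$ and bounding it by $\|\nabla(h_{\nu_1}\circ\psi_{i_1})\|\,\|X_{m_2}\|$ combines an order-$1/2$ factor [\Cref{thm:12112017}~\ref{thm:12112017:A:II}] with an order-$1$ factor to give order~$3/2$; it is $C^0$ because $\operatorname{D}\!X_{m_1}$ is $C^0$ and $X_{m_2}$ is $C^0$. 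Item~\ref{thm:13112017:B:I} is exactly \Cref{thm:12112017}~\ref{thm:12112017:B} with $\psi_i$ replaced by~$\psi$. For~\ref{thm:13112017:B:II}, I would write $\phi:=X_{m_2}(X_{m_1}\psi)=\langle\nabla(X_{m_1}\psi),X_{m_2}\rangle$ and bound it by the product of the order-$1$ factor $\|\nabla(X_{m_1}\psi)\|$ [\Cref{thm:12112017}~\ref{thm:12112017:B:II}] and the order-$1$ factor $\|X_{m_2}\|$, obtaining order~$2$; it is $C^1$ because $X_{m_1}\psi$ is $C^2$ and $X_{m_2}$ is $C^1$.

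The one genuinely laborious item is~\ref{thm:13112017:B:III}, where all three estimates of \Cref{thm:12112017}~\ref{thm:12112017:B} enter simultaneously. Differentiating $\phi=\langle\nabla(X_{m_1}\psi),X_{m_2}\rangle$ by the product rule gives, for every direction~$v$,
\[
\operatorname{D}\!\phi(p)(v) \ = \ \operatorname{D}\!^2(X_{m_1}\psi)(p)(v,X_{m_2}(p)) + \langle\nabla(X_{m_1}\psi)(p),\operatorname{D}\!X_{m_2}(p)\,v\rangle.
\]
The first summand pairs the order-$1/2$ factor $\operatorname{D}\!^2(X_{m_1}\psi)$ [\Cref{thm:12112017}~\ref{thm:12112017:B:III}] with the order-$1$ factor $X_{m_2}$; the second uses $\operatorname{D}\!X_{m_2}\,v=\langle\nabla(h_{\nu_2}\circ\psi_{i_2}),v\rangle\,B_{i_2,k_2}$ together with the order-$1$ factor $\nabla(X_{m_1}\psi)$ and the order-$1/2$ factor $\nabla(h_{\nu_2}\circ\psi_{i_2})$. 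Both contributions are of order~$3/2$, so $\nabla\phi$ is of order~$3/2$; contracting with $X_{m_3}$ (order~$1$) then yields the claimed order~$5/2$, and $X_{m_3}\phi$ is $C^0$ because $\phi$ is $C^1$. I expect the only real obstacle to be the bookkeeping in this last step, namely verifying that the second-derivative bound and the two gradient bounds combine to precisely the exponent~$5/2$, rather than any conceptual difficulty, since the constant-vector structure of~$B_{i,k}$ eliminates every curvature term.
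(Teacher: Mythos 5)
Your proof is correct and takes essentially the same route as the paper: both arguments reduce every item to \Cref{thm:12112017} together with the inclusion $\psi^{-1}(\leq r)\subseteq\psi_i^{-1}(\leq r)$ and the bound $\psi_i\leq\psi$, exploiting that the $B_{i,k}$ are constant unit vector fields. In particular, your two product-rule terms in part~\ref{thm:13112017:B:III} are, after contracting with $X_{m_3}=(h_{\nu_3}\circ\psi_{i_3})B_{i_3,k_3}$, term-by-term identical to the two terms in the paper's expansion of $X_{m_3}(X_{m_2}(X_{m_1}\psi))$; you merely write the computation via gradient contractions where the paper factors out the scalars $h_{\nu}\circ\psi_i$ and works with directional derivatives along the $B_{i,k}$.
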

\begin{proof}
Because of \Cref{thm:12112017}~\ref{thm:12112017:A}, the vector field $X_{m_1}=(h_{\nu_1}\circ\psi_{i_1})\,B_{i_1,k_1}$ is of class~$C^1$, and it is bounded by a multiple of~$\psi$ on $\psi^{-1}(\leq{r})$. We also obtain from \Cref{thm:12112017}~\ref{thm:12112017:A} that $\nabla(h_{\nu_1}\circ\psi_{i_1})$ is of class~$C^0$ and bounded by a multiple of~$\psi^{3/2}$ on $\psi^{-1}(\leq{r})$. It follows that the same is true for the derivative of~$X_{m_1}$. This implies the second statement of part~\ref{thm:13112017:A}.

To prove part~\ref{thm:13112017:B}, note that by \Cref{thm:12112017}~\ref{thm:12112017:B}, the function~$X_{m_1}\psi$ is of class~$C^2$ and also bounded by a multiple of~$\psi^{3/2}$ on $\psi^{-1}(\leq{r})$. In particular, we can compute the Lie derivatives
\begin{align*}
X_{m_2}(X_{m_1}\psi) & \ = \ (h_{\nu_2}\circ\psi_{i_2})\,(B_{i_2,k_2}(X_{m_1}\psi)), \\
X_{m_3}(X_{m_2}(X_{m_1}\psi)) & \ = \ (h_{\nu_3}\circ\psi_{i_3})\,(B_{i_3,k_3}(h_{\nu_2}\circ\psi_{i_2}))\,(B_{i_2,k_2}(X_{m_1}\psi))  \\
& \qquad + (h_{\nu_3}\circ\psi_{i_3})\,(h_{\nu_2}\circ\psi_{i_2})\,(B_{i_3,k_3}(B_{i_2,k_2}(X_{m_1}\psi))),
\end{align*}
which are of class~$C^1$ and~$C^0$, respectively. The asserted boundedness properties of the functions $X_{m_2}(X_{m_1}\psi)$ and $X_{m_3}(X_{m_2}(X_{m_1}\psi))$ now follow immediately from \Cref{thm:12112017}.
\end{proof}
Because of \Cref{thm:13112017}~\ref{thm:13112017:A}, for every $i=1,\ldots,N$ and every $k=1,\ldots,n$, the Lie bracket $[X_{(i,k,1)},X_{(i,k,2)}]$ of $X_{(i,k,1)}$, $X_{(i,k,2)}$ exists as a continuous vector field on~$\mathbb{R}^{nN}$. Thus,
\begin{equation}\label{eq:Duncan0}
Y \ := \ \frac{1}{2}\sum_{i=1}^{N}\sum_{k=1}^{n}[X_{(i,k,1)},X_{(i,k,2)}] \colon \mathbb{R}^{nN} \to \mathbb{R}^{nN}
\end{equation}
is also a well-defined continuous vector on~$\mathbb{R}^{nN}$. In fact, one can show that~$Y$ is of class~$C^1$, but we do not need this property in the following. Moreover, we define a function $h\colon\mathbb{R}\to\mathbb{R}$ by $h(y):=0$ for $y\leq{0}$, and by
\[
h(y) \ := \ [h_1,h_2](y)
\]
for $y>0$ with $[h_1,h_2](y)$ as in~\cref{eq:hLie}. Using the identity $B_{i,k}\psi=B_{i,k}\psi_i$, a direct computation shows that
\[
[X_{(i,k,1)},X_{(i,k,2)}] \ = \ (h\circ\psi_i)\,(B_{i,k}\psi)\,B_{i,k}
\]
holds on~$\mathbb{R}^{nN}$ for $i=1,\ldots,N$ and $k=1,\ldots,n$. Thus, the vector field~$Y$ is given by
\begin{equation}\label{eq:Duncan1}
Y \ = \ \frac{1}{2}\sum_{i=1}^{N}\sum_{k=1}^{n}(h\circ\psi_i)\,(B_{i,k}\psi)\,B_{i,k}.
\end{equation}
It is now easy to see that the differential equation $\dot{p}=Y(p)$ in~$\mathbb{R}^{nN}$ coincides with the~$N$ coupled differential equations~\cref{eq:limitSystem} in~$\mathbb{R}^{n}$. As indicated earlier, in a neighborhood of the set~\cref{eq:setOfDesiredStates}, the system state of~\cref{eq:limitSystem} is constantly driven into a descent direction of~$\psi$. We make this statement more precise by providing an estimate for the Lie derivative of~$\psi$ along~$Y$:
\begin{lemma}\label{thm:14112017}
There exist $c,r>0$ such that
\[
(Y\psi)(p) \ \leq \ -c\,\|\nabla\psi(p)\|^4
\]
for every $p\in\psi^{-1}(\leq{r})$.
\end{lemma}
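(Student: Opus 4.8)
The plan is to rewrite $(Y\psi)$ as an explicit, manifestly nonpositive sum of squares, and then to convert that sum into the fourth power of $\|\nabla\psi\|$ by matching the \emph{local} weights $\psi_i$ against the coordinate blocks of the \emph{global} gradient.

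First I would use the representation \cref{eq:Duncan1} of $Y$, the definition of the Lie derivative, and the identity $\langle\nabla\psi(p),B_{i,k}(p)\rangle=(B_{i,k}\psi)(p)$ to obtain
\[
(Y\psi)(p) \ = \ \langle\nabla\psi(p),Y(p)\rangle \ = \ \frac{1}{2}\sum_{i=1}^{N}\sum_{k=1}^{n}(h\circ\psi_i)(p)\,(B_{i,k}\psi)(p)^2.
\]
Because the $B_{i,k}(p)$ form an orthonormal basis of $\mathbb{R}^{nN}$, the inner sum over $k$ equals $\|\nabla_{p_i}\psi(p)\|^2$, and the full double sum equals $\|\nabla\psi(p)\|^2$. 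Next I would invoke the sign condition~\ref{def:hNu:vi}, which provides $r_0,c_0>0$ with $h(y)=[h_1,h_2](y)\le -c_0\,y$ on $(0,r_0]$; since $h(0)=0$, this extends to $h(y)\le -c_0\,y$ on $[0,r_0]$. Choosing $r\le r_0$ and using $\psi_i\le\psi$, every argument $\psi_i(p)$ with $p\in\psi^{-1}(\le r)$ lies in $[0,r]\subseteq[0,r_0]$, whence
\[
(Y\psi)(p) \ \le \ -\frac{c_0}{2}\sum_{i=1}^{N}\psi_i(p)\,\|\nabla_{p_i}\psi(p)\|^2.
\]

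The decisive step is then to bound each weight $\psi_i$ from below by the corresponding gradient block. As observed in the proof of \Cref{thm:12112017}, $\psi_i$ is itself of the form \cref{eq:potentialFunction} for the subgraph induced by $i$ and its neighbors, so \Cref{thm:11112017}~\ref{thm:11112017:A} applies: after shrinking $r$ if necessary there is $c_1>0$ with $\|\nabla\psi_i(p)\|^2\le c_1\,\psi_i(p)$ on $\psi^{-1}(\le r)\subseteq\psi_i^{-1}(\le r)$. Since $\nabla_{p_i}\psi=\nabla_{p_i}\psi_i$ is a coordinate block of the full gradient $\nabla\psi_i$, this yields $\|\nabla_{p_i}\psi(p)\|^2\le\|\nabla\psi_i(p)\|^2\le c_1\,\psi_i(p)$, i.e.\ $\psi_i(p)\ge\|\nabla_{p_i}\psi(p)\|^2/c_1$. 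Substituting and then applying the Cauchy--Schwarz inequality $\big(\sum_i a_i\big)^2\le N\sum_i a_i^2$ with $a_i:=\|\nabla_{p_i}\psi(p)\|^2\ge0$ gives
\[
\sum_{i=1}^{N}\psi_i(p)\,\|\nabla_{p_i}\psi(p)\|^2 \ \ge \ \frac{1}{c_1}\sum_{i=1}^{N}\|\nabla_{p_i}\psi(p)\|^4 \ \ge \ \frac{1}{c_1 N}\,\|\nabla\psi(p)\|^4,
\]
so the assertion follows with $c:=c_0/(2c_1N)$.

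The step I expect to be the main obstacle is the passage from the weighted sum $\sum_i\psi_i\,\|\nabla_{p_i}\psi\|^2$ to $\|\nabla\psi\|^4$: one must resist trying to compare $\psi_i$ with the \emph{global} potential $\psi$ (which would require infinitesimal rigidity and the lower bound from \Cref{thm:11112017}~\ref{thm:11112017:C}), and instead notice that the \emph{upper} bound from \Cref{thm:11112017}~\ref{thm:11112017:A}, applied \emph{locally} to each $\psi_i$, already upgrades the linear weight $\psi_i$ into the quadratic factor $\|\nabla_{p_i}\psi\|^2$, after which the power-mean step recovers the full fourth power. It is worth stressing that this argument needs neither the rigidity hypothesis nor the lower gradient estimate, consistent with the fact that \Cref{thm:14112017} is stated without any rigidity assumption.
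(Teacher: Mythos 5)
Your proposal is correct and follows essentially the same route as the paper's proof: both start from \cref{eq:Duncan1}, use property~\ref{def:hNu:vi} to bound $h\circ\psi_i\le -c_0\,\psi_i$ on the sublevel set, invoke the upper gradient estimate of \Cref{thm:11112017}~\ref{thm:11112017:A} \emph{locally} for each $\psi_i$ to upgrade the weight $\psi_i$ to a squared gradient term, and finish with a norm-equivalence (power-mean) step to produce $\|\nabla\psi\|^4$. The only cosmetic difference is that the paper keeps the double-indexed components $(B_{i,k}\psi)$ and compares the $4$-norm with the $2$-norm on $\mathbb{R}^{nN}$, while you group by agent and apply Cauchy--Schwarz over the $N$ blocks $\|\nabla_{p_i}\psi\|^2$ --- the same argument in different packaging, and your closing observation that no rigidity is needed here matches the paper exactly.
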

\begin{proof}
Since we assume that $h_1,h_2$ satisfy property~\ref{def:hNu:vi} in \Cref{sec:controlLaw}, there exist $c_h,r>0$ such that $h(y)\leq-{c_h\,y}$ for every $y\in[0,r]$. Because of~\cref{eq:Duncan1}, this implies
\[
Y\psi \ \leq \ -c_h\sum_{i=1}^{N}\sum_{k=1}^{n}\psi_i\,(B_{i,k}\psi)^2
\]
on $\psi^{-1}(\leq{r})$. We obtain from \Cref{thm:11112017}~\ref{thm:11112017:A} that for every $i\in\{1,\ldots,N\}$, there exists $c_{i}>0$ such that for every $k\in\{1,\ldots,n\}$, we have
\[ 
\psi_i \ \geq\ c_{i}\,\|\nabla\psi_i\|^2 \ \geq \ c_{i}\,(B_{i,k}\psi_i)^2 \ = \ c_{i}\,(B_{i,k}\psi)^2
\]
on $\psi^{-1}(\leq{r})$. Thus, there exists $\tilde{c}>0$ such that
\[
Y\psi \ \leq \ -\tilde{c}\sum_{i=1}^{N}\sum_{k=1}^{n}(B_{i,k}\psi)^4
\]
on $\psi^{-1}(\leq{r})$. Note that the sum on the right-hand side is the 4th power of the $4$-norm of the vector field with components $B_{i,k}\psi$. On the other hand, we have $\|\nabla\psi\|^2=\sum_{i=1}^{N}\sum_{k=1}^{n}(B_{i,k}\psi)^2$ since the vector fields~$B_{i,k}$ form an orthonormal frame of~$\mathbb{R}^{nN}$. Since all norms on~$\mathbb{R}^{nN}$ are equivalent, the asserted estimate follows.
\end{proof}

\subsection{Averaging}\label{sec:averagingLemmas}
The next step in the analysis of the closed-loop system~\cref{eq:controlAffineForm} addresses the trigonometric functions~$u_m$ therein. Instead of the differential equation~\cref{eq:controlAffineForm}, it is more convenient to consider the corresponding integral equation. Repeated integration by parts on the right-hand side of this integral equation shows that the functions~$u_m$ give rise to an averaged vector field, which consists of Lie brackets of the~$X_m$. A much more general treatment of this averaging procedure is done in~\cite{Kurzweil1987,Kurzweil19882,Kurzweil1988,Sussmann1991,Liu19972,Liu1997}. In the following, we introduce the notation from~\cite{Liu19972,Liu1997}.

For every $m=(i,k,\nu)\in\Lambda$, define two complex constants $\eta_{\pm\omega_{i,k},m}\in\mathbb{C}$ as follows. If $\nu=1$, let $\eta_{\pm\omega_{i,k},m}:=\sqrt{\omega_{i,k}}\,\mathrm{e}^{\pm\mathrm{i}\varphi_{i,k}}/2$, and otherwise, i.e., if $\nu=2$, let $\eta_{\pm\omega_{i,k},m}:=\pm\sqrt{\omega_{i,k}}\,\mathrm{e}^{\pm\mathrm{i}\varphi_{i,k}}/(2\mathrm{i})$, where~$\mathrm{i}$ denotes the imaginary unit. Moreover, let $\Omega(m):=\{\pm\omega_{i,k}\}$. Then, we can write~$u_m$ in~\cref{eq:sinusoids} as
\[
u_m(t) \ = \ \sum_{\omega\in\Omega(m)}\eta_{\omega,m}\,\mathrm{e}^{\mathrm{i}\omega{t}}
\]
for every $t\in\mathbb{R}$. Additionally, define two functions $v_{m},\widetilde{UV}_{m}\colon\mathbb{R}\to\mathbb{R}$ by
\begin{align*}
v_{m}(t) & \ := \ 0, \\
\widetilde{UV}_{m}(t) & \ := \ -\sum_{\omega\in\Omega(m)}\frac{\eta_{\omega,m}}{\mathrm{i}\omega}\,\mathrm{e}^{\mathrm{i}\omega{t}}.
\end{align*}
For all $m,m'\in\Lambda$, define $v_{m',m},\widetilde{UV}_{m',m}\colon\mathbb{R}\to\mathbb{R}$ by
\begin{align*}
v_{m',m}(t) & \ := \ -\sum_{\substack{(\omega',\omega)\in\Omega(m')\times\Omega(m)\\\omega'+\omega=0}}\frac{\eta_{\omega',m'}\,\eta_{\omega,m}}{\mathrm{i}\,\omega}, \\
\widetilde{UV}_{m',m}(t) & \ := \ \sum_{\substack{(\omega',\omega)\in\Omega(m')\times\Omega(m)\\\omega'+\omega\neq{0}}}\frac{\eta_{\omega',m'}\,\eta_{\omega,m}}{\mathrm{i}^2\,\omega(\omega'+\omega)}\,\mathrm{e}^{\mathrm{i}(\omega'+\omega)t}.
\end{align*}
\begin{remark}\label{thm:frequencyEstimates}
Suppose that the frequency coefficients~$\omega_{i,k}$ are given by~\cref{eq:omegaIK} in \Cref{exm:omegaIK}. Then, it follows directly from the definition of the functions~$\widetilde{UV}_{m}$ and~$\widetilde{UV}_{m',m}$ that there exists $c>0$ such that
\[
\big|\widetilde{UV}_{m}(t)\big| \ \leq \ \frac{c}{\sqrt{\omega}} \qquad \text{and} \qquad \big|\widetilde{UV}_{m',m}(t)\big| \leq \ \frac{c}{\omega}
\]
for all $m,m'\in\Lambda$ and every $t\in\mathbb{R}$. This shows that the~$\widetilde{UV}_{m}$ and~$\widetilde{UV}_{m',m}$ converge uniformly to~$0$ as the global frequency parameter~$\omega$ tends to~$\infty$. We will address this convergence property again in \Cref{thm:remainderContributions} and in \Cref{sec:discussion}.
\end{remark}
A direct computation reveals that the above functions are related as follows.
\begin{lemma}\label{thm:15112017}
Let $m_1=(i_1,k_1,\nu_1),m_2=(i_2,k_2,\nu_2)\in\Lambda$ and $t_0,t\in\mathbb{R}$. Then:
\begin{align*}
\int_{t_0}^{t}\big(v_{m_1}(s) - u_{m_1}(s)\big)\,\mathrm{d}s & \ = \ \widetilde{UV}_{m_1}(t) - \widetilde{UV}_{m_1}(t_0), \\
\int_{t_0}^{t}\big(v_{m_2,m_1}(s) - u_{m_2}(s)\,\widetilde{UV}_{m_1}(s)\big)\,\mathrm{d}s & \ = \ \widetilde{UV}_{m_2,m_1}(t) - \widetilde{UV}_{m_2,m_1}(t_0),
\end{align*}
and
\[
v_{m_2,m_1}(t) \ = \ \left\{\begin{tabular}{cl} $+\frac{1}{2}$ & if $(i_2,k_2)=(i_1,k_1)$ and $\nu_2=1$ and $\nu_1=2$, \\ $-\frac{1}{2}$ & if $(i_2,k_2)=(i_1,k_1)$ and $\nu_2=2$ and $\nu_1=1$, \\ $0$ & otherwise. \end{tabular}\right.
\]
\end{lemma}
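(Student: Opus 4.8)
The plan is to prove all three identities by direct computation from the Fourier representations of the functions involved, the crucial structural fact being that the frequencies $\omega_{i,k}$ are pairwise distinct, so that the only resonances arise when a frequency cancels against its own negative.

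For the first identity, since $v_{m_1}\equiv{0}$, I would simply integrate $-u_{m_1}(s)=-\sum_{\omega\in\Omega(m_1)}\eta_{\omega,m_1}\,\mathrm{e}^{\mathrm{i}\omega{s}}$ term by term. Because $\Omega(m_1)=\{\pm\omega_{i_1,k_1}\}$ contains no zero frequency, each primitive equals $-\eta_{\omega,m_1}\,\mathrm{e}^{\mathrm{i}\omega{s}}/(\mathrm{i}\omega)$, and evaluation between $t_0$ and $t$ reproduces $\widetilde{UV}_{m_1}(t)-\widetilde{UV}_{m_1}(t_0)$ directly from the definition of $\widetilde{UV}_{m_1}$.

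For the second identity, the key step is to expand the product
\[
u_{m_2}(s)\,\widetilde{UV}_{m_1}(s) \ = \ -\sum_{(\omega',\omega)\in\Omega(m_2)\times\Omega(m_1)}\frac{\eta_{\omega',m_2}\,\eta_{\omega,m_1}}{\mathrm{i}\,\omega}\,\mathrm{e}^{\mathrm{i}(\omega'+\omega)s}
\]
and to split the double sum according to whether $\omega'+\omega=0$ or $\omega'+\omega\neq{0}$. The resonant part (those pairs with $\omega'+\omega=0$) is time-independent and is exactly the constant $v_{m_2,m_1}(s)$, so it cancels against the first summand on the left-hand side. The remaining nonresonant part I would integrate term by term; since $\omega'+\omega\neq{0}$ there, each primitive acquires the extra factor $1/(\mathrm{i}(\omega'+\omega))$, and evaluation between $t_0$ and $t$ reproduces $\widetilde{UV}_{m_2,m_1}(t)-\widetilde{UV}_{m_2,m_1}(t_0)$.

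For the third identity I would exploit pairwise distinctness of the $\omega_{i,k}$: the resonance condition $\omega'+\omega=0$ with $\omega'\in\{\pm\omega_{i_2,k_2}\}$ and $\omega\in\{\pm\omega_{i_1,k_1}\}$ forces $\omega_{i_2,k_2}=\omega_{i_1,k_1}$, hence $(i_2,k_2)=(i_1,k_1)$; otherwise the index set is empty and $v_{m_2,m_1}=0$. When $(i_2,k_2)=(i_1,k_1)$, there are precisely the two resonant pairs $(\omega_{i_1,k_1},-\omega_{i_1,k_1})$ and $(-\omega_{i_1,k_1},\omega_{i_1,k_1})$, into which I substitute the explicit values of $\eta_{\pm\omega_{i,k},m}$ from their definition. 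The phase shifts $\varphi_{i,k}$ drop out because they enter as $\mathrm{e}^{\mathrm{i}\varphi}\mathrm{e}^{-\mathrm{i}\varphi}=1$, and a short case distinction over $(\nu_2,\nu_1)\in\{1,2\}^2$ produces $+1/2$, $-1/2$, or $0$ as asserted. I expect no genuine difficulty here; the only obstacle is the bookkeeping of the signs and the hidden factors of $\mathrm{i}$ in the definition of the $\eta$ constants, which must be tracked carefully to obtain the correct sign in each case.
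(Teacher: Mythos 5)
Your proposal is correct: all three identities follow exactly as you describe by term-by-term integration of the Fourier expansions, with the resonant/nonresonant splitting and the resonance condition forced to $(i_2,k_2)=(i_1,k_1)$ by pairwise distinctness of the $\omega_{i,k}$. This is essentially the same argument the paper relies on — it omits the proof and refers to the direct computations in the cited work of Liu, which are precisely the calculations you carry out.
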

We omit the proof here, and refer the reader instead to the computations in the proof of the main theorem in~\cite{Liu19972}.

Because of \Cref{thm:15112017}, we have
\begin{equation}\label{eq:Duncan2}
\sum_{m_1,m_2\in\Lambda}v_{m_2,m_1}\,X_{m_2}(X_{m_1}\psi) \ = \ \frac{1}{2}\sum_{i=1}^{N}\sum_{k=1}^{n}([X_{(i,k,1)},X_{(i,k,2)}]\psi)(p) \ = \ Y\psi,
\end{equation}
where the vector field $Y\colon\mathbb{R}^{nN}\to\mathbb{R}^{nN}$ is given by~\cref{eq:Duncan0}. Next, we write down the propagation of~$\psi$ along trajectories of~\cref{eq:controlAffineForm} as an integral equation, which consists of the averaged part~\cref{eq:Duncan2} and a remainder part. Recall that we already know from \Cref{thm:ExistenceUniqueness} that there exists a unique global solution of~\cref{eq:controlAffineForm} for any initial condition.
\begin{proposition}\label{thm:16112017}
Let $\gamma\colon\mathbb{R}\to\mathbb{R}^{nN}$ be a trajectory of~\cref{eq:controlAffineForm}. Then
\begin{subequations}\label{eq:16112017:01}
\begin{align}
\psi(\gamma(t)) & \ = \ \psi(\gamma(t_0)) + \int_{t_0}^{t}(Y\psi)(\gamma(s))\,\mathrm{d}s - (D_1\psi)(t_0,\gamma(t_0)) \label{eq:16112017:01:A} \\
& \qquad + (D_1\psi)(t,\gamma(t)) + \int_{t_0}^{t}(D_2\psi)(s,\gamma(s))\,\mathrm{d}s \label{eq:16112017:01:B}
\end{align}
\end{subequations}
for all $t_0,t\in\mathbb{R}$, where $D_1\psi,D_2\psi\colon\mathbb{R}\times\mathbb{R}^{nN}\to\mathbb{R}$ are defined by
\begin{subequations}\label{eq:16112017:02}
\begin{align}
(D_1\psi)(s,p) & \ := \  - \sum_{m_1\in\Lambda}\widetilde{UV}_{m_1}(s)\,(X_{m_1}\psi)(p) \label{eq:16112017:02:A} \\
& \qquad - \sum_{m_1,m_2\in\Lambda}\widetilde{UV}_{m_2,m_1}(s)\,(X_{m_2}(X_{m_1}\psi))(p), \label{eq:16112017:02:B} \\
(D_2\psi)(s,p) & \ := \ \sum_{m_1,m_2,m_3\in\Lambda}u_{m_3}(s)\,\widetilde{UV}_{m_2,m_1}(s)\,(X_{m_3}(X_{m_2}(X_{m_1}\psi)))(p) \label{eq:16112017:02:C}
\end{align}
\end{subequations}
for all $(s,p)\in\mathbb{R}\times\mathbb{R}^{nN}$.
\end{proposition}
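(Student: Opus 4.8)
The plan is to derive \cref{eq:16112017:01} by a twofold integration by parts, combining the fundamental theorem of calculus with the two antiderivative identities in \Cref{thm:15112017}. First I would differentiate $\psi$ along the trajectory: since $\gamma$ solves \cref{eq:controlAffineForm}, the chain rule and the definition $(X_{m_1}\psi)(p)=\operatorname{D}\!\psi(p)X_{m_1}(p)$ give
\[
\psi(\gamma(t)) - \psi(\gamma(t_0)) \ = \ \int_{t_0}^{t}\operatorname{D}\!\psi(\gamma(s))\,\dot{\gamma}(s)\,\mathrm{d}s \ = \ \sum_{m_1\in\Lambda}\int_{t_0}^{t}u_{m_1}(s)\,(X_{m_1}\psi)(\gamma(s))\,\mathrm{d}s.
\]
By \Cref{thm:13112017}, the $X_{m_1}$ are of class~$C^1$, so~$\gamma$ is~$C^1$ and each $s\mapsto(X_{m_1}\psi)(\gamma(s))$ is~$C^1$; this is precisely the regularity needed to legitimize the integrations by parts that follow.

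Next I would integrate by parts once. The first identity of \Cref{thm:15112017} (with $v_{m_1}=0$) says that $\widetilde{UV}_{m_1}$ is an antiderivative of $-u_{m_1}$, so $u_{m_1}=-\dot{\widetilde{UV}}_{m_1}$. Integrating by parts and using $\tfrac{\mathrm{d}}{\mathrm{d}s}(X_{m_1}\psi)(\gamma(s))=\sum_{m_2\in\Lambda}u_{m_2}(s)\,(X_{m_2}(X_{m_1}\psi))(\gamma(s))$ on the differentiated factor produces two contributions: the boundary term $-\sum_{m_1}\widetilde{UV}_{m_1}(X_{m_1}\psi)(\gamma)$ evaluated between $t_0$ and~$t$, which is exactly the first sum \cref{eq:16112017:02:A} of $D_1\psi$ at $(t,\gamma(t))$ minus the same at $(t_0,\gamma(t_0))$; and a new double-sum integral $\sum_{m_1,m_2}\int_{t_0}^{t}u_{m_2}(s)\,\widetilde{UV}_{m_1}(s)\,(X_{m_2}(X_{m_1}\psi))(\gamma(s))\,\mathrm{d}s$, where \Cref{thm:13112017} again guarantees the integrand is continuous and its position-independent factor is differentiable in~$s$.

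In the second step I would split $u_{m_2}\widetilde{UV}_{m_1}$ using the second identity of \Cref{thm:15112017}, namely $u_{m_2}\widetilde{UV}_{m_1}=v_{m_2,m_1}-\dot{\widetilde{UV}}_{m_2,m_1}$ with $v_{m_2,m_1}$ \emph{constant}. The constant part, summed over $m_1,m_2$ and moved inside the integral, reproduces $\int_{t_0}^{t}(Y\psi)(\gamma(s))\,\mathrm{d}s$ by \cref{eq:Duncan2}. The derivative part is integrated by parts a final time, now using $\tfrac{\mathrm{d}}{\mathrm{d}s}(X_{m_2}(X_{m_1}\psi))(\gamma(s))=\sum_{m_3\in\Lambda}u_{m_3}(s)\,(X_{m_3}(X_{m_2}(X_{m_1}\psi)))(\gamma(s))$; its boundary term is the second sum \cref{eq:16112017:02:B} of $D_1\psi$, and its remaining integral is precisely $\int_{t_0}^{t}(D_2\psi)(s,\gamma(s))\,\mathrm{d}s$ from \cref{eq:16112017:02:C}. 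Collecting the four pieces yields \cref{eq:16112017:01}.

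The computation is essentially bookkeeping, so I do not expect a genuine obstacle. The only two points demanding care are, first, the regularity required for each integration by parts, which is supplied exactly by the class $C^2$, $C^1$, and $C^0$ assertions in \Cref{thm:13112017}~\ref{thm:13112017:B}; and second, the correct matching of signs and index ranges against the definitions \cref{eq:16112017:02}. All analytic content is already encapsulated in \Cref{thm:15112017} and \cref{eq:Duncan2}, so once those are invoked the argument closes by a routine (if somewhat tedious) tracking of terms.
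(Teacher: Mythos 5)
Your proposal is correct and follows essentially the same route as the paper's proof: integrate $\tfrac{\mathrm{d}}{\mathrm{d}s}\psi(\gamma(s))$, perform two integrations by parts using the antiderivative identities of \Cref{thm:15112017} (first $u_{m_1}=-\dot{\widetilde{UV}}_{m_1}$, then the splitting $u_{m_2}\widetilde{UV}_{m_1}=v_{m_2,m_1}-\dot{\widetilde{UV}}_{m_2,m_1}$), justify each step by the regularity assertions of \Cref{thm:13112017}~\ref{thm:13112017:B}, and identify the averaged term via \cref{eq:Duncan2}. The sign and boundary-term bookkeeping in your sketch matches the definitions in \cref{eq:16112017:02}, so nothing is missing.
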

\begin{proof}
When we integrate the derivative of $\psi\circ\gamma\colon\mathbb{R}\to\mathbb{R}$, we obtain
\[
\psi(\gamma(t)) \ = \ \psi(\gamma(t_0)) + \sum_{m_1\in\Lambda}\int_{t_0}^{t}u_{m_1}(s)\,(X_{m_1}\psi)(\gamma(s))\,\mathrm{d}s,
\]
because~$\gamma$ is a solution of~\cref{eq:controlAffineForm}. We know from \Cref{thm:13112017}~\ref{thm:13112017:B} that each of the Lie derivatives~$X_{m_1}\psi$ is of class~$C^2$. Thus, we can apply integration by parts, which leads to
\begin{align*}
\psi(\gamma(t)) & \ = \ \psi(\gamma(t_0)) + \sum_{m_1,m_2\in\Lambda}\int_{t_0}^{t}u_{m_2}(s)\,\widetilde{UV}_{m_1}(s)\,(X_{m_2}(X_{m_1}\psi))(\gamma(s))\,\mathrm{d}s  \\
& \qquad + \sum_{m_1\in\Lambda}\widetilde{UV}_{m_1}(t_0)\,(X_{m_1}\psi)(\gamma(t_0)) - \sum_{m_1\in\Lambda}\widetilde{UV}_{m_1}(t)\,(X_{m_1}\psi)(\gamma(t))
\end{align*}
because of \Cref{thm:15112017}. Now we add and subtract $v_{m_2,m_1}(s)\,X_{m_2}(X_{m_1}\psi)(\gamma(s))$ in each of the above integrals. Note that by \Cref{thm:13112017}~\ref{thm:13112017:B}, the Lie derivatives $X_{m_2}(X_{m_1}\psi)$ are of class~$C^1$. Thus, we can apply again integration by parts and also \Cref{thm:15112017} to obtain
\begin{align*}
\psi(\gamma(t)) & \ = \ \psi(\gamma(t_0)) + \sum_{m_1,m_2\in\Lambda}\int_{t_0}^{t}v_{m_2,m_1}(s)\,X_{m_2}(X_{m_1}\psi)(\gamma(s))\mathrm{d}s \\
& \qquad  - (D_1\psi)(t_0,\gamma(t_0)) + (D_1\psi)(t,\gamma(t)) + \int_{t_0}^{t}(D_2\psi)(s,\gamma(s))\,\mathrm{d}s,
\end{align*}
where the functions $D_1\psi,D_2\psi\colon\mathbb{R}\times\mathbb{R}^{nN}\to\mathbb{R}$ are defined as in~\cref{eq:16112017:02}. The asserted equation~\cref{eq:16112017:01} now follows immediately from~\cref{eq:Duncan2}.
\end{proof}
\begin{remark}\label{thm:remainderContributions}
By \Cref{thm:14112017}, the averaged contribution~$Y\psi$ in~\cref{eq:16112017:01} is strictly negative as long as the gradient of the global potential function~$\psi$ is nonvanishing. This term leads to the desired effect that the value of~$\psi$ decreases along trajectories of~\cref{eq:controlAffineForm} if the remainder terms $D_1\psi,D_2\psi$ in~\cref{eq:16112017:02} are sufficiently small. The terms $D_1\psi,D_2\psi$ consist of the following two contributions:
\begin{enumerate}[label=(\Alph*)]
	\item\label{thm:remainderContributions:A} The time-varying functions $\widetilde{UV}_{m_1},\widetilde{UV}_{m_2,m_1},u_{m_3}\widetilde{UV}_{m_2,m_1}$. Suppose that the frequency coefficients~$\omega_{i,k}$ are given by~\cref{eq:omegaIK} in \Cref{exm:omegaIK}. We conclude from \Cref{thm:frequencyEstimates} that these functions converge uniformly to~$0$ when the global frequency parameter~$\omega$ tends to~$\infty$.
    \item\label{thm:remainderContributions:B} The Lie derivatives $X_{m_1}\psi$, $X_{m_2}(X_{m_1}\psi)$, and $X_{m_3}(X_{m_2}(X_{m_1}\psi))$. We conclude from \Cref{thm:13112017}~\ref{thm:13112017:B} that these functions become small when the agents are close to the set~\cref{eq:setOfDesiredStates} of target formations.
\end{enumerate}
The Lie derivatives in~\ref{thm:remainderContributions:B} ensure that the remainder terms $D_1\psi,D_2\psi$ vanish sufficiently fast when the value of the global potential function~$\psi$ approaches its optimal value $0$. Roughly speaking, this is the reason why \Cref{thm:mainResult} guarantees the existence of a small $r>0$ for which the sublevel set $\psi^{-1}(\leq{r})$ is in the domain of attraction. A large global frequency parameter~$\omega$ leads to the effect that the functions in~\ref{thm:remainderContributions:A} are small. This way one can ensure that $D_1\psi,D_2\psi$ remain sufficiently small in a larger sublevel set of~$\psi$.
Thus, when we increase~$\omega$, the influence of the averaged vector field~$Y$ dominates in a larger sublevel set of~$\psi$. This effect is also observed in the numerical simulations in \Cref{sec:simulations}.
\end{remark}

\subsection{Proof of \texorpdfstring{\Cref{thm:mainResult}}{Theorem~\ref{thm:mainResult}}}\label{sec:proofOfTheorem}
Recall that system~\cref{eq:kinematicPoints} under control~\cref{eq:controlLaw} can be written as the closed-loop system~\cref{eq:controlAffineForm}. We already know from \Cref{thm:ExistenceUniqueness} that there exists a unique global solution of~\cref{eq:controlAffineForm} for any initial condition.

Since we assume that for every element~$p$ of~\cref{eq:setOfDesiredStates}, the framework~$G(p)$ is infinitesimally rigid, \Cref{thm:11112017}~\ref{thm:11112017:C} ensures that there exist $c_{\psi},r_{\psi}>0$ such that $\|\nabla\psi(p)\|^2 \geq c_{\psi}\,\psi(p)$ for every $p\in\psi^{-1}(\leq{r_{\psi}})$. Because of \Cref{thm:14112017}, it follows that there exist $c_Y>0$ and $r_Y\in(0,r_{\psi})$ such that
\begin{equation}\label{eq:proof:01}
(Y\psi)(p) \ \leq \ -c_Y\,\psi(p)^2
\end{equation}
for every $p\in\psi^{-1}(\leq{r_Y})$. Now we take a look at the constituents of the functions $D_1\psi,D_2\psi\colon\mathbb{R}\times\mathbb{R}^{nN}\to\mathbb{R}$, which are defined in~\cref{eq:16112017:02}. It can be easily deduced from their definitions that the functions~$\widetilde{UV}_{m_1}$,  $\widetilde{UV}_{m_2,m_1}$, and~$u_{m_3}$ in~\cref{eq:16112017:02} are bounded. Moreover, we know from \Cref{thm:13112017}~\ref{thm:13112017:B} that the Lie derivatives of~$\psi$ along the~$X_{m}$ are bounded by multiples of certain powers of~$\psi$ on $\psi^{-1}(\leq{r_Y})$. This implies that there exist $c_1,c_2>0$ such that
\begin{subequations}\label{eq:proof:02}
\begin{align}
|(D_1\psi)(s,p)| & \ \leq \ c_1\,\psi(p)^{3/2}, \label{eq:proof:02:A} \\
|(D_2\psi)(s,p)| & \ \leq \ c_2\,\psi(p)^{5/2} \label{eq:proof:02:B}
\end{align}
\end{subequations}
for every $s\in\mathbb{R}$ and every $p\in\psi^{-1}(\leq{r_Y})$. We apply estimates~\cref{eq:proof:01,eq:proof:02} to~\cref{eq:16112017:01}, and obtain
\begin{align*}
\psi(\gamma(t)) & \ \leq \ \psi(\gamma(t_0)) + c_1\,\psi(\gamma(t_0))^{3/2} + c_1\,\psi(\gamma(t))^{3/2} \\
& \qquad - \int_{t_0}^{t}\big(c_{Y}\psi(\gamma(s))^2-c_2\,\psi(\gamma(s))^{5/2}\big)\,\mathrm{d}s
\end{align*}
for $t_0,t\in\mathbb{R}$ with $t>t_0$ if~$\gamma$ is a trajectory of~\cref{eq:controlAffineForm} such that $\psi(\gamma(s))\leq{r_Y}$ for every $s\in[t_0,t]$. We choose $r\in(0,r_Y/2)$ sufficiently small such that $1+c_1\,(2r)^{1/2}<2(1-c_1\,(2r)^{1/2})$ and such that $c:=(c_Y-c_2\,(2r)^{1/2})/2>0$. Then, we have
\begin{equation}\label{eq:proof:03}
\psi(\gamma(t)) \ \leq \ 2\,\psi(\gamma(t_0)) - 2\,c\int_{t_0}^{t}\,\psi(\gamma(s))^2\,\mathrm{d}s
\end{equation}
for $t_0,t\in\mathbb{R}$ with $t>t_0$ if~$\gamma$ is a trajectory of~\cref{eq:controlAffineForm} such that $\psi(\gamma(s))\leq{2r}$ for every $s\in[t_0,t]$. This implies that~\cref{eq:proof:03} holds in fact for every trajectory~$\gamma$ of~\cref{eq:controlAffineForm} and all $t_0,t\in\mathbb{R}$ with $t>t_0$ if $\psi(\gamma(t_0))\leq{r}$. It is now easy to see that the integral inequality~\cref{eq:proof:03} implies the asserted estimate~\cref{eq:speedOfConvergence}.

It is left to prove that the trajectories of~\cref{eq:controlAffineForm} with initial values in $\psi^{-1}(\leq{r})$ converge to some point of~\cref{eq:setOfDesiredStates}. For this purpose, fix a trajectory~$\gamma$ of~\cref{eq:controlAffineForm} with $\psi(\gamma(t_0))\leq{r}$ for some $t_0\in\mathbb{R}$. We already know from~\cref{eq:speedOfConvergence} that $\psi(\gamma(t))\leq{2r}$ for every $t>t_0$. We write~\cref{eq:controlAffineForm} as an integral equation and then we apply integration by parts on the right-hand side. Because of \Cref{thm:15112017}, this leads to
\begin{align*}
\gamma(t_2) & \ = \ \gamma(t_1) + \sum_{m_1,m_2\in\Lambda}\int_{t_1}^{t_2}u_{m_2}(s)\,\widetilde{UV}_{m_1}(s)\,\operatorname{D}\!X_{m_1}(\gamma(s))X_{m_2}(\gamma(s))\,\mathrm{d}s  \\
& \qquad + \sum_{m_1\in\Lambda}\widetilde{UV}_{m_1}(t_1)\,X_{m_1}(\gamma(t_1)) - \sum_{m_1\in\Lambda}\widetilde{UV}_{m_1}(t_2)\,X_{m_1}\gamma(t_2)
\end{align*}
for all $t_2,t_1\geq{t_0}$. It can be easily deduced from their definitions that the functions~$u_{m_2}$ and $\widetilde{UV}_{m_1}$ are bounded. Moreover, we know from \Cref{thm:13112017}~\ref{thm:13112017:A} that the maps~$X_{m_1}$ and $(\operatorname{D}\!X_{m_1})X_{m_2}$ are bounded by multiples of~$\psi$ and~$\psi^{3/2}$ on $\psi^{-1}(\leq{2r})$, respectively. Thus, there exist constants $c',c''>0$ such that
\[
\big\|\gamma(t_2)-\gamma(t_1)\big\| \ \leq \ c'\,\psi(\gamma(t_1)) + c'\,\psi(\gamma(t_2)) + c''\int_{t_1}^{t_2}\psi(\gamma(s))^{3/2}\,\mathrm{d}s
\]
for all $t_1,t_2\in\mathbb{R}$ with $t_2\geq{t_1}\geq{t_0}$. Now we apply estimate~\cref{eq:speedOfConvergence} and obtain
\[
\big\|\gamma(t_2)-\gamma(t_1)\big\| \ \leq \ \frac{4\,\psi(p_0)}{1+c\,\psi(p_0)\,(t_1-t_0)} + c''\int_{t_1}^{t_2}\Big(\frac{2\,\psi(p_0)}{1+c\,\psi(p_0)\,(s-t_0)}\Big)^{3/2}\,\mathrm{d}s
\] 
for all $t_1,t_2\in\mathbb{R}$ with $t_2\geq{t_1}\geq{t_0}$, where $p_0:=\gamma(t_0)$. This implies that for every $\varepsilon>0$, there exists $T>t_0$ such that $\big\|\gamma(t_2)-\gamma(t_1)\big\|\leq\varepsilon$ for all $t_2\geq{t_1}\geq{T}$. It follows that~$\gamma(t)$ converges to some $p\in\mathbb{R}^{nN}$ as $t\to\infty$. Since $\psi(\gamma(t))\to0$ as $t\to\infty$, we conclude that~$p$ is an element of~\cref{eq:setOfDesiredStates}.

%--------------------------------------------------------------------------------------------------------------
%--------------------------------------------------------------------------------------------------------------
%--------------------------------------------------------------------------------------------------------------

\section{Comparison to related approaches}\label{sec:discussion}
The aim of this section is to relate our approach to other known control strategies and to indicate how it can be extended to a more general situation. For the sake of simplicity, we restrict our discussion to a \emph{control-affine system} of the form
\begin{align}
\dot{p} & \ = \ \sum_{k=1}^{\mu}u_k\,B_k(p), \label{eq:generalCAF} \\
y & \ = \ \psi(p) \label{eq:generalOutput}
\end{align}
with smooth \emph{control vector fields} $B_1,\ldots,B_{\mu}\colon\mathbb{R}^n\to\mathbb{R}^n$, and a nonnegative smooth \emph{output function} $\psi\colon\mathbb{R}^{n}\to\mathbb{R}$. System \cref{eq:generalCAF} can be steered by specifying a control law for the real-valued \emph{input channels} $u_1,\ldots,u_{\mu}$. We assume that the nonnegative function~$\psi$ attains its smallest possible value $0$ at some point of~$\mathbb{R}^{n}$, i.e., the zero set $\psi^{-1}(0)\subseteq\mathbb{R}^{n}$ is not empty. In the context of formation control, one can interpret~\cref{eq:generalCAF} as the kinematic equations~\cref{eq:kinematicPoints} of a single agent who can only measure the current value~\cref{eq:generalOutput} of its individual potential function~\cref{eq:localPotential}. The current system state~$p(t)\in\mathbb{R}^{n}$ is treated as an unknown quantity. Our aim is to find time-varying output feedback that steers the system to the set of desired states $\psi^{-1}(0)$.

There are several ways to generalize the above situation. For instance, instead of a single system, one can consider a ``team'' of control-affine systems with individual output functions on a smooth manifold. One can also include an explicit time dependence of the control vector fields or a drift vector field which satisfies suitable boundedness conditions; cf.~\cite{Suttner2018}. Moreover, by imposing the assumption that the control vector fields and the output function have suitable invariance properties (such as translational invariance), it is also possible to treat the case in which $\psi^{-1}(0)$ is not necessarily compact. Our study of the formation control problem in the previous sections indicates how this can be done (cf. \Cref{thm:nonCompactness}). Since we want to keep the discussion brief and simple, we do not address these generalizations in the following.

The task of steering a dynamical system to a minimum of its output function based on real-time measurements of the output values, is extensively studied in the literature on \emph{extremum seeking control}. The reader is referred to~\cite{AriyurBook,ScheinkerBook,ZhangBook} for an overview. We show in the following paragraphs that the control law~\cref{eq:controlLaw} can be seen as a particular implementation of a more general strategy, which is also applied in the context of extremum seeking control; see, e.g.,~\cite{Duerr2013,Duerr2014,Duerr2015,Scheinker2013,Scheinker20132,Scheinker2014}. We explain the strategy by the example of system~\cref{eq:generalCAF} with output~\cref{eq:generalOutput}. Since we want to steer the system to the set of global minima of~$\psi$ it is certainly desirable to have information about descent directions of~$\psi$. Note that for every $k\in\{1,\ldots,{\mu}\}$ and every $p\in\mathbb{R}^{n}$, the vector $-(B_k\psi)(p)\,B_k(p)$ points into such a descent direction, where $(B_k\psi)(p)$ is the Lie derivative of~$\psi$ along~$B_k$ at~$p$; cf. \Cref{sec:basics}. Thus, the control law $u_k=-(B_k\psi)(p)$ for $k=1,\ldots,\mu$ would be a promising candidate for our purpose. Since we can only measure the values of~$\psi$ but not its derivative, this control law cannot be implemented directly. However, there is a way to circumvent this obstacle. A direct computation shows that the vector field $-(B_k\psi)\,B_k$ is equal to the Lie bracket of the vector fields~$\psi{B_k}$ and $B_k$, where $\psi{B_k}\colon\mathbb{R}^n\to\mathbb{R}^n$ is given by $(\psi{B_k})(p)=\psi(p)\,B_k(p)$. Note that the vector field~$\psi{B_k}$ only depends on~$\psi$ but not its derivative. This choice of the Lie bracket, which is due to~\cite{Duerr2013}, is not the only way to get access to $-(B_k\psi)\,B_k$. Another option, which appears in~\cite{Scheinker2014}, is the Lie bracket of the vector fields $(\sin\psi)B_k$ and $(\cos\psi)B_k$. More general, choose two functions $h_1,h_2\colon\mathbb{R}\to\mathbb{R}$, which are specified later, and define vector fields $X_{m}\colon\mathbb{R}^n\to\mathbb{R}^n$ as in~\cref{eq:newVectorFields} by
\[
X_{m}(p) \ := \ h_{\nu}(\psi(p))\,B_k(p)
\]
for every pair $m=(k,\nu)$ with $k\in\{1,\ldots,\mu\}$ and $\nu\in\{1,2\}$. Note that if~$h_1,h_2$ are differentiable at $y:=\psi(p)$ for some $p\in\mathbb{R}^{n}$, then we have
\[
[X_{(k,1)},X_{(k,2)}](p) \ = \ [h_1,h_2](y)\,(B_k\psi)(p)\,B_k(p),
\]
where $[h_1,h_2](y)$ is defined by~\cref{eq:hLie}. A systematic investigation on how~$h_1,h_2$ can be chosen such that $[X_{(k,1)},X_{(k,2)}]$ equals $-(B_k\psi)B_k$ is done in~\cite{Grushkovskaya20181}. As in \Cref{sec:proof}, we denote by~$\Lambda$ the set of all tuples $(k,\nu)$ with $k\in\{1,\ldots,\mu\}$ and $\nu\in\{1,2\}$.

So far we have only rewritten certain descent directions of~$\psi$ in terms of Lie brackets. However, it is not clear yet how system~\cref{eq:generalCAF} can be steered into these directions by means of output feedback. The idea is to use a suitable approximation of Lie Brackets. For this purpose, we choose for every $m\in\Lambda$ a family $(u_m^{\omega})_{\omega>0}$ of Lebesgue measurable and bounded functions $u_m^{\omega}\colon\mathbb{R}\to\mathbb{R}$, which are specified later. For every positive real number~$\omega$, we consider system~\cref{eq:generalCAF} under the control law
\[
u_{k} \ = \ u^{\omega}_{(k,1)}(t)\,h_1(\psi(p)) + u^{\omega}_{(k,2)}(t)\,h_2(\psi(p))
\]
for $k=1,\ldots,\mu$, which leads to the closed-loop system
\[
\Sigma^{\omega}\colon\qquad\dot{p} \ = \ \sum_{m\in\Lambda}u^{\omega}_{m}(t)\,X_m(p),
\]
cf.~\cref{eq:controlAffineForm}. We can interpret each~$\Sigma^{\omega}$ as a control-affine system with control vector fields~$X_m$ and open-loop controls~$u^{\omega}_m$. It is known from~\cite{Kurzweil1987,Kurzweil19882,Kurzweil1988,Sussmann1991,Liu19972,Liu1997} that if the vector fields~$X_m$ are of class~$C^1$, and if the families $(u_m^{\omega})_{\omega>0}$ satisfy certain averaging conditions in the limit $\omega\to\infty$, then, for any fixed initial condition $(t_0,p_0)$, the trajectories of the systems~$\Sigma^{\omega}$ converge on a compact interval in the limit $\omega\to\infty$ to the trajectory of
\[
\Sigma^{\infty}\colon\qquad\dot{p} \ = \ Y(p) \ := \ \frac{1}{2}\sum_{k=1}^{\mu}[X_{(k,1)},X_{(k,2)}](p)
\]
with initial condition $(t_0,p_0)$. Note that $Y\colon\mathbb{R}^n\to\mathbb{R}^n$ corresponds to the vector field in~\cref{eq:Duncan0}. The convergence property of trajectories holds, if the functions~$h_1,h_2$ are of class~$C^1$ and if we let
\begin{align*}
u^{\omega}_{(k,1)}(t) & \ := \ \sqrt{\omega\,\Omega_k}\cos(\omega\,\Omega_k\,t + \varphi_k), \\
u^{\omega}_{(k,2)}(t) & \ := \ \sqrt{\omega\,\Omega_k}\sin(\omega\,\Omega_k\,t + \varphi_k),
\end{align*}
for $k=1,\ldots,\mu$, where $\Omega_1,\ldots,\Omega_{\mu}>0$ are pairwise distinct positive real numbers, and $\varphi_1,\ldots,\varphi_{\mu}\in\mathbb{R}$ are arbitrary. Note that we use the same trigonometric functions in \Cref{sec:controlLaw}. The averaging conditions that we mentioned earlier are indicated in \Cref{thm:frequencyEstimates,thm:15112017}. The general theory is presented in~\cite{Liu19972,Liu1997}, where the frequency parameter~$\omega$ is treated as a sequence index~$j$.

Assume that we have chosen the functions~$h_1,h_2$ in a suitable way so that the set of desired states $\psi^{-1}(0)$ is locally asymptotically stable for~$\Sigma^{\infty}$. Under suitable averaging assumptions on the families $(u_m^{\omega})_{\omega>0}$ in the limit $\omega\to\infty$ and also smoothness assumptions on the vector fields~$X_m$, it is shown in~\cite{Duerr2013} that the convergence of trajectories is in fact uniform with respect to the initial time and also uniform with respect to the initial state within compact sets. This stronger notion of convergence of trajectories ensures that the set of desired states $\psi^{-1}(0)$ becomes \emph{practically locally uniformly asymptotically stable} for~$\Sigma^{\omega}$ if~$\omega$ is chosen sufficiently large. The word \emph{uniform} refers to uniformity with respect to the time parameter. Moreover, \emph{practically} means that the trajectories of~$\Sigma^{\omega}$ are only attracted by a neighborhood of $\psi^{-1}(0)$ but not by $\psi^{-1}(0)$ itself. However, it is not known how large the frequency parameter~$\omega$ has to be chosen to ensure practical stability.

The proof of practical stability for~$\Sigma^{\omega}$ in~\cite{Duerr2013} is based on a suitable averaging analysis, which leads to a similar integral equation as~\cref{eq:16112017:01} in \Cref{thm:16112017}. This integral equation also contains the averaged vector field~$Y$ of Lie brackets and two time-varying remainder vector fields $D_1^{\omega}$ and $D_2^{\omega}$, which additionally depend on the frequency parameter $\omega>0$. When~$\omega$ tends to~$\infty$, the vector fields $D_1^{\omega},D_2^{\omega}$ vanish and only~$Y$ remains. This roughly explains why local asymptotic stability of~$\Sigma^{\infty}$ induces practical local asymptotic stability of~$\Sigma^{\omega}$ when~$\omega$ is sufficiently large. The same effect for large~$\omega$ is also discussed in \Cref{thm:remainderContributions}. Note that a large frequency parameter~$\omega$ alone only leads to practical local asymptotic stability. To obtain the full notion of local asymptotic stability for~$\Sigma^{\omega}$, it is also necessary to ensure that the remainders $D_1^{\omega},D_2^{\omega}$ vanish sufficiently fast when the system state approaches the set $\psi^{-1}(0)$ of desired states. In the present paper, we derive the corresponding boundedness properties in \Cref{sec:boundednessLemmas}. A similar approach can be found in~\cite{Grushkovskaya20181,Suttner2018}. However, the results in~\cite{Grushkovskaya20181,Suttner2018} only ensure local asymptotic stability if $\omega>0$ is sufficiently large. Our main result, \Cref{thm:mainResult}, guarantees local asymptotic stability with a possibly small domain of attraction even if the frequencies are small. The domain of attraction increases if we choose large frequencies, since this leads to smaller remainders $D_1^{\omega},D_2^{\omega}$, cf. \Cref{thm:remainderContributions}. Finally, it is worth to mention that similar results also appear in~\cite{Moreau2003,Morin1999} for the stabilization of homogeneous systems. They also rely on a combination of averaging and suitable boundedness properties of the vector fields and their derivatives.

We return to system~\cref{eq:generalCAF} with output~\cref{eq:generalOutput}. Let $h_1,h_2\colon\mathbb{R}\to\mathbb{R}$ be two functions with the properties \ref{def:hNu:i}-\ref{def:hNu:vi} in \Cref{sec:controlLaw}. Let $\omega_1,\ldots,\omega_{\mu}$ be pairwise distinct positive real constants, and let $\varphi_1,\ldots,\varphi_{\mu}\in\mathbb{R}$. For $k=1,\ldots,\mu$, define $u_{(k,1)},u_{(k,2)}\colon\mathbb{R}\to\mathbb{R}$ by
\begin{align*}
u_{(k,1)}(t) & \ := \ \sqrt{\omega_{k}}\,\cos(\omega_{k}t + \varphi_k), \\
u_{(k,2)}(t) & \ := \ \sqrt{\omega_{k}}\,\sin(\omega_{k}t + \varphi_k).
\end{align*}
Following~\cref{eq:controlLaw}, we propose the output-feedback control law
\begin{equation}\label{eq:ESC:law}
u_k \ = \ u_{k,1}(t)\,h_{1}(\psi(p)) + u_{k,1}(t)\,h_{2}(\psi(p))
\end{equation}
for $k=1,\ldots,\mu$ to steer \cref{eq:generalCAF} to a minimum of~$\psi$. We remark that an implementation of~\cref{eq:ESC:law} requires no other information than real-time measurements of the output~\cref{eq:generalOutput}.  The same argument as in the proof of \Cref{thm:12112017}~\ref{thm:12112017:A} shows that the functions $h_{\nu}\circ\psi$, with $\nu=1,2$, are of class~$C^1$. This ensures that~\cref{eq:generalCAF} under~\cref{eq:ESC:law} has a unique maximal solution for every initial condition. For every $r>0$, define the sublevel set
\[
\psi^{-1}(\leq{r}) \ := \ \{p\in\mathbb{R}^{n} \ | \ \psi(p)\leq{r}\}.
\]
Following the analysis in \Cref{sec:proof}, it is now easy to derive the following result.
\begin{theorem}\label{thm:ESC}
Assume that there exists $p^{\ast}\in\mathbb{R}^n$ such that the following conditions are satisfied:
\begin{enumerate}[label=(\roman*)]
	\item\label{thm:ESC:i} $\psi(p^{\ast})=0$ is a strict local minimum of~$\psi$ and the second derivative of~$\psi$ at~$p^{\ast}$ is positive definite;
    \item\label{thm:ESC:ii} there exists a neighborhood $W\subseteq\mathbb{R}^n$ of~$p^{\ast}$ such that for every $p\in{W}$, the vectors $B_1(p),\ldots,B_{\mu}(p)$ span~$\mathbb{R}^n$.
\end{enumerate}
Then, there exist constants $c,r>0$ such that for every $t_0\in\mathbb{R}$, and every $p_0\in\mathbb{R}^n$ in the connected component of $\psi^{-1}(\leq{r})$ containing~$p^{\ast}$, the maximal solution~$\gamma$ of system~\cref{eq:generalCAF} under the control law~\cref{eq:ESC:law} with initial condition $\gamma(t_0)=p_0$ exists on $[t_0,\infty)$, and~$\gamma(t)$ converges to~$p^{\ast}$ as $t\to\infty$ with
\begin{equation}\label{thm:ESC:eq:01}
\psi(\gamma(t)) \ \leq \ \frac{2\,\psi(p_0)}{1+c\,\psi(p_0)\,(t-t_0)}
\end{equation}
for every $t\geq{t_0}$.
\end{theorem}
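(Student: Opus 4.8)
The plan is to mirror the analysis of \Cref{sec:proof} almost verbatim, treating \cref{eq:generalCAF} under \cref{eq:ESC:law} as the control-affine system $\dot p = \sum_{m\in\Lambda} u_m(t)\,X_m(p)$ with $X_m = (h_\nu\circ\psi)\,B_k$, and replacing the two formation-specific ingredients by local counterparts. In \Cref{sec:proof} the assumption of infinitesimal rigidity was used only to obtain the lower bound $\|\nabla\psi\|^2 \geq c_3\,\psi$ on a sublevel set; here condition~\ref{thm:ESC:i} supplies exactly this bound, locally around~$p^\ast$, via \Cref{thm:10112017}~\ref{thm:10112017:B}. Likewise, the orthonormality of the frame~$B_{i,k}$ was used in \Cref{thm:14112017} to pass between $\sum_k(B_k\psi)^2$ and $\|\nabla\psi\|^2$; here condition~\ref{thm:ESC:ii} gives the one-sided inequality $\sum_k (B_k\psi)^2 = \langle\nabla\psi,(\sum_k B_kB_k^\top)\nabla\psi\rangle \geq c\,\|\nabla\psi\|^2$ on a compact neighborhood of~$p^\ast$, which is all that is needed. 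First I would fix a compact neighborhood of~$p^\ast$ on which both estimates and condition~\ref{thm:ESC:ii} hold.

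Next I would reprove the boundedness statements of \Cref{thm:12112017,thm:13112017} in this setting. These are purely local and follow from the same argument: the upper bound $\|\nabla\psi\|^2 \leq c_1\psi$ from \Cref{thm:10112017}~\ref{thm:10112017:A}, the boundedness of the higher derivatives of~$\psi$ on a compact neighborhood by smoothness, and the growth properties~\ref{def:hNu:i}--\ref{def:hNu:v} of~$h_\nu$, together yield that $X_m\psi$, $X_{m_2}(X_{m_1}\psi)$ and $X_{m_3}(X_{m_2}(X_{m_1}\psi))$ have the appropriate smoothness and are bounded by multiples of $\psi^{3/2}$, $\psi^2$ and $\psi^{5/2}$, respectively, on a sublevel set around~$p^\ast$. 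The averaging machinery then transfers without change: \Cref{thm:15112017} is a statement about the trigonometric controls alone, and the integral equation of \Cref{thm:16112017}, with averaged field $Y = \tfrac12\sum_k [X_{(k,1)},X_{(k,2)}] = \tfrac12(h\circ\psi)\sum_k (B_k\psi)\,B_k$ and the same remainders $D_1\psi,D_2\psi$, holds verbatim.

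The key estimate is the analog of \Cref{thm:14112017}. Here $Y\psi = \tfrac12(h\circ\psi)\sum_k(B_k\psi)^2$, so property~\ref{def:hNu:vi} (giving $h(y)\leq -c_h\,y$ for small~$y$), the spanning bound from~\ref{thm:ESC:ii}, and the Hessian bound from~\ref{thm:ESC:i} combine to yield $(Y\psi)(p)\leq -c_Y\,\psi(p)^2$ on a sublevel set near~$p^\ast$. Inserting this together with the remainder bounds $|D_1\psi|\leq c_1\psi^{3/2}$ and $|D_2\psi|\leq c_2\psi^{5/2}$ into the integral equation, and choosing~$r$ small exactly as in \Cref{sec:proofOfTheorem}, produces the integral inequality $\psi(\gamma(t))\leq 2\psi(\gamma(t_0)) - 2c\int_{t_0}^t \psi(\gamma(s))^2\,\mathrm{d}s$, from which \cref{thm:ESC:eq:01} follows.

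The part that genuinely differs from \Cref{sec:proof}, and the \emph{main obstacle}, is replacing convergence-to-a-set by convergence to the single point~$p^\ast$ and making precise the confinement to the connected component. I would argue as follows. Shrinking~$r$ so that the connected component~$C$ of $\psi^{-1}(\leq 2r)$ containing~$p^\ast$ lies inside the neighborhood from~\ref{thm:ESC:i}--\ref{thm:ESC:ii}, the decay estimate forces $\psi(\gamma(t))\leq 2r$ for all $t\geq t_0$, so the trajectory image, being connected and starting in~$C$, stays in~$C$; this both justifies the use of the local estimates along the whole trajectory and rules out finite escape. The Cauchy argument of \Cref{sec:proofOfTheorem} — bounding increments $\|\gamma(t_2)-\gamma(t_1)\|$ by $\psi(\gamma(t_1))$, $\psi(\gamma(t_2))$ and $\int\psi^{3/2}$ via \Cref{thm:13112017}~\ref{thm:13112017:A} and \cref{thm:ESC:eq:01} — then shows $\gamma(t)$ converges to some limit~$p$ with $\psi(p)=0$. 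Finally, because~$p^\ast$ is a strict local minimum with positive definite Hessian it is an isolated zero of~$\psi$, so for~$r$ small enough~$p^\ast$ is the only zero of~$\psi$ in~$C$; since the limit lies in $C\cap\psi^{-1}(0)$, it must equal~$p^\ast$.
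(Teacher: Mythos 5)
Your proposal is correct and takes essentially the same route the paper intends: the paper gives no separate proof of \Cref{thm:ESC} beyond the instruction to follow \Cref{sec:proof}, with condition~\ref{thm:ESC:i} supplying the lower gradient bound of \Cref{thm:11112017}~\ref{thm:11112017:C} (via \Cref{thm:10112017}~\ref{thm:10112017:B}) in place of infinitesimal rigidity, and condition~\ref{thm:ESC:ii} replacing the orthonormal-frame property used in \Cref{thm:14112017} --- exactly the substitutions you make. Your additional care with the purely local setting (shrinking $r$ so the component of $\psi^{-1}(\leq 2r)$ containing $p^{\ast}$ sits inside the neighborhood where the estimates hold, the bootstrap confining the trajectory to that component and ruling out finite escape, and identifying the limit with $p^{\ast}$ as the isolated zero) correctly fills in details the paper leaves implicit.
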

Note that the assumption of infinitesimal rigidity of the target formations in \Cref{thm:mainResult} is replaced in \Cref{thm:ESC} by assumption~\ref{thm:ESC:i}. Because of \Cref{thm:10112017}, this assumption ensures that estimate~\cref{eq:11112017:C} in \Cref{thm:11112017}~\ref{thm:11112017:C} is satisfied for the output function~$\psi$ in a neighborhood of~$p^{\ast}$. In the context of formation control, the velocity directions~$b_{i,k}$ of the agents in~\cref{eq:kinematicPoints} span the entire Euclidean space at any point. This property is locally ensured in \Cref{thm:ESC} by assumption~\ref{thm:ESC:ii}.

We remark that \Cref{thm:ESC} assumes that the set of desired states consists only of a single point~$p^{\ast}$. The result can be extended to a possibly noncompact set of desired states if the control vector fields $B_1,\ldots,B_{\mu}$ and the output function~$\psi$ have suitable invariance properties. For example, for point agents in the Euclidean space, we have invariance under the action of the Euclidean group, which reduces the set of target formations to finitely many orbits. The analysis in~\Cref{sec:proof} also indicates how \Cref{thm:ESC} can be extended to multiple control systems with individual output functions.

As explained in \Cref{thm:remainderContributions}, the magnitude $r>0$ of the sublevel  $\psi^{-1}(\leq{r})$ depends on the choice of the frequency coefficients $\omega_1,\ldots,\omega_{\mu}$. Under suitable assumptions, it is also possible to extend \Cref{thm:ESC} from a local to a semi-global stability result. For this purpose, assumption~\ref{thm:ESC:i} has to be replaced by the conditions that~$p^{\ast}$ is a strict global minimum of~$\psi$, that the second derivative of~$\psi$ at~$p^{\ast}$ is positive definite, and that~$\psi$ has no other stationary points than~$p^{\ast}$. Assumption~\ref{thm:ESC:ii} has to be replaced by the condition that the vectors $B_1(p),\ldots,B_{\mu}(p)$ span~$\mathbb{R}^n$ for every $p\in\mathbb{R}^n$. Finally, in addition to the properties \ref{def:hNu:i}-\ref{def:hNu:vi} in \Cref{sec:controlLaw}, one has to ensure that $[h_1,h_2](y)<0$ holds for every $y>0$. For instance, this is satisfied if~$h_1,h_2$ are chosen as in \Cref{exm:hNu}. Then, for every compact neighborhood~$K_0$ of~$p^{\ast}$ in~$\mathbb{R}^n$, one can find sufficiently large frequencies $\omega_1,\ldots,\omega_{\mu}$ such that~$K_0$ is uniformly asymptotically stable for system~\cref{eq:generalCAF} under the control law~\cref{eq:ESC:law} with~$K_0$ in the domain of attraction.

Finally, we compare \Cref{thm:ESC} to the results in the studies on extremum seeking control by Lie bracket approximations that we cited earlier in this section. The main advantage of \Cref{thm:ESC} is that local uniform asymptotic stability can be obtained even if the pairwise distinct frequencies $\omega_k>0$, $k=1,\ldots,\mu$, are arbitrarily small. So far, the results in the literature only ensure (practical) asymptotic stability if the frequencies~$\omega_k$ as well as their distances $|\omega_l-\omega_k|$ are chosen sufficiently large. In the context of extremum seeking, the control vector fields as well as the output function are treated as unknown quantities. Only real-time measurements of the output~\cref{eq:generalOutput} are available. For such a situation, there is no known rule how to obtain suitable values for the~$\omega_k$. \Cref{thm:ESC} solves this problem in the sense that local uniform asymptotic stability is obtained independent of the choice of the~$\omega_k$. As explained in the previous paragraph, it is also possible to obtain semi-global uniform asymptotic stability for system~\cref{eq:generalCAF} under the control law~\cref{eq:ESC:law}. Unlike many other similar approaches, control law~\cref{eq:ESC:law} can lead to convergence to~$p^{\ast}$ and not only to convergence to an unknown neighborhood of~$p^{\ast}$. Another advantage compared to other studies is the flexibility in the choice of the frequencies. We do not assume that the~$\omega_k$ are rational multiples of each other. It already suffices that they are pairwise distinct.

%--------------------------------------------------------------------------------------------------------------
%--------------------------------------------------------------------------------------------------------------
%--------------------------------------------------------------------------------------------------------------

\section{Conclusions and future work}\label{sec:conclusions}
We have shown that distance measurements provide enough information to locally stabilize infinitesimally rigid target formations in the Euclidean space of arbitrary dimension. The proposed control law is distributed, and its implementation requires only the currently sensed distances. Certainly, a disadvantage compared to the well-established gradient-based control law is the relatively small domain of attraction for small frequency coefficients. On the other hand, our feedback law can lead to a closed-loop system without undesired equilibria. A promising direction for future research might be a suitable superposition of both control laws. This, perhaps, could lead to global asymptotic stability. There are several other potential applications for the proposed control strategy in the field of multi-agent systems. Many distributed coordination algorithms involve potential functions of inter-agent distances such as distributed navigation~\cite{Leonard2001}, swarming~\cite{Dimarogonas2008} and flocking~\cite{Olfati2006}. The implementation is usually derived from a distributed gradient vector field of a potential function, which often requires relative position measurements. Our approach can also be applied to these coordination control tasks, and allows an implementation if only distance measurements are available.

\bibliographystyle{abbrv}
\bibliography{bibFile}
\end{document}